\def\namedlabel#1#2{\begingroup
#2%
\def\@currentlabel{#2}%
\phantomsection\label{#1}\endgroup
}
\tikzstyle{decision} = [diamond, draw, fill=blue!20, 
\tikzstyle{block} = [rectangle, draw, fill=none, 
\tikzstyle{line} = [draw, -latex']
\tikzstyle{cloud} = [draw, ellipse,fill=red!20, node distance=3cm,
\newcounter{dir_est_ineq_cnt}
\newcounter{K_rho_unknown}
\newcolumntype{L}[1]{>{\raggedright\arraybackslash}p{#1}}
\newcolumntype{C}[1]{>{\centering\arraybackslash}p{#1}}
\newcolumntype{R}[1]{>{\raggedleft\arraybackslash}p{#1}}
\def\cw#1{#1}
\def\an#1{#1}
\newtheorem{lem}{Lemma}
\newtheorem{thm}{Theorem}
\def\argmin{\mathop{\rm \text{arg \hspace{-0.5mm}min}}}
\def\bx{\bm{x}}
\def\bz{\bm{z}}
\def\bw{\bm{w}}
\def\btheta{\bm{\theta}}
\def\fRV{\bm{\eta}}
\def\xSp{\mathcal{X}}
\newcommand{\inprod}[2]{\left\langle #1 , #2 \right\rangle }
\newcommand{\ipm}[3]{ \gamma_{#1}(#2,#3) }
\def\numIter{K}
\def\iterIndex{k}
\def\lossFunc{\ell}
\newcommand{\sfunc}[3]{\hat{f}_{#3}\left(#1,#2\right)}
\newcommand{\sgrad}[3]{\bm{g}_{#3}\left(#1,#2\right)}
\newcommand{\shess}[3]{\bm{g}^{(2)}_{#3}\left(#1,#2\right)}
\def\meangap{mean criterion}
\def\MeanGap{Mean Criterion}
\def\meangap{excess risk}
\def\MeanGap{Excess Risk}
\newcommand{\myfiguresize}{3.45 in}
\newcommand{\myfiguresize}{5 in}
\title{Adaptive Sequential Stochastic Optimization}
\author{
  Craig Wilson\thanks{This work was supported by the US National Science Foundation under award CCF 1111342 and NSF DMS 1312907, and by the US Army Research Laboratory under cooperative agreement W911NF-17-2-0196,  through the University of Illinois at Urbana-Champaign. Parts of this work were presented at CDC 2014 \cite{Wilson2014} and ICASSP2016 \cite{Wilson2015a}.}\\
  Google\\
  wilson60@illinois.edu
  \and
  Venugopal Veeravalli\\
  University of Illinois at Urbana-Champaign\\
  vvv@illinois.edu
  \and
  Angelia Nedi\'c\\
  Arizona State University\\
  Angelia.Nedich@asu.edu
}
\date{}
\begin{document}

\maketitle

\begin{abstract}
	A framework is introduced for sequentially solving convex stochastic minimization problems, where the objective functions change slowly, in the sense that the distance between successive minimizers is bounded. 
	The minimization problems are solved by sequentially applying a selected optimization algorithm, such as stochastic gradient descent (SGD), based on drawing a number of samples in order to carry the iterations. 
	Two tracking criteria are introduced to evaluate approximate minimizer quality: one based on being accurate with respect to the mean trajectory, and the other based on being accurate in high probability (IHP). An estimate of a bound on the minimizers' change, combined with properties of the chosen optimization algorithm, is used to select the number of samples needed to meet the desired tracking criterion. A technique to estimate the change in minimizers is provided along with analysis to show that eventually the estimate upper bounds the change in minimizers. This estimate of the change in minimizers provides sample size selection rules that guarantee that the tracking criterion is met for sufficiently large number of time steps. Simulations are used to confirm that the estimation approach provides the desired tracking accuracy in practice, while being efficient in terms of number of samples used in each time step.
\end{abstract}


\section{Introduction}

Problems involving optimizing a sequence of functions that slowly vary over time naturally arise in many different contexts including channel estimation, parameter tracking, and sequential learning. To describe and analyze such problems, we consider solving a sequence of stochastic convex optimization problems
\begin{equation}
\label{prob_form:f_def}
\min_{x \in \xSp} \left\{  f_{n}(\bx) \triangleq \mathbb{E}_{\bz_{n}} \left[ \lossFunc_{n}(\bx,\bz_{n})  \right]  \right\}
\end{equation}
with $\ell_{n}$ being an appropriate loss function, $\bz_{n}$ representing the randomness in the loss at time $n$, and $\bx \in \xSp \subset \mathbb{R}^{d}$ being a nonempty, closed and convex set. 
	We will assume that problem~\eqref{prob_form:f_def} has a unique solution, denoted by  $\bx_{n}^{*}$, at every instance $n$, i.e.,
	\[
	\bx_{n}^{*} = \argmin_{\bx \in \mathcal{X}} f_{n}(\bx)\qquad \hbox{for all }n\ge 1.
	\]
To capture the idea that the sequence of functions in \eqref{prob_form:f_def} is changing slowly, we assume
that there is a bound $\rho>0$ on the optimal solutions of the form:
\begin{equation}
\label{prob_form:opt_change_L2}
\|\bx_{n+1}^{*}-\bx_{n}^{*}\|  \leq \rho
\end{equation}
where $\|\bx\|$ is the Euclidean norm. Rather than using a Markov chain model or other Bayesian models for the changes in 
$\{\bx_{n}^{*}\}_{n=1}^{\infty}$, we only use the bound \eqref{prob_form:opt_change_L2} on $\rho$ in our analysis.

Given a sequence of slowly varying functions $\{f_{n}(\bx)\}_{n=1}^{\infty}$, we want to efficiently, sequentially minimize each of the functions to within a desired accuracy. We look at solving this problem by applying an optimization algorithm that uses $\numIter_{n}$ samples of $\bz_{n}$ such as 
stochastic gradient descent (SGD). 
%
%
We want to understand the trade-off between the solution accuracy and the complexity, represented by the number of samples $\numIter_{n}$. In effect, 
We want to understand how many samples $\{\bz_{n}(\iterIndex)\}_{\iterIndex=1}^{\numIter_{n}}$ are necessary to achieve a desired level of accuracy.

We introduce two different types of tracking criteria to characterize approximate minimizers of \eqref{prob_form:f_def}, denoted $\bx_{n}$ for each $n$. First, we define a \emph{mean tracking criterion}
\begin{equation}
\label{prob_form:L2_crit}
\mathbb{E}\left[ f_{n}(\bx_{n})\right] - f_{n}(\bx_{n}^{*})  \leq \epsilon
\end{equation}
and second, we define an \emph{in high probability (IHP) tracking criterion}
\begin{equation}
\label{prob_form:IHP_crit}
\mathbb{P}\left\{  f_{n}(\bx_{n}) - f_{n}(\bx_{n}^{*}) > t  \right\} \leq r
\end{equation}
with the expectation and probability taken over the samples $\{\bz_{n}(\iterIndex)\}_{\iterIndex=1}^{\numIter_{n}}$.

The remainder of this paper is organized as follows. In Section~\ref{probForm}, we introduce our problem. In Section~\ref{tracking_crit_rho_known}, we study the problem of selecting the number of samples $\numIter_{n}$ to achieve \cw{the mean criterion in \eqref{prob_form:L2_crit}}. We find a relationship between $\numIter_{n}$ and $\epsilon$ for the mean tracking criterion with the change in the minimizers, \cw{$\rho$ in \eqref{prob_form:opt_change_L2}}, known. This relationships allows us to select $\numIter_{n}$ in order to satisfy the mean criterion \cw{for sufficiently large $n$}. In Section~\ref{tracking_est_rho}, we introduce an estimate for the change in the minimizers, $\rho$, from \eqref{prob_form:opt_change_L2}. We provide theoretical guarantees that the introduced estimate eventually upper bounds the change in the minimizers. In Section~\ref{withRhoUnknown}, we combine the $\rho$ estimate of Section~\ref{tracking_est_rho} with the analysis of the case with $\rho$ known in Section~\ref{tracking_crit_rho_known} to provide rules to select $\numIter_{n}$ in order to meet the desired tracking criterion. We provide guarantees that for $n$ large enough, we meet our desired tracking criterion almost surely. Finally, we carry out simulation experiments to test our $\rho$ estimation and $\numIter_{n}$ selection rules.

\subsection{Related Work} 

There has been some work on similar problems, but general optimization theory tools to deal with time-varying optimization problems under \eqref{prob_form:opt_change_L2} have yet to be developed. 

In \cite{Zhu16}, the authors independently studied a time-varying optimization problem similar to ours. They imposed a bound on the change in the minimizers as in \eqref{prob_form:opt_change_L2} and studied SGD with a constant step size to develop finite-sample bounds suitable for guaranteeing that the mean criterion is satisfied when $\rho$ is known. The significant difference between our paper and the work in   \cite{Zhu16} is that we consider the case where the change in minimizers in unknown, and we develop a more general framework to handle any optimization algorithm that fits our assumptions.

Another relevant approach is {\em online optimization} in which a sequence of functions arrive, and in general no knowledge is available about the incoming functions other than that all the functions come from a specified class of functions, i.e., linear or convex functions with uniformly bounded gradients. 
Online optimization models do not include the notion of a desired tracking accuracy at each time instant such as \eqref{prob_form:L2_crit} and \eqref{prob_form:IHP_crit}. Instead, only bounds on the worst case performance of the best estimators are investigated through regret formulations~\cite{Cesa2006,Duchi2011,Duchi2009,Hazan2007,Bartlett2008,Shwartz2009,Shwartz2006,Shwartz2007,Xiao2010,Zinkevich2003}. 

For the problem of online optimization, the idea of controlling the variation of the sequence of functions has been studied in~\cite{RakhlinSridharan2012} and~\cite{Yang2012}. In~\cite{Yang2012}, regret is minimized subject to a bound, say $G_{b}$, on the total variation of the gradients over a time interval $T$ of interest, i.e.,
\begin{equation}
\label{intro:yang_var}
\sum_{n=2}^{T} \max_{\bx \in \xSp} \| \nabla f_{n}(\bx) -  \nabla f_{n-1}(\bx) \|^{2} \leq G_{b}.
\end{equation}
If all the functions $\{f_{n}(x)\}$ are strongly convex with the same parameter $m$, then by the optimality conditions (see Theorem 2F.10 in~\cite{Dontchev2009}) relation~\eqref{intro:yang_var} implies that
\[
\sum_{n=2}^{T} \|\bx_{n}^{*} - \bx_{n-1}^{*}\|^{2} \leq \tilde{G}_{b}
\]
with $\tilde{G}_{b}$ a function of $G_{b}$. Therefore, the work in~\cite{Yang2012} can be seen as studying the regret while controlling the total variation in the optimal solutions over $T$ time instants. In contrast, we control the variation of the optimal solutions at each time instant with \eqref{prob_form:opt_change_L2} and then seek to maintain a tracking criterion such as \eqref{prob_form:L2_crit} and \eqref{prob_form:IHP_crit} at each time instant.

Additionally, there is other work that has some of the ingredients of our proposed problem formulation. In~\cite{TYS2010}, a sequence of quadratic functions is considered and treated within the domain of estimation theory; however, the authors only examine the Least Mean Squares (LMS) algorithm (corresponding to $\numIter_n=1$ for all $n$). 
The work in~\cite{YO2005,SYO2006} considers a sequence $\{f_n\}$ of convex objective functions converging to some limit function $f$, where all the functions $f_n$ have the same set of possible minima. However, aside from considering time-varying objective functions, these works have nothing else in common with the work described here. There has also been work 
in~\cite{Kushner1994} considering the limit as the rate of change of the functions goes to zero and for the least means squares (LMS) algorithm in~\cite{Solo1995}. The results in~\cite{Kushner1994} and~\cite{Solo1995} both require a Bayesian model for the changes in the function sequence, which we do not require. 

If we have a quadratic loss centered at $\bx_{n}^{*}$ and a linear state space evolution for the optimal solution $\bx_{n}^{*}$, then we could apply the Kalman filter \cite{Sayed2008}. If the function we seek to optimize is non-linear, another approach we can consider under a Bayesian framework is particle filtering \cite{Doucet2009}. For particle filtering, it is harder to provide exact guarantees on performance similar to those given in \eqref{prob_form:L2_crit} and \eqref{prob_form:IHP_crit}.

To conclude, there are no existing approaches within optimization theory or estimation theory that allow us to solve a sequence of time-varying problems, subject to abiding to a pre-specified tracking error criterion such as \eqref{prob_form:L2_crit} or \eqref{prob_form:IHP_crit} under only \eqref{prob_form:opt_change_L2}. In this work, we fill in this gap and provide methods to solve such problems.

\section{Problem Formulation}
\label{probForm}

\subsection{Assumptions}
We make several assumptions to proceed. First, let $\xSp$ be closed and convex with $\text{diam}(\xSp) < + \infty$. Define the $\sigma$-algebra
\begin{equation}
\label{withRhoUnknown:FSigAlg}
\mathcal{F}_{i} \triangleq \sigma\left(  \left\{ \bz_{j}(\iterIndex): \;  j = 1, \ldots, i; \; \iterIndex=1, \ldots, \numIter_{j} \right\} \right)
\end{equation}
which is the smallest $\sigma$-algebra such that the random variables in the set $\left\{ \bz_{j}(\iterIndex): \;  j = 1, \ldots, i; \;\iterIndex=1, \ldots, \numIter_{j} \right\}$ are measurable. By convention $\mathcal{F}_{0}$ is the trivial $\sigma$-algebra.

We suppose that the following conditions hold:
\begin{description}
	\item\namedlabel{probState:assump1}{A.1}
	For each $n$, $f_{n}(\bx)$ is twice continuously differentiable with respect to $\bx$.
	\item\namedlabel{probState:assump2}{A.2} 
	For each $n$, $f_{n}(\bx)$ is strongly convex with a parameter $m>0$, i.e.,
	\begin{equation}
	\label{prob_form:strong_convex_cond}
	f_{n}(\tilde{\bx}) \geq f_{n}(\bx) + \inprod{\nabla_{\bx}f_{n}(\bx)}{\tilde{\bx} - \bx} + \frac{1}{2}m \| \tilde{\bx} - \bx \|^{2}.
	\end{equation}
	where $\inprod{\bx}{\tilde{\bx}}$ is the Euclidean inner product between $\bx$ and $\tilde{\bx}$.
	\item\namedlabel{probState:assump3}{A.3}  For each $n$, we can draw stochastic gradients $\sgrad{\bx}{\bz_{n}}{n}$ such that the following holds:
	\begin{equation}
\label{prob_form:stochGradDef}
\mathbb{E} [\sgrad{\bx}{\bz_{n}}{n} ] = \nabla f_{n}(\bx)
\qquad\hbox{for all\ } \bx.
\end{equation}
	
	\item\namedlabel{probState:assump4}{A.4} Given an optimization algorithm that generates an approximate minimizer $\bx_{n}$ 	using $\numIter_{n}$ samples $\{\bz_{n}(\iterIndex)\}_{\iterIndex=1}^{\numIter_{n}}$, there exists a function $b(d_{0},\numIter_{n})$ such that the following conditions hold:
	\begin{enumerate}
		\item If $\numIter_{n}$ and $d_{0}$ are both $\mathcal{F}_{n-1}$-measurable random variables, it holds that
		\iftoggle{useTwoColumn}{
		\begin{align}
		&\| \bx_{n-1} - \bx_{n}^{*}\|^{2} \leq d_{0}^{2} \nonumber \\
		\label{probState:bBoundRand}
		&\;\;\;\;\;\;\; \Rightarrow \mathbb{E}[f_{n}(\bx_{n}) \;|\; \mathcal{F}_{n-1}] - f_{n}(\bx_{n}^{*}) \leq b(d_{0},\numIter_{n}).
		\end{align}
		}{
		\begin{equation}
		\label{probState:bBoundRand}
		\| \bx_{n-1} - \bx_{n}^{*}\|^{2} \leq d_{0}^{2}  \;\;\Rightarrow\;\; \mathbb{E}[f_{n}(\bx_{n}) \;|\; \mathcal{F}_{n-1}] - f_{n}(\bx_{n}^{*}) \leq b(d_{0},\numIter_{n}).
		\end{equation}
		}
		\item If $\tilde{\numIter}_{n}$ and $\gamma$ are constants, it holds that
		\iftoggle{useTwoColumn}{
		\begin{align}
		&\mathbb{E}\| \bx_{n-1} - \bx_{n}^{*}\|^{2} \leq \gamma^{2} \nonumber \\
		\label{probState:bBoundDeterm}
		&\;\;\;\;\;\;\; \Rightarrow \mathbb{E}[f_{n}(\bx_{n})] - f_{n}(\bx_{n}^{*}) \leq b(\gamma,\tilde{\numIter}_{n}).
		\end{align}
		}{
		\begin{equation}
		\label{probState:bBoundDeterm}
		\| \bx_{n-1} - \bx_{n}^{*}\|^{2} \leq \gamma^{2} \;\;\Rightarrow\;\; \mathbb{E}[f_{n}(\bx_{n}) \;|\; \mathcal{F}_{n-1}] - f_{n}(\bx_{n}^{*}) \leq b(\gamma,\tilde{\numIter}_{n}).
		\end{equation}
		}
		\item The bound $b(d_{0},\numIter_{n})$ is non-decreasing in $d_{0}$ and non-increasing in $\numIter_{n}$.
	\end{enumerate} 
	\item\namedlabel{probState:assump5}{A.5} There exist constants $A,B \geq 0$ such that
	\begin{equation}
	\label{prob_form:L2_ngrad}
	\mathbb{E}\left[ \| \sgrad{\bx}{\bz_{n}}{n} \|^{2} \;|\; \mathcal{F}_{n-1} \right] \leq A + B \|\bx - \bx_{n}^{*}\|^{2}
	\end{equation}
	\item\namedlabel{probState:assump6}{A.6} Initial approximate minimizers $\bx_{1}$ and $\bx_{2}$ satisfy 
	\[
	f_{i}(\bx_{i}) - f_{i}(\bx_{i}^{*}) \leq \epsilon_{i} \;\;\;\;\; i=1,2
	\]
	with $\epsilon_{1}$ and $\epsilon_{2}$ known.
\end{description}

\cw{For Assumption~\ref{probState:assump2}, an example of a strongly convex function is a quadratic $f(\bx)=\frac{1}{2}\bx^{\top}\bm{Q}\bx$ where the smallest eigenvalue of $\bm{Q}$ satisfies $\lambda_{\textrm{min}}(\bm{Q}) \geq m$.}

For Assumption~\ref{probState:assump4}, we consider
SGD
\begin{eqnarray}
\label{prob_form:sa_alg}
\bx_{n}(\iterIndex) \!\!\! &=& \!\!\! \Pi_{\xSp}[\bx_{n}(\iterIndex - 1) - \mu_{n}(\iterIndex) \sgrad{\bx}{\bz_{n}(\iterIndex)}{n}]  \\
\bx_{n}(0) \!\!\! &\triangleq& \!\!\! \bx_{n-1} \nonumber
\end{eqnarray}
with \cw{$\iterIndex=1,\ldots,\numIter_{n}$, and $\Pi_{\xSp}$ denoting projection on to the set $\xSp$.}  We choose $\bx_{n}$ as a convex combination of the iterates $\{\bx_{n}(\iterIndex)\}_{\iterIndex=0}^{\numIter_{n}}$ generated by SGD
\[
\bx_{n} = \sum_{\iterIndex = 0}^{\numIter_{n}} \lambda_{n}(\iterIndex) \bx_{n}(\iterIndex).
\]
One simple choice is setting $\bx_{n} = \bx_{n}(\numIter_{n})$, which corresponds to setting $\lambda_{n}(\numIter_{n}) = 1$ and $\lambda_{n}(0) = \cdots = \lambda_{n}(\numIter_{n}-1) = 0$.

\cw{Section~\ref{bBounds} discusses several applicable bounds $b(d_{0},\numIter)$ for SGD and choices of convex combinations $\{\lambda_{n}(\iterIndex)\}$. We need \eqref{probState:bBoundRand} to handle the case when $\rho$ must be estimated, and \eqref{probState:bBoundDeterm} of Assumption~\ref{probState:assump4} to handle the case when $\rho$ is known. In fact, if the bound $b(d_{0},\numIter_{n})$ factors as
\begin{equation}
\label{probState:factorAssump}
b(d_{0},\numIter_{n}) = \alpha(\numIter_{n})d_{0}^{2} + \beta(\numIter_{n})
\end{equation}
then \eqref{probState:bBoundRand} implies \eqref{probState:bBoundDeterm} as well. To see this set $d_{0}=\| \bx_{n-1} - \bx_{n}^{*}\|$ and suppose that $\mathbb{E}[d_{0}^{2}] \leq \gamma^{2}$. The bound in \eqref{probState:bBoundRand} implies that 
\[
\mathbb{E}[f_{n}(\bx_{n}) ] - f_{n}(\bx_{n}^{*}) \leq \mathbb{E}_{d_{0},\numIter_{n}}[b(d_{0},\tilde{\numIter}_{n})].
\]
Applying \eqref{probState:factorAssump} yields
\begin{eqnarray}
\mathbb{E}_{d_{0},\numIter_{n}}[b(d_{0},\tilde{\numIter}_{n})] &=& \mathbb{E}[\alpha(\tilde{\numIter}_{n}) d_{0}^{2} + \beta(\tilde{\numIter}_{n})] \nonumber \\
&=& \alpha(\tilde{\numIter}_{n}) \mathbb{E}[d_{0}^{2}] + \beta(\tilde{\numIter}_{n}) \nonumber \\
&=& b\left( \sqrt{\mathbb{E}[d_{0}^{2}]}, \tilde{\numIter}_{n}  \right) \nonumber \\
&\leq& b(\gamma, \tilde{\numIter}_{n}). \nonumber
\end{eqnarray}
} 

In practice, we may not know the parameters such as the strong convexity parameter $m$ from Assumption~\ref{probState:assump2} and the gradient parameters $A$ and $B$ from Assumption~\ref{probState:assump5}. \cw{Section~\ref{parameterEstimation} introduces several techniques to estimate these parameters using the stochastic gradients in Assumption~\ref{probState:assump3}.}

In our assumptions, we condition on the $\sigma$-algebra $\mathcal{F}_{n-1}$, since this captures all of the information available at the beginning of time $n$. In later sections, we will select $\numIter_{n}$ as a function of the samples $\{\bz_{i}(\iterIndex)\}_{\iterIndex=1}^{\numIter_{i}}$ for $i=1,\ldots,n-1$. This implies that $\numIter_{n}$ is $\mathcal{F}_{n-1}$ measurable. In this case, where $\numIter_{n}$ is itself a random variable, Assumption~\ref{probState:assump4} is crucial to our analysis.

Finally, for Assumption ~\ref{probState:assump6}, we generally must select $\numIter_{1}$ and $\numIter_{2}$ blindly in the sense that we have no information about $\rho$ defined in \eqref{prob_form:opt_change_L2}. We can only make a choice such as
\[
\numIter_{i} = \min\left\{ \numIter \geq 1 \;\big|\; b\left( \text{diam}(\xSp) , \numIter  \right) \leq \epsilon \right\} \;\;\;\; i=1,2
\]
or fixed initial choices for $\numIter_{1}$ and $\numIter_{2}$. Regardless of our choice of $\numIter_{1}$ and $\numIter_{2}$, we can set $\epsilon_{i} \triangleq b\left( \text{diam}(\xSp) , \numIter  \right)$ for $i=1,2$. In order to have $\epsilon_{i} \leq \epsilon$ for $i=1,2$, we may need to draw significantly more samples up front to find points $\bx_{1}$ and $\bx$ due to using $\text{diam}(\xSp)$.

\subsection{Constructing a Bound On the Change in Minimizers}

We look at the justification behind our choice of controlling the change in functions through the minimizers $\bx_{n}^{*}$ by showing that several other reasonable ways to control how the functions change can be reduced to a bound on the change in minimizers. In Section~\ref{tracking_crit_rho_known}, we show that bounds on the change in the minimizer can be used to select the number of samples $\numIter_{n}$.

\subsubsection{Change in $f$}
Suppose that we instead bound the change in the optimal function values, in the following manner:
\[
f_{n}(\bx_{n-1}^{*}) - f_{n}(\bx_{n}^{*}) \leq \tilde{\rho}.
\]
This bounds the loss incurred as a result of using the minimizer of the previous function $f_{n-1}$ as an approximate minimizer of the current function $f_{n}$. 
By the strong convexity Assumption~\ref{probState:assump2}, it holds that
\begin{equation}
\| \bx_{n}^{*} - \bx_{n-1}^{*} \| \leq \sqrt{\frac{2}{m} \left( f_{n}(\bx_{n-1}^{*}) - f_{n}(\bx_{n}^{*}) \right)} \leq \sqrt{\frac{2}{m} \tilde{\rho}}. \nonumber
\end{equation}
Therefore, a bound on the optimal function values can be translated into a bound on the change in the minimizers.

\subsubsection{Change in Distribution}
For machine learning problems, we can generally write our functions as an expectation of a loss function $\lossFunc(\bx,\bz)$, i.e., 
\[
f_{n}(\bx) = \mathbb{E}_{\bz_{n} \sim p_{n}}[\lossFunc(\bx,\bz_{n})].
\]
Therefore, the source of change in this problem is the model distributions $p_{n}$. 
We can control the change by making an assumption on how $p_{n}$ changes through an appropriate probability metric or pseudo-metric. Given a class of functions $\mathcal{F}$ mapping from $\mathscr{Z} \to \mathbb{R}$, an integral probability metric\cite{Mueller1997} between two distributions $p$ and $q$ on $\mathcal{Z}$ is defined as
\[
\ipm{\mathcal{F}}{p}{q} \triangleq \sup_{h \in \mathcal{F}} | \mathbb{E}_{\bz \sim p}[h(\bz)] - \mathbb{E}_{\tilde{\bz} \sim q}[h(\tilde{\bz})]|.
\]
The following lemma shows that under an inclusion condition on the loss function $\lossFunc(\bx,\bz)$, the
integral probability metric bounds can lead to bounds on the change in minimizers.
\begin{lem} 
	If the class $\left\{ \lossFunc(\bx,\cdot) \;|\; \bx \in \xSp  \right\} \subset \mathcal{F}$
	of loss functions is such that \\$\ipm{\mathcal{F}}{p_{n}}{p_{n-1}} \leq \tilde{\rho}$ for all $n\ge1$, then it holds that
	\[
	\| \bx_{n}^{*} - \bx_{n-1}^{*} \| \leq \sqrt{ \frac{2}{m} \tilde{\rho}}\qquad\hbox{for all }n\ge1.
	\]
\end{lem}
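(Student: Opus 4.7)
The plan is to reduce this to a uniform closeness bound on the objective functions and then exploit strong convexity of both $f_{n}$ and $f_{n-1}$ simultaneously. Since $\ell(\bx,\cdot)\in\mathcal{F}$ for every $\bx\in\xSp$, the definition of the integral probability metric immediately yields, for each fixed $\bx$,
\[
|f_{n}(\bx)-f_{n-1}(\bx)|=\bigl|\mathbb{E}_{\bz\sim p_{n}}[\ell(\bx,\bz)]-\mathbb{E}_{\bz\sim p_{n-1}}[\ell(\bx,\bz)]\bigr|\le\ipm{\mathcal{F}}{p_{n}}{p_{n-1}}\le\tilde{\rho}.
\]
Applying this at $\bx=\bx_{n-1}^{*}$ and $\bx=\bx_{n}^{*}$ gives two inequalities that I will feed into a symmetric strong-convexity argument.

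Next I would write down the two strong convexity inequalities from Assumption~\ref{probState:assump2}, one for $f_{n}$ anchored at $\bx_{n}^{*}$ (using $\nabla f_{n}(\bx_{n}^{*})=0$ or the first order optimality condition on $\xSp$, whichever is needed) and one for $f_{n-1}$ anchored at $\bx_{n-1}^{*}$:
\[
f_{n}(\bx_{n-1}^{*})-f_{n}(\bx_{n}^{*})\ge\tfrac{m}{2}\|\bx_{n-1}^{*}-\bx_{n}^{*}\|^{2},\qquad f_{n-1}(\bx_{n}^{*})-f_{n-1}(\bx_{n-1}^{*})\ge\tfrac{m}{2}\|\bx_{n-1}^{*}-\bx_{n}^{*}\|^{2}.
\]
Summing these and regrouping produces the telescoping combination
\[
[f_{n}(\bx_{n-1}^{*})-f_{n-1}(\bx_{n-1}^{*})]+[f_{n-1}(\bx_{n}^{*})-f_{n}(\bx_{n}^{*})]\ge m\|\bx_{n-1}^{*}-\bx_{n}^{*}\|^{2},
\]
so that bounding each bracket by $\tilde{\rho}$ via the IPM inequality and solving for the norm yields $\|\bx_{n}^{*}-\bx_{n-1}^{*}\|\le\sqrt{2\tilde{\rho}/m}$.

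The only subtle point (and the one I would be careful with) is obtaining the sharp constant $\sqrt{2\tilde{\rho}/m}$ rather than the weaker $\sqrt{4\tilde{\rho}/m}$ one gets by naively feeding the uniform closeness into the earlier ``change in $f$'' subsection. The trick is precisely that one applies strong convexity of both $f_{n}$ and $f_{n-1}$ and sums, so that the two IPM slack terms share the $m\|\bx_{n-1}^{*}-\bx_{n}^{*}\|^{2}$ budget instead of each having to absorb it alone. Everything else (measurability, optimality, constraint handling on $\xSp$) is routine given the assumptions already in force.
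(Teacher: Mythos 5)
Your proposal is correct and follows essentially the same route as the paper's own proof: strong convexity of both $f_{n}$ and $f_{n-1}$ anchored at their respective minimizers (with first-order optimality handling the constrained case), summing, regrouping into the two telescoping brackets, and bounding each bracket by $\ipm{\mathcal{F}}{p_{n}}{p_{n-1}} \leq \tilde{\rho}$. Your remark about why this symmetric argument yields the sharp constant $\sqrt{2\tilde{\rho}/m}$ rather than $\sqrt{4\tilde{\rho}/m}$ is exactly the point of the lemma's construction.
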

\begin{proof}
	Applying the strong convexity Assumption~\ref{probState:assump2} to $f_{n}(\bx)$ and $f_{n-1}(\bx)$, for the solutions
		$\bx_{n}^{*}$ and $\bx_{n-1}^{*}$, we obtain
	\begin{eqnarray}
	f_{n}(\bx_{n-1}^{*}) &\geq& f_{n}(\bx_{n}^{*}) + \frac{1}{2} m \| \bx_{n}^{*} - \bx_{n-1}^{*}\|^{2} \nonumber \\
	f_{n-1}(\bx_{n}^{*}) &\geq& f_{n-1}(\bx_{n-1}^{*}) + \frac{1}{2} m \| \bx_{n}^{*} - \bx_{n-1}^{*}\|^{2}. \nonumber
	\end{eqnarray}
	By adding these two inequalities and rearranging, it holds that
	\iftoggle{useTwoColumn}{
	\begin{align}
	m& \| \bx_{n}^{*} - \bx_{n-1}^{*} \|^{2} \nonumber \\
	&\leq \left( f_{n}(\bx_{n-1}^{*}) - f_{n}(\bx_{n}^{*}) \right) + \left( f_{n-1}(\bx_{n}^{*}) - f_{n-1}(\bx_{n-1}^{*}) \right) \nonumber \\
	&= \left( f_{n}(\bx_{n-1}^{*}) - f_{n-1}(\bx_{n-1}^{*}) \right) + \left( f_{n-1}(\bx_{n}^{*}) - f_{n}(\bx_{n}^{*}) \right). \nonumber
	\end{align}
	}{
	\begin{eqnarray}
	m \| \bx_{n}^{*} - \bx_{n-1}^{*} \|^{2} &\leq& \left( f_{n}(\bx_{n-1}^{*}) - f_{n}(\bx_{n}^{*}) \right) + \left( f_{n-1}(\bx_{n}^{*}) - f_{n-1}(\bx_{n-1}^{*}) \right) \nonumber \\
	&=& \left( f_{n}(\bx_{n-1}^{*}) - f_{n-1}(\bx_{n-1}^{*}) \right) + \left( f_{n-1}(\bx_{n}^{*}) - f_{n}(\bx_{n}^{*}) \right). \nonumber
	\end{eqnarray}
	}
Now, examine the term $f_{n}(\bx_{n-1}^{*}) - f_{n-1}(\bx_{n-1}^{*})$.  
	We have
	\iftoggle{useTwoColumn}{
	\begin{align}
	f_{n}&(\bx_{n-1}^{*}) - f_{n-1}(\bx_{n-1}^{*}) \nonumber \\
	&\leq | \mathbb{E}_{\bz_{n} \sim p_{n}} \left[  \lossFunc(\bx_{n-1}^{*},\bz_{n}) \right] - \mathbb{E}_{\bz_{n-1} \sim p_{n-1}} \left[  \lossFunc(\bx_{n-1}^{*},\bz_{n-1}) \right] | \nonumber \\
	&\leq \sup_{\ell \in \mathcal{F}} | \mathbb{E}_{\bz_{n} \sim p_{n}} \left[ \lossFunc(\bx_{n-1}^{*},\bz_{n}) \right] - \mathbb{E}_{\bz_{n-1} \sim p_{n-1}} \left[  \lossFunc(\bx_{n-1}^{*},\bz_{n-1}) \right] | \nonumber \\
	&= \ipm{\mathcal{F}}{p_{n}}{p_{n-1}}. \nonumber
	\end{align}
	}{
	\begin{eqnarray}
	f_{n}(\bx_{n-1}^{*}) - f_{n-1}(\bx_{n-1}^{*}) &\leq& | \mathbb{E}_{\bz_{n} \sim p_{n}} \left[  \lossFunc(\bx_{n-1}^{*},\bz_{n}) \right] - \mathbb{E}_{\bz_{n-1} \sim p_{n-1}} \left[  \lossFunc(\bx_{n-1}^{*},\bz_{n-1}) \right] | \nonumber \\
	&\leq& \sup_{f\ell \in \mathcal{F}} | \mathbb{E}_{\bz_{n} \sim p_{n}} \left[ \lossFunc(\bx_{n-1}^{*},\bz_{n}) \right] - \mathbb{E}_{\bz_{n-1} \sim p_{n-1}} \left[  \lossFunc(\bx_{n-1}^{*},\bz_{n-1}) \right] | \nonumber \\
	&=& \ipm{\mathcal{F}}{p_{n}}{p_{n-1}}. \nonumber
	\end{eqnarray}
	}
Similarly, we can see that the same estimate holds for the term $f_{n-1}(\bx_{n}^{*}) - f_{n}(\bx_{n}^{*}) $.
Therefore, it holds that
\begin{equation*}
\| \bx_{n}^{*} - \bx_{n-1}^{*} \| \leq \sqrt{ \frac{2}{m} \ipm{\mathcal{F}}{p_{n}}{p_{n-1}}} \leq \sqrt{ \frac{2}{m} \tilde{\rho}}.
\end{equation*}
\end{proof}
Thus, we see that we can translate a bound on the change in distributions through an integral probability metric
to a bound on the change in minimizers.

\subsubsection{Parameterized Functions}
Finally, we examine the case in which the functions $\{f_{n}(\bx)\}$ come from a parameterized class of functions $f(\bx,\theta)$, i.e.,
\[
f_{n}(\bx) = f(\bx,\btheta_{n}).
\]
Furthermore, we assume that the parameters themselves change slowly
\[
\| \btheta_{n} - \btheta_{n-1} \| \leq \delta.
\]
With appropriate assumptions on the function $f(\bx,\btheta)$, we can apply the implicit function theorem \cite{Rudin1964} to yield a bound of the form
\[
\| \bx_{n+1}^{*} - \bx_{n}^{*}\| \leq G \| \btheta_{n+1} - \btheta_{n}\| \leq G \delta
\]
for an appropriately chosen $G$.

\section{Tracking Analysis with Change in Minimizers Known}
\label{tracking_crit_rho_known}

In this section we combine the bound $b(d_{0},\numIter)$ in assumption~\ref{probState:assump4} with our model for the changes in functions in \cref{prob_form:opt_change_L2} to choose the number of stochastic gradients $\numIter$ needed to achieve desired mean criterion $\epsilon$ in \cref{prob_form:L2_crit}. The IHP criterion in \cref{prob_form:IHP_crit} is analyzed\iftoggle{useArxiv}{}{in an extended version of this paper \cite{Wilson2016a}} in \Cref{ihp_track_error_anaysis}. In this section, we assume that $\rho$ is known. In \Cref{withRhoUnknown}, we will consider the case when $\rho$ is unknown.

\subsection{Mean Criterion Analysis}
\label{tracking_crit_rho_known:meanAnalysis}

We show how to choose $\numIter$ to achieve a target mean criterion $\epsilon$ for all $n$. The idea behind the analysis is to proceed by induction using Assumption~\ref{probState:assump6} as a base case. Suppose that 
\[
\mathbb{E}[f_{n-1}(\bx_{n-1})] - f_{n-1}(\bx_{n-1}^{*}) \leq \epsilon.
\]
Denote the distance from the initial point $\bx_{n-1}$ to the minimizer $\bx_{n}^{*}$ by $d_{n}(0)$, i.e.,
\begin{equation}
\label{withRhoKnown:dn0Def}
d_{n}^{2}(0) = \| \bx_{n-1} - \bx_{n}^{*}\|^{2} ~\text{(almost surely)}.
\end{equation}
To bound $\mathbb{E}[d_{n}^{2}(0)]$ we first use the triangle inequality and $\rho$ from \eqref{prob_form:opt_change_L2} to get
\begin{eqnarray}
\sqrt{\mathbb{E}[d_{n}^{2}(0)]}  &\leq&  \|\bx_{n-1} - \bx_{n-1}^{*}\|_{L_{2}} + \|\bx_{n-1}^{*} - \bx_{n}^{*}\|_{L_{2}} \nonumber \\
&\leq&  \|\bx_{n-1} - \bx_{n-1}^{*}\|_{L_{2}} + \rho. \nonumber
\end{eqnarray}
where $\|\bx\|_{L_{2}} = \sqrt{\mathbb{E}\|\bx\|^{2}}$ is the $L_{2}$-norm.
By the strong convexity Assumption~\ref{probState:assump2}, we have
\begin{equation}
\label{analysis:strong_L2_to_f}
\frac{m}{2}\|\bx_{n-1} - \bx_{n-1}^{*}\|^{2} \leq f_{n}(\bx_{n-1}) - f_{n}(\bx_{n-1}^{*})
\end{equation}
yielding
\begin{equation}
\mathbb{E}\|\bx_{n-1} - \bx_{n-1}^{*}\|^{2} \leq \frac{2}{m} \left( \mathbb{E}[f_{n}(\bx_{n-1})] - f_{n}(\bx_{n-1}^{*})\right) \leq \frac{2 \epsilon}{m}. \nonumber
\end{equation}
Putting everything together we have
\begin{equation}
\label{sol_sens:L2_d0}
\mathbb{E}[d_{n}^{2}(0)] \leq \left(  \sqrt{\frac{2\epsilon}{m}} + \rho \right)^{2}
\end{equation}
according to Assumption~\ref{probState:assump4}.

Therefore, we have the bound
\[
\mathbb{E}[f_{n}(\bx_{n})] - f_{n}(\bx_{n}^{*}) \leq b\left(   \sqrt{\frac{2\epsilon}{m}} + \rho,\numIter \right)
\]
and we can set
\begin{equation}
\label{K_with_rho_known}
\numIter^{*} = \min\left\{ \numIter \geq 1 \;\Bigg|\; b\left(   \sqrt{\frac{2\epsilon}{m}} + \rho,\numIter \right) \leq \epsilon \right\}
\end{equation}
to ensure that
\[
\mathbb{E}[f_{n}(\bx_{n})] - f_{n}(\bx_{n}^{*}) \leq \epsilon \;\;\;\; \forall n \geq 1.
\]

\subsection{IHP Tracking Error Analysis}
\label{ihp_track_error_anaysis}

For the IHP criterion, we assume that assumptions \ref{probState:assump1}-\ref{probState:assump6} hold. We seek an upper bound $r(t,\numIter)$ such that
\begin{equation}
\label{ihp_f_ub}
\mathbb{P} \left\{ f_{n}(\bx_{n}) - f_{n}(\bx_{n}^{*}) > t \right\} \leq r(t,\numIter) \;\;\; \forall n \geq 1.
\end{equation}
Using the mean criterion bounds of the previous section, we know that for all $n$
\[
\mathbb{E}[f_{n}(\bx_{n})] - f_{n}(\bx_{n}^{*}) \leq \epsilon.
\]
Then by Markov's inequality, it holds that
\begin{equation}
\label{withRhoKnown:ihpMarkovBound}
\mathbb{P} \left\{ f_{n}(\bx_{n}) - f_{n}(\bx_{n}^{*}) > t \right\} \leq \frac{\epsilon}{t}.
\end{equation}
Although this bound always holds, we look at a way to tighten this bound. As before, we proceed by induction. As a base case, we can set
\[
\mathbb{P}\left\{ f_{1}(\bx_{1}) - f_{1}(\bx_{1}^{*}) > t \right\} \leq \frac{\epsilon}{t}.
\]
Now, suppose that
\[
\mathbb{P}\left\{ f_{n-1}(\bx_{n-1}) - f_{n-1}(\bx_{n-1}^{*}) > t \right\} \leq r_{n-1}(t)
\]
and we want to construct a bound $r_{n}(t)$ on $\mathbb{P}\left\{ f_{n}(\bx_{n}) - f_{n}(\bx_{n}^{*}) > t \right\}$. We proceed by conditioning on $\{d_{n}(0) \leq \delta\}$ and $\{d_{n}(0) > \delta\}$ with $d_{n}(0)$ defined in \cref{withRhoKnown:dn0Def} using the law of total probability:
\iftoggle{useTwoColumn}{
\begin{align}
\mathbb{P}& \left\{ f_{n}(\bx_{n}) - f_{n}(\bx_{n}^{*}) > t \right\} \nonumber \\
&= \mathbb{P} \left\{ f_{n}(\bx_{n}) - f_{n}(\bx_{n}^{*}) > t \;|\; d_{n}(0) \leq \delta \right\} \mathbb{P}\left\{ d_{n}(0) \leq \delta \right\} \nonumber \\
&\;\;\;\;\;\;\;\;\;\; + \mathbb{P} \left\{ f_{n}(\bx_{n}) - f_{n}(\bx_{n}^{*}) > t \;|\; d_{n}(0) > \delta \right\} \mathbb{P}\left\{ d_{n}(0) > \delta \right\}. \nonumber
\end{align}
}{
\begin{eqnarray}
\mathbb{P} \left\{ f_{n}(\bx_{n}) - f_{n}(\bx_{n}^{*}) > t \right\} &=& \mathbb{P} \left\{ f_{n}(\bx_{n}) - f_{n}(\bx_{n}^{*}) > t \;|\; d_{n}(0) \leq \delta \right\} \mathbb{P}\left\{ d_{n}(0) \leq \delta \right\} \nonumber \\
&&\;\;\;\;\;\;\;\;\;\; + \mathbb{P} \left\{ f_{n}(\bx_{n}) - f_{n}(\bx_{n}^{*}) > t \;|\; d_{n}(0) > \delta \right\} \mathbb{P}\left\{ d_{n}(0) > \delta \right\}. \nonumber
\end{eqnarray}
}
For the first term, it holds that
\begin{equation}
\label{withRhoKnown:ihpDn0Bounded}
\mathbb{P} \left\{ f_{n}(\bx_{n}) - f_{n}(\bx_{n}^{*}) > t \;|\; d_{n}(0) \leq \delta \right\} \leq \frac{1}{t} b(\delta,\numIter) \triangleq \psi(t,\delta)
\end{equation}
and
\[
\mathbb{P}\left\{ d_{n}(0) \leq \delta \right\} \leq 1.
\]
For the second term, it holds that 
\[
\mathbb{P} \left\{ f_{n}(\bx_{n}) - f_{n}(\bx_{n}^{*}) > t \;|\; d_{n}(0) > \delta \right\} \leq \psi(t,\text{diam}(\xSp))
\]
and
\iftoggle{useTwoColumn}{
\begin{align}
\mathbb{P}&\left\{ d_{n}(0) > \delta \right\} \nonumber \\
&= \mathbb{P}\left\{ \| \bx_{n-1} - \bx_{n}^{*}\|_{2}^{2} > \delta \right\}  \nonumber \\
&= \mathbb{P}\left\{ \| \bx_{n-1} - \bx_{n}^{*}\|_{2} > \sqrt{\delta} \right\} \nonumber \\
&\leq \mathbb{P}\left\{ \| \bx_{n-1} - \bx_{n-1}^{*}\|_{2} + \rho > \sqrt{\delta} \right\} \nonumber \\
&\leq \mathbb{P}\left\{ \| \bx_{n-1} - \bx_{n-1}^{*}\|_{2} > \left( \sqrt{\delta} - \rho \right)_{+} \right\} \nonumber \\
&\leq \mathbb{P}\left\{ \sqrt{f_{n-1}(\bx_{n-1}) - f_{n-1}(\bx_{n-1}^{*})} > \sqrt{\frac{2}{m}}\left( \sqrt{\delta} - \rho \right)_{+} \right\} \nonumber \\
&\leq r_{n-1}\left( \frac{2}{m}\left( \sqrt{\delta} - \rho \right)_{+}^{2} \right) \nonumber 
\end{align}
}{
\begin{eqnarray}
\mathbb{P}\left\{ d_{n}(0) > \delta \right\} &=& \mathbb{P}\left\{ \| \bx_{n-1} - \bx_{n}^{*}\|_{2}^{2} > \delta \right\}  \nonumber \\
&=& \mathbb{P}\left\{ \| \bx_{n-1} - \bx_{n}^{*}\|_{2} > \sqrt{\delta} \right\} \nonumber \\
&\leq& \mathbb{P}\left\{ \| \bx_{n-1} - \bx_{n-1}^{*}\|_{2} + \rho > \sqrt{\delta} \right\} \nonumber \\
&\leq& \mathbb{P}\left\{ \| \bx_{n-1} - \bx_{n-1}^{*}\|_{2} > \left( \sqrt{\delta} - \rho \right)_{+} \right\} \nonumber \\
&\leq& \mathbb{P}\left\{ \sqrt{f_{n-1}(\bx_{n-1}) - f_{n-1}(\bx_{n-1}^{*})} > \sqrt{\frac{2}{m}}\left( \sqrt{\delta} - \rho \right)_{+} \right\} \nonumber \\
&\leq& r_{n-1}\left( \frac{2}{m}\left( \sqrt{\delta} - \rho \right)_{+}^{2} \right) \nonumber 
\end{eqnarray}
}
where $(x)_{+} = \max\{x,0\}$. Combining these bounds yields an overall bound
\iftoggle{useTwoColumn}{
\begin{align}
\mathbb{P}& \left\{ f_{n}(\bx_{n}) - f_{n}(\bx_{n}^{*}) > t \right\} \nonumber \\
&\qquad \leq \psi(t,\delta) + \psi(t,\text{diam}(\xSp)) r_{n-1}\left( \frac{2}{m}\left( \sqrt{\delta} - \rho \right)_{+}^{2}  \right). \nonumber
\end{align}
}{
\begin{eqnarray}
\mathbb{P} \left\{ f_{n}(\bx_{n}) - f_{n}(\bx_{n}^{*}) > t \right\} &\leq& \psi(t,\delta) + \psi(t,\text{diam}(\xSp)) r_{n-1}\left( \frac{2}{m}\left( \sqrt{\delta} - \rho \right)_{+}^{2}  \right). \nonumber
\end{eqnarray}
}

We can optimize this bound over $\delta$ to yield the bound
\iftoggle{useTwoColumn}{
\begin{align}
\label{withRhoKnown:optExactIHPBound}
\mathbb{P} &\left\{ f_{n}(\bx_{n}) - f_{n}(\bx_{n}^{*}) > t \right\} \nonumber \\
&\qquad \leq \inf_{0 < \delta \leq \text{diam}(\xSp)} \left\{ \psi(t,\delta) \right. \nonumber \\
&\left. \qquad \qquad \qquad + \psi(t,\text{diam}(\xSp)) r_{n-1}\left( \frac{2}{m}\left( \sqrt{\delta} - \rho \right)_{+}^{2}  \right) \right\}.
\end{align}
}{
\begin{align}
\label{withRhoKnown:optExactIHPBound}
\mathbb{P} &\left\{ f_{n}(\bx_{n}) - f_{n}(\bx_{n}^{*}) > t \right\} \nonumber \\
&\;\;\;\;\;\;\;\;\;\; \leq \inf_{0 < \delta \leq \text{diam}(\xSp)} \left\{ \psi(t,\delta) + \psi(t,\text{diam}(\xSp)) r_{n-1}\left( \frac{2}{m}\left( \sqrt{\delta} - \rho \right)_{+}^{2}  \right) \right\}.
\end{align}
}
The quantity $\psi(t,\delta)$ defined in \cref{withRhoKnown:ihpDn0Bounded} can be replaced by any bound that also satisfies the inequality in \cref{withRhoKnown:ihpDn0Bounded}. Therefore, we can set
\iftoggle{useTwoColumn}{
\begin{align}
r_{n}(t) &= \inf_{0 < \delta \leq \text{diam}(\xSp)} \left\{ \psi(t,\delta) \right. \nonumber \\
\label{withRhoKnown:ihpExactBound}
&\left. \qquad \qquad + \psi(t,\text{diam}(\xSp)) r\left( \frac{2}{m}\left( \sqrt{\delta} - \rho \right)_{+}^{2}  \right) \right\} 
\end{align}
}{
\begin{align}
\label{withRhoKnown:ihpExactBound}
r_{n}(t) = \inf_{0 < \delta \leq \text{diam}(\xSp)} \left\{ \psi(t,\delta) + \psi(t,\text{diam}(\xSp)) r\left( \frac{2}{m}\left( \sqrt{\delta} - \rho \right)_{+}^{2}  \right) \right\} 
\end{align}
}
resulting in $\mathbb{P}\left\{ f_{n}(\bx_{n}) - f_{n}(\bx_{n}^{*}) > t \right\} \leq r_{n}(t)$. In \Cref{ihpFiniteBound}, we develop an approach to compute the IHP bound. With this approach, we can set $\numIter_{n}$ to achieve a desired IHP criterion $\epsilon$ and $r$.

\subsubsection{Bound at a Finite Number of Points}
\label{ihpFiniteBound}

The bound of the preceding section is exact but difficult to compute. In this section, we introduce a computationally simpler bound. Computing the entire sequence of functions $r_{n}(t)$ is generally difficult, so we look at bounding
\[
\mathbb{P}\left\{ f_{n}(\bx_{n}) - f_{n}(\bx_{n}^{*}) > t \right\}
\]
at a finite number of points $t^{(1)},\ldots,t^{(N)}$ ordered in increasing order. We want to compute bound\\ $r_{n}(1),\ldots,r_{n}(N)$ such that
\[
\mathbb{P}\left\{ f_{n}(\bx_{n}) - f_{n}(\bx_{n}^{*}) > t^{(i)} \right\} \leq r_{n}(i) \;\;\;\; i=1,\ldots,N.
\] 
We define an initial bound
\[
r_{1}(i) = \frac{\epsilon}{t^{(i)}}.
\]
Suppose that
\[
\mathbb{P}\left\{ f_{n-1}(\bx_{n-1}) - f_{n-1}(\bx_{n-1}^{*}) > t^{(i)} \right\} \leq r_{n-1}(i).
\]
Then as in \cref{withRhoKnown:ihpExactBound}, it follows that
\iftoggle{useTwoColumn}{
	\begin{align}
	&\mathbb{P}\left\{ f_{n}(\bx_{n}) - f_{n}(\bx_{n}^{*}) > t^{(i)} \right\} \nonumber \\
	&\; \leq \inf_{0 < \delta \leq \text{diam}(\xSp)} \left\{ \psi(t^{(i)},\delta) + \psi(t^{(i)},\text{diam}(\xSp)) \mathbb{P}\{d_{n}(0) > \delta \} \right\}.
	\end{align}
}{
\begin{align}
\mathbb{P}\left\{ f_{n}(\bx_{n}) - f_{n}(\bx_{n}^{*}) > t^{(i)} \right\} \leq \inf_{0 < \delta \leq \text{diam}(\xSp)} \left\{ \psi(t^{(i)},\delta) + \psi(t^{(i)},\text{diam}(\xSp)) \mathbb{P}\{d_{n}(0) > \delta \} \right\}.
\end{align}	
}
The key then is to bound $\mathbb{P}\{d_{n}(0) > \delta \}$ in terms of $\{r_{n-1}(i)\}_{i=1}^{N}$. Define the function
\[
t(\delta) = \max\{t^{(i)} \;|\; t^{(i)} \leq \delta  \}
\]
to be point $t^{(i)}$ closest to $\delta$ but not greater. Provided that $\frac{2}{m}\left(\sqrt{\delta} - \rho\right)_{+}^{2} \geq t^{(1)}$ and $t^{(i)} = t\left(\frac{2}{m}\left(\sqrt{\delta} - \rho\right)_{+}^{2}\right)$ it holds that
\iftoggle{useTwoColumn}{
	\begin{align}
	\mathbb{P}&\left\{ d_{n}(0) > \delta \right\} \nonumber \\
	&\leq \mathbb{P}\left\{ f_{n-1}(\bx_{n-1}) - f_{n-1}(\bx_{n-1}^{*}) > \frac{2}{m}\left(\sqrt{\delta} - \rho\right)_{+}^{2} \right\} \nonumber \\
	&\leq \mathbb{P}\left\{ f_{n-1}(\bx_{n-1}) - f_{n-1}(\bx_{n-1}^{*}) > t^{(i)} \right\} \nonumber \\
	&\leq r_{n-1}(i).
	\end{align}
}{
\begin{eqnarray}
\mathbb{P}\left\{ d_{n}(0) > \delta \right\} &\leq& \mathbb{P}\left\{ f_{n-1}(\bx_{n-1}) - f_{n-1}(\bx_{n-1}^{*}) > \frac{2}{m}\left(\sqrt{\delta} - \rho\right)_{+}^{2} \right\} \nonumber \\
&\leq& \mathbb{P}\left\{ f_{n-1}(\bx_{n-1}) - f_{n-1}(\bx_{n-1}^{*}) > t^{(i)} \right\} \nonumber \\
&\leq& r_{n-1}(i).
\end{eqnarray}
}
Otherwise, if $\frac{2}{m}\left(\sqrt{\delta} - \rho\right)_{+}^{2} < t^{(1)}$, then
\[
\mathbb{P}\left\{ d_{n}(0) > \delta \right\} \leq \frac{\epsilon}{\frac{2}{m}\left(\sqrt{\delta} - \rho \right)_{+}}.
\]
Define the overall bound for the term $\mathbb{P}\left\{ d_{n}(0) > \delta \right\}$ as follows:
\begin{equation}
\phi_{n}(\delta) \triangleq \begin{cases}
r_{n-1}\left(t\left( \frac{2}{m} \left( \sqrt{\delta} - \rho \right)_{+}^{2}  \right)\right), & t^{(1)} \leq \frac{2}{m} \left( \sqrt{\delta} - \rho \right)_{+}^{2} \\
\frac{\epsilon}{\frac{2}{m} \left( \sqrt{\delta} - \rho \right)_{+}^{2}}, & \text{else}
\end{cases}.
\end{equation}
Then we can set
\[
r_{n}^{(i)} = \inf_{\delta > 0}\left\{ \psi(t^{(i)},\delta) + \psi(t^{(i)},\text{diam}(\xSp)) \phi_{n}(\delta) \right\}.
\]
This algorithm is summarized in Algorithm~\cref{ihp_alg}. In practice, once the bound $\psi(t^{(i)},\text{diam}(\xSp))$ is less than one, then the gains are significant. \figurename{}~\cref{fig:ihp_alg_plot} plots a comparison of the bound produced by Algorithm~\cref{ihp_alg} against the Markov inequality bound from \cref{withRhoKnown:ihpMarkovBound} applied to the problem in \Cref{experiment} with $\numIter_{n} = 300$.
\begin{algorithm}
	\caption{Calculate IHP bounds}
	\label{ihp_alg}
	\begin{algorithmic} 
		\REQUIRE Points $t^{(1)},\ldots,t^{(N)}$
		\STATE 1. Set
		\[
		r_{1}^{(i)} = \frac{\epsilon}{t^{(i)}} \;\;\;\; i=1,\ldots,N.
		\]
		\STATE 2. Compute
		\[
		r_{n}^{(i)} = \min_{0 < \delta \leq \text{diam}(\xSp)} \left\{ \psi(t^{(i)},\delta) + \psi(t^{(i)},\text{diam}(\xSp)) \phi_{n}(\delta) \right\}
		\]
		for $i=1,\ldots,N$.
		\STATE 3. $n\leftarrow n+1$ and go back to step 2.
	\end{algorithmic}
\end{algorithm}

\begin{figure}[!ht]
	\centering
	\includegraphics[width=\myfiguresize]{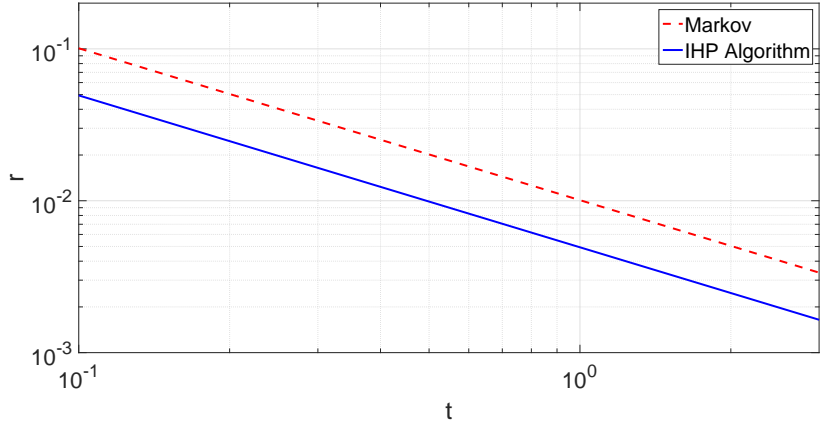}
	\caption{IHP Algorithm Plot}
	\label{fig:ihp_alg_plot} \medskip
\end{figure}

Either the Markov bound of \cref{withRhoKnown:ihpMarkovBound} or Algorithm~\cref{ihp_alg} will produce valid upper bounds of the form
\[
\mathbb{P}\left\{ f_{n}(\bx_{n}) - f_{n}(\bx_{n}^{*}) > t^{(i)} \right\} \leq r_{n}(i) \;\;\;\; i=1,\ldots,N.
\]
Suppose that the set $\{t^{(1)},\ldots,t^{(N)}\}$ contains $t$ at index $i^{*}$. These bounds can in turn be used to select $\numIter_{n}$ to achieve a target $(t,r)$ pair by selecting the smallest $\numIter_{n}$ such that
\[
r_{n}(i^{*}) \leq r.
\]

\section{Estimating the Change in Minimizers}
\label{tracking_est_rho}

In practice, we do not know $\rho$, so we must construct an estimate $\hat{\rho}_{n}$ using the samples $\{z_{i}(k)\}_{\iterIndex=1}^{\numIter_{i}}$, $i=1,\ldots, n$. First, we construct estimates $\tilde{\rho}_{i}$ for the one step changes $\|\bx_{i}^{*} - \bx_{i-1}^{*}\|_{2}$ for each $i$. Next, we combine the one step estimates to construct an overall estimate $\hat{\rho}_{n}$ for $\rho$. As an intermediary step, we also look at a special case in which either
\begin{equation}
\label{prob_form:opt_change_eq}
\|\bx_{n+1}^{*}-\bx_{n}^{*}\|_{2} = \rho
\end{equation}
or
\begin{equation}
\label{prob_form:opt_change_eq_L2}
\|\bx_{n+1}^{*}-\bx_{n}^{*}\|_{L_{2}} = \rho.
\end{equation}
We show that for our estimate $\hat{\rho}_{n}$ and appropriately chosen sequences $\{t_{n}\}$, for all $n$ large enough, $\hat{\rho}_{n} + t_{n} \geq \rho$ almost surely. With this property, analysis similar to that in \Cref{tracking_crit_rho_known} holds.

\subsection{One Step Changes}
We construct an estimate $\tilde{\rho}_{i}$ for the one-step changes $\|\bx_{i}^{*} - \bx_{i-1}^{*}\|$. As a consequence of the strong convexity of $f_{i}(\bx)$, we have the following lemma
\begin{lem}
	\label{estRho:dirEstJustLemma}
	It holds that
	$\| \bx - \bx_{i}^{*} \|_{2} \leq \frac{1}{m} \| \nabla f_{i}(\bx) \|_{2} \;\;\;\;\; \forall i \geq 1,
	\an{\quad \forall \bx\in\xSp }$.
\end{lem}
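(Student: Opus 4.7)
The plan is to derive the inequality from strong convexity by pairing the quadratic lower bound against itself at $\bx$ and $\bx_i^*$, and then applying the Cauchy-Schwarz inequality. Because the problem is constrained ($\bx\in\xSp$), the key subtlety is that $\nabla f_i(\bx_i^*)$ need not vanish; instead, the first-order optimality condition at the minimizer gives $\langle \nabla f_i(\bx_i^*), \bx - \bx_i^*\rangle \ge 0$ for every $\bx \in \xSp$, which is what replaces the unconstrained identity $\nabla f_i(\bx_i^*)=0$.

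Concretely, I would first apply the strong convexity inequality in Assumption~\ref{probState:assump2} twice: once with the roles of $\tilde\bx$ and $\bx$ taken as $\bx$ and $\bx_i^*$, and once with them swapped. Summing the two yields
\[
m\|\bx - \bx_i^*\|^2 \;\le\; \langle \nabla f_i(\bx) - \nabla f_i(\bx_i^*),\, \bx - \bx_i^*\rangle.
\]
Next, I would invoke the first-order optimality condition for the constrained minimizer $\bx_i^* = \argmin_{\bx \in \xSp} f_i(\bx)$, namely $\langle \nabla f_i(\bx_i^*), \bx - \bx_i^*\rangle \ge 0$ for all $\bx \in \xSp$, to drop the $\nabla f_i(\bx_i^*)$ term and obtain
\[
m\|\bx - \bx_i^*\|^2 \;\le\; \langle \nabla f_i(\bx),\, \bx - \bx_i^*\rangle.
\]

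Finally, I would apply the Cauchy--Schwarz inequality to the right-hand side, yielding $m\|\bx - \bx_i^*\|^2 \le \|\nabla f_i(\bx)\|\cdot\|\bx - \bx_i^*\|$, and then divide by $\|\bx - \bx_i^*\|$ (the trivial case $\bx = \bx_i^*$ being handled separately) to conclude $\|\bx - \bx_i^*\| \le \frac{1}{m}\|\nabla f_i(\bx)\|$. I do not anticipate any real obstacle here; the only point worth care is remembering the constrained first-order condition rather than blindly setting the gradient at the optimum to zero, since the lemma is stated for $\bx \in \xSp$ and $\xSp$ may have active boundary at $\bx_i^*$.
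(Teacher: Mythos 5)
Your proposal is correct and follows essentially the same route as the paper's proof: strong monotonicity of the gradient (which you derive by summing the two strong convexity inequalities rather than citing it, as the paper does), the constrained first-order optimality condition $\inprod{\nabla f_{i}(\bx_{i}^{*})}{\bx - \bx_{i}^{*}} \geq 0$ to discard the gradient at the minimizer, and the Cauchy--Schwarz inequality to finish. Your explicit handling of the trivial case $\bx = \bx_{i}^{*}$ and your emphasis on the constrained optimality condition are careful touches but do not change the argument.
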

\begin{proof}
Since our functions $f_{i}(\bx)$ are convex, it holds that
$\inprod{\nabla f_{i}(\bx_{i}^{*})}{\bx - \bx_{i}^{*}} \geq 0,  \forall \bx\in\xSp$.
By the strong \an{monotonicity} of the gradient, a consequence of strong convexity \cite{Boyd2004}, it holds that
$\inprod{\nabla f_{i}(\bx)-\nabla f_{i}(\tilde{\bx})}{\bx - \tilde{\bx}} \geq m \| \bx - \tilde{\bx} \|_{2}^{2} \forall \bx, \tilde{\bx}\in\xSp$. Plugging in \an{$\tilde{\bx}=\bx_{i}^{*}$} yields
\[
\inprod{\nabla f_{i}(\an{\bx}) }{\bx - \bx_{i}^{*}} \geq m \| \bx - \bx_{i}^{*}\|_{2}^{2}.
\]
Applying \an{the Cauchy-Schwarz inequality} yields the result.
\end{proof}
This in turn by way of the triangle inequality proves that
\iftoggle{useTwoColumn}{
\begin{align}
&\| \bx_{i}^{*} - \bx_{i-1}^{*} \|_{2} \nonumber \\
&\;\;\; \leq \|\bx_{i} - \bx_{i-1} \|_{2} + \|\bx_{i} - \bx_{i}^{*} \|_{2}+ \|\bx_{i-1} - \bx_{i-1}^{*} \|_{2} \nonumber \\
&\;\;\; \leq \label{dir_est_deriv} \|\bx_{i} - \bx_{i-1} \|_{2} + \frac{1}{m}\| \nabla f_{i}(\bx_{i}) \|_{2} + \frac{1}{m}\| \nabla f_{i-1}(\bx_{i-1}) \|_{2}.
\end{align}
}{
\begin{eqnarray}
\| \bx_{i}^{*} - \bx_{i-1}^{*} \|_{2} &\leq& \|\bx_{i} - \bx_{i-1} \|_{2} + \|\bx_{i} - \bx_{i}^{*} \|_{2}+ \|\bx_{i-1} - \bx_{i-1}^{*} \|_{2} \nonumber \\
&\leq& \label{dir_est_deriv} \|\bx_{i} - \bx_{i-1} \|_{2} + \frac{1}{m}\| \nabla f_{i}(\bx_{i}) \|_{2} + \frac{1}{m}\| \nabla f_{i-1}(\bx_{i-1}) \|_{2}.
\end{eqnarray}
}
Motivated by this bound, we define the following estimate denoted the \emph{direct estimate} by approximating the gradients
\begin{equation}
\label{dir_est_def}
\tilde{\rho}_{i} \triangleq \|\bx_{i} - \bx_{i-1} \|_{2} + \frac{1}{m}\| \hat{G}_{i} \|_{2} + \frac{1}{m}\| \hat{G}_{i-1} \|_{2}
\end{equation}
where
\[
\hat{G}_{i} = \frac{1}{\numIter_{i}} \sum_{\iterIndex=1}^{\numIter_{i}} \sgrad{\bx_{i}}{\bz_{i}(\iterIndex)}{i}.
\]

\subsection{Combining with Constant Change of Minimizers}
As a special case, we look at combining the one step estimates when either \cref{prob_form:opt_change_eq} or \cref{prob_form:opt_change_eq_L2} holds.
\subsubsection{Euclidean Norm Condition}
Under \cref{prob_form:opt_change_eq}, we construct an estimate by averaging the one step estimates
\begin{equation}
\label{estRho:combEq:euclid}
\hat{\rho}_{n} \triangleq \frac{1}{n-1} \sum_{i=2}^{n} \tilde{\rho}_{i}.
\end{equation}
We want to show that for an appropriate sequence $\{t_{n}\}$, described in \cref{rho_conc_eq} and \cref{rho_conc_ineq} below, and for all $n$ large enough $\hat{\rho}_{n} + t_{n} \geq \rho$ almost surely under \cref{prob_form:opt_change_eq} or \cref{prob_form:opt_change_eq_L2}. The difficulty in actually proving this condition for \cref{dir_est_def} is that when we compute
\[
\hat{G}_{i} = \frac{1}{\numIter_{i}} \sum_{\iterIndex=1}^{\numIter_{i}} \sgrad{\bx_{i}}{\bz_{i}(\iterIndex)}{i}
\]
$\bx_{i}$ and $\{\bz_{i}(\iterIndex)\}_{\iterIndex=1}^{\numIter_{i}}$ are dependent. To get around this issue, we consider performing a second independent draw of samples $\{\tilde{\bz}_{i}(\iterIndex)\}_{\iterIndex=1}^{\numIter_{i}}$. Note that we do not need to actually draw new independent samples; this is purely for the sake of analysis. We start from $\bx_{i-1}$ and produce $\tilde{\bx}_{i}$ using these new samples. For example, with SGD, we have
\begin{equation}
\label{estRho:secondSampleSGD}
\begin{aligned}
\bx_{i}(\iterIndex) &= \Pi_{\xSp}\left[ \bx_{i}(\iterIndex-1) - \mu(\iterIndex) \sgrad{\bx_{i}(\iterIndex-1)}{\bz_{i}(\iterIndex)}{i}  \right]  \\
\tilde{\bx}_{i}(\iterIndex) &= \Pi_{\xSp}\left[ \tilde{\bx}_{i}(\iterIndex-1) - \mu(\iterIndex) \sgrad{\bx_{i}(\iterIndex-1)}{\tilde{\bz}_{i}(\iterIndex)}{i}  \right] 
\end{aligned}
\end{equation}
for $\iterIndex=1,\ldots,\numIter_{i}$ with $\bx_{i}(0) = \tilde{\bx}_{i}(0) = \bx_{i-1}$. Then we copy the form of the direct estimate using $\tilde{\bx}_{i}$ in place of $\bx_{i}$ by defining
\begin{equation}
\label{estRho:rhoTwoEstimate}
\tilde{\rho}_{i}^{(2)} = \|\tilde{\bx}_{i} - \tilde{\bx}_{i-1}\|_{2} + \frac{1}{m} \| \tilde{G}_{i} \|_{2} + \frac{1}{m} \| \tilde{G}_{i-1} \|_{2}
\end{equation}
with
\[
\tilde{G}_{i} = \frac{1}{\numIter_{i}} \sum_{\iterIndex=1}^{\numIter_{i}} \sgrad{\tilde{\bx}_{i}}{\bz_{i}(\iterIndex)}{i}.
\]
In this case, $\tilde{\bx}_{i}$ and $\{\bz_{i}(\iterIndex)\}_{\iterIndex=1}^{\numIter_{i}}$ are independent, so $\mathbb{E}[\tilde{\rho}_{i}^{(2)}] \geq \rho$ by \cref{estRho:dirEstJustLemma}. Under \cref{prob_form:opt_change_eq}, using a dependent sub-Gaussian concentration inequality from \cite{Antonini2005} similar to Hoeffding's inequality, we then argue that $\hat{\rho}_{n}$ from \cref{estRho:combEq:euclid} is close to 
\[
\hat{\rho}_{n}^{(2)} = \frac{1}{n-1} \sum_{i=2}^{n} \tilde{\rho}_{i}^{(2)}
\]
which in turn upper bounds $\rho$ for all $n$ large enough almost surely. Similarly, under \cref{prob_form:opt_change_eq_L2}, we show that $\hat{\rho}_{n}^{2}$ from \cref{estRho:combEq:L2} is close to $(\hat{\rho}_{n}^{(2)})^{2}$, which in turn upper bounds $\rho^{2}$ for all $n$ large enough almost surely.

To proceed with our analysis, suppose that the following conditions hold:
\begin{description}
	\item\namedlabel{probState:assumpB1}{B.1} Suppose there exist functions $C_{i}(\numIter_{i})$ such that
	$\mathbb{E}\left[ \|\bx_{i} - \tilde{\bx}_{i}\|_{2}^{2} \;|\; \mathcal{F}_{i-1} \right] \leq C_{i}^{2}(\numIter_{i})$ with the $\sigma$-algebra defined in \eqref{withRhoUnknown:FSigAlg}.
	\item\namedlabel{probState:assumpB2}{B.2} Suppose that $\forall \bx,\tilde{\bx} \in \xSp$ it holds that
	\[
	\mathbb{E}\left[ \|\sgrad{\bx}{\bz_{i}}{i} - \sgrad{\tilde{\bx}}{\bz_{i}}{i}\|_{2} \;|\; \mathcal{F}_{i-1}  \right] \leq M \mathbb{E}\left[ \| \bx - \tilde{\bx} \|_{2} \;|\; \mathcal{F}_{i-1} \right]
	\]
	and
	\[
	\mathbb{E}\left[ \|\sgrad{\bx}{\bz_{i}}{i} - \nabla f_{i}(\bx) \|_{2} \;|\; \mathcal{F}_{i-1}  \right] \leq \sigma \;\; \forall \bx \in \xSp.
	\]
	\item\namedlabel{probState:assumpB3}{B.3} Suppose the gradients are bounded in the sense that
	$\| \sgrad{\bx}{\bz}{n} \|_{2} \leq G \;\;\;\; \forall \bx \in \xSp , \bz \in \mathcal{Z} $.
\end{description}
Assumption~\ref{probState:assumpB1} is a bound on the difference in how far apart two independent outputs of the optimization algorithm $\bx_{i}$ and $\tilde{\bx}_{i}$ starting from $\bx_{i-1}$ are. \cw{Due to Assumptions~\ref{probState:assump1} and \ref{probState:assump4}, we always have the following choice of $C(\numIter_{i})$:
\begin{align}
\mathbb{E}&\left[ \|\bx_{i} - \tilde{\bx}_{i}\|^{2} \;|\; \mathcal{F}_{i-1} \right] \nonumber \\
 &\leq 2 \mathbb{E}\left[ \|\bx_{i} - \bx_{i}^{*}\|^{2} \;|\; \mathcal{F}_{i-1} \right] + 2 \mathbb{E}\left[ \|\tilde{\bx}_{i} - \bx_{i}^{*}\|^{2} \;|\; \mathcal{F}_{i-1} \right] \nonumber \\
 &\leq \frac{4}{m} \left( \mathbb{E}\left[ f(\bx_{i}) - f(\bx_{i}^{*}) \;|\; \mathcal{F}_{i-1} \right] +  \mathbb{E}\left[ f(\tilde{\bx}_{i}) - f(\bx_{i}^{*}) \;|\; \mathcal{F}_{i-1} \right]  \right) \nonumber \\
 & \leq \frac{4}{m} b(\text{diam}(\xSp), \numIter_{i}) \triangleq C(\numIter_{i}). \nonumber
\end{align}
By a more sophisticated analysis specific to the particular chosen optimization algorithm, it is possible to get tighter $C(\numIter_{i})$ bounds \cite{Wilson2016a}.} Assumption~\ref{probState:assumpB2} controls first, how the noisy gradient changes when it is evaluated at two points $\bx$ and $\tilde{\bx}$ and second, the amount of noise in the noisy gradient. \cw{The first condition is similar to a Lipschitz gradient assumption except imposed on the noisy gradient. For the second part of this assumption, by applying Jensen's inequality and the linearity of the trace of a matrix, it follows that
\begin{align}
\mathbb{E}&\left[ \|\sgrad{\bx}{\bz_{i}}{i} - \nabla f_{i}(\bx) \| \;|\; \mathcal{F}_{i-1}  \right]^{2} \nonumber \\
&\leq \mathbb{E}\left[ \|\sgrad{\bx}{\bz_{i}}{i} - \nabla f_{i}(\bx) \|^{2} \;|\; \mathcal{F}_{i-1}  \right] \nonumber \\
&= \textrm{tr}\left\{ \mathbb{E}\left[ \left( \sgrad{\bx}{\bz_{i}}{i} - \nabla f_{i}(\bx) \right) \left( \sgrad{\bx}{\bz_{i}}{i} - \nabla f_{i}(\bx) \right)^{\top} \;|\; \mathcal{F}_{i-1}  \right] \right\} \nonumber \\
&= \textrm{tr}\left\{ \textrm{Cov}_{\bz_{i}}(\sgrad{\bx}{\bz_{i}}{i})  \right\} \nonumber
\end{align}
where $\textrm{Cov}_{\bz_{i}}(\sgrad{\bx}{\bz_{i}}{i})$ is the covariance matrix of the noisy gradients. Provided there is a uniform bound on the trace of the covariance matrix over $\bx \in \xSp$ and $i \geq 1$, this assumption holds.} Finally, Assumption~\ref{probState:assumpB3} is reasonable if the space $\mathcal{Z}$ that contains the $\bz_{n}$ has finite diameter \cw{and $\sgrad{\bx}{\bz}{n}$ is continuous in the pair $(\bx,\bz)$. In this case, it holds that
\[
\sup_{\bx \in \xSp,\bz \in \mathcal{Z}} \| \sgrad{\bx}{\bz}{n}\| < +\infty
\]
and the assumption is satisfied.}

In the following theorem, we establish that the direct estimate from \cref{estRho:combEq:euclid} (i.e., the Euclidean norm condition) upper bounds $\rho$ from \cref{prob_form:opt_change_eq} eventually.
\begin{thm}
\label{rho_conc_eq}
Provided that assumptions~\ref{probState:assumpB1}-\ref{probState:assumpB3} hold and our sequence $\{t_{n}\}$\footnote{Note that a choice of $t_{n}$ that is no greater than $1/\sqrt{n-1}$ works here.} satisfies
\[
\sum_{n=2}^{\infty} \left( \exp\left\{ - \frac{(n-1)t_{n}^{2}}{18 \text{diam}^{2}(\xSp)}\right\} + 2\exp\left\{ - \frac{m^2(n-1)t_{n}^{2}}{72 G^{2}} \right\} \right) < +\infty
\]
it holds that for all $n$ large enough 
$\hat{\rho}_{n} + D_{n} + t_{n} \geq \rho$
almost surely with
\iftoggle{useTwoColumn}{
$D_{n}$ defined in \eqref{rho_conc_eq:Dn}
\begin{figure*}[!t]
\normalsize
\begin{align}
\label{rho_conc_eq:Dn}
D_{n} &= \frac{1}{n-1} \left[ \left(1 + \frac{M}{m}\right) C_{1} + \sqrt{\frac{\sigma}{\numIter_{1}}}  + 2 \sum_{i=2}^{n-1} \left( \left(1 + \frac{M}{m}\right) C_{i} + \sqrt{\frac{\sigma}{\numIter_{i}}} \right) + \left(1 + \frac{M}{m}\right) C_{n} + \sqrt{\frac{\sigma}{\numIter_{n}}}  \right]
\end{align}
\hrulefill
\vspace*{4pt}
\end{figure*}
}{
\begin{equation}
\label{rho_conc_eq:Dn}
D_{n} = \frac{1}{n-1} \left[ \left(1 + \frac{M}{m}\right) C_{1} + \sqrt{\frac{\sigma}{\numIter_{1}}}  + 2 \sum_{i=2}^{n-1} \left( \left(1 + \frac{M}{m}\right) C_{i} + \sqrt{\frac{\sigma}{\numIter_{i}}} \right) + \left(1 + \frac{M}{m}\right) C_{n} + \sqrt{\frac{\sigma}{\numIter_{n}}}  \right].
\end{equation}
}
\end{thm}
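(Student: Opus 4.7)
The overall strategy is to prove the almost-sure claim via the Borel--Cantelli lemma: it suffices to establish $\sum_{n=2}^{\infty} \mathbb{P}(\hat\rho_n + D_n + t_n < \rho) < \infty$, which will follow if, for each $n$, I bound this tail by the two exponentials appearing in the stated hypothesis on $\{t_n\}$. I decompose $\rho - \hat\rho_n = (\rho - \hat\rho_n^{(2)}) + (\hat\rho_n^{(2)} - \hat\rho_n)$ and control each piece separately.

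For the first piece, I show $\mathbb{E}[\tilde\rho_i^{(2)}\mid\mathcal{F}_{i-1}]\geq\rho$ almost surely. Because $\tilde\bx_i$ is built from samples independent of $\{\bz_i(k)\}_k$ given $\mathcal{F}_{i-1}$, we have $\mathbb{E}[\tilde G_i\mid\tilde\bx_i,\mathcal{F}_{i-1}]=\nabla f_i(\tilde\bx_i)$, and Jensen's inequality yields $\mathbb{E}[\|\tilde G_i\|\mid\mathcal{F}_{i-1}]\geq\mathbb{E}[\|\nabla f_i(\tilde\bx_i)\|\mid\mathcal{F}_{i-1}]$. Combining this with the strong-monotonicity bound $\|\tilde\bx_i-\bx_i^*\|\leq(1/m)\|\nabla f_i(\tilde\bx_i)\|$ of Lemma~\ref{estRho:dirEstJustLemma}, the triangle inequality $\|\bx_i^*-\bx_{i-1}^*\|\leq\|\tilde\bx_i-\tilde\bx_{i-1}\|+\|\tilde\bx_i-\bx_i^*\|+\|\tilde\bx_{i-1}-\bx_{i-1}^*\|$, and the equality assumption~\eqref{prob_form:opt_change_eq}, proves the bound. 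The differences $V_i=\tilde\rho_i^{(2)}-\mathbb{E}[\tilde\rho_i^{(2)}\mid\mathcal{F}_{i-1}]$ then form a bounded martingale difference sequence: the position part $\|\tilde\bx_i-\tilde\bx_{i-1}\|\leq\text{diam}(\xSp)$, and the gradient part $(1/m)(\|\tilde G_i\|+\|\tilde G_{i-1}\|)\leq 2G/m$ by Assumption~\ref{probState:assumpB3}. Azuma--Hoeffding (or the dependent sub-Gaussian inequality of~\cite{Antonini2005}) applied separately to the position and gradient components---absorbing telescoping overlaps into effective range multipliers of $3\,\text{diam}(\xSp)$ and $6G/m$---then gives
\[
\mathbb{P}\bigl(\hat\rho_n^{(2)}<\rho-s_n\bigr) \leq \exp\!\Bigl(-\tfrac{(n-1)s_n^2}{18\,\text{diam}^2(\xSp)}\Bigr) + 2\exp\!\Bigl(-\tfrac{m^2(n-1)s_n^2}{72G^2}\Bigr),
\]
which matches the two exponentials in the theorem's hypothesis once $s_n=t_n$.

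For the second piece, I bound $\hat\rho_n^{(2)}-\hat\rho_n$ using $|\,\|a\|-\|b\|\,|\leq\|a-b\|$ to write
\[
\tilde\rho_i^{(2)}-\tilde\rho_i \leq \|\bx_i-\tilde\bx_i\|+\|\bx_{i-1}-\tilde\bx_{i-1}\|+\tfrac{1}{m}\|\hat G_i-\tilde G_i\|+\tfrac{1}{m}\|\hat G_{i-1}-\tilde G_{i-1}\|.
\]
Jensen combined with Assumption~\ref{probState:assumpB1} gives $\mathbb{E}[\|\bx_i-\tilde\bx_i\|\mid\mathcal{F}_{i-1}]\leq C_i(\numIter_i)$. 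Writing $\hat G_i-\tilde G_i$ as the sample average of $\sgrad{\bx_i}{\bz_i(k)}{i}-\sgrad{\tilde\bx_i}{\bz_i(k)}{i}$ and invoking the first (Lipschitz) part of Assumption~\ref{probState:assumpB2} produces an expected contribution bounded by $(M/m)C_i(\numIter_i)$, while the second (noise-bound) part of Assumption~\ref{probState:assumpB2}, together with a variance/Cauchy--Schwarz argument on the sample mean at the auxiliary point, contributes the $\sqrt{\sigma/\numIter_i}$ term. Careful bookkeeping---each $i\in\{2,\dots,n-1\}$ appears in both $\tilde\rho_i$ and $\tilde\rho_{i+1}$ while $i=1$ and $i=n$ appear once---reproduces precisely the expression~\eqref{rho_conc_eq:Dn} for $D_n$.

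The main obstacle is this last step: Assumption~\ref{probState:assumpB2} is stated for deterministic arguments but must be applied to the random, non-$\mathcal{F}_{i-1}$-measurable iterates $\bx_i,\tilde\bx_i$, which I would handle by conditioning on an enlarged $\sigma$-algebra generated by these iterates and invoking the assumption pointwise. Moreover, the bound $\hat\rho_n^{(2)}-\hat\rho_n\leq D_n$ is naturally an in-expectation statement, so its conversion to a pathwise statement requires absorbing residual fluctuation into $t_n$ via a secondary concentration step whose tails must fit inside the two exponential terms above. With these pieces assembled, the summability hypothesis on $\{t_n\}$ plus Borel--Cantelli deliver the almost-sure conclusion $\hat\rho_n+D_n+t_n\geq\rho$ for all $n$ large enough.
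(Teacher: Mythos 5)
Your overall architecture (second independent draw, conditional Hoeffding plus the Antonini martingale inequality, Borel--Cantelli) matches the paper, but your first piece contains a genuine gap: the claim $\mathbb{E}[\tilde{\rho}_i^{(2)} \mid \mathcal{F}_{i-1}] \geq \rho$ is false as stated. The estimate $\tilde{\rho}_i^{(2)}$ in \eqref{estRho:rhoTwoEstimate} contains the term $\frac{1}{m}\|\tilde{G}_{i-1}\|$, and the samples $\bz_{i-1}(\iterIndex)$ entering $\tilde{G}_{i-1}$ are $\mathcal{F}_{i-1}$-measurable by \eqref{withRhoUnknown:FSigAlg}. Conditioning on $\mathcal{F}_{i-1}$ therefore freezes the empirical gradient field at time $i-1$: your Jensen argument is valid for the time-$i$ term (since $\tilde{\bx}_i$ is conditionally independent of $\{\bz_i(\iterIndex)\}$), but it is unavailable for $\|\tilde{G}_{i-1}\|$, which on a given path can undershoot $\|\nabla f_{i-1}(\tilde{\bx}_{i-1})\|$, breaking the chain through Lemma~\ref{estRho:dirEstJustLemma} down to $\rho$. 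Only the \emph{unconditional} bound $\mathbb{E}[\tilde{\rho}_i^{(2)}] \geq \rho$ holds --- which is all the paper asserts in its prose --- and an unconditional mean cannot feed Lemma~\ref{subgauss:subgaussDepLem}, whose hypothesis concerns conditional means along the filtration. The expected size of the deficit is precisely of order $\mathbb{E}[\|\tilde{G}_{i-1} - \nabla f_{i-1}(\tilde{\bx}_{i-1})\|\mid \mathcal{F}_{i-2}] \leq \sqrt{\sigma/\numIter_{i-1}}$, which reveals a second accounting error in your proposal: the $\sqrt{\sigma/\numIter_i}$ terms in $D_n$ from \eqref{rho_conc_eq:Dn} do not arise from the difference $\hat{\rho}_n^{(2)} - \hat{\rho}_n$ (that difference generates only the $C_i$ and $\frac{M}{m}C_i$ terms, via \ref{probState:assumpB1} and the first half of \ref{probState:assumpB2}, since $\hat{G}_i - \tilde{G}_i$ involves the same samples at two points); they are needed exactly to patch the hole in your piece one.

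The paper repairs this by introducing a \emph{third} comparator $\tilde{\rho}_i^{(3)}$ built from the true gradients $\nabla f_i(\tilde{\bx}_i)$: by \eqref{dir_est_deriv} applied at the independent iterates, $\tilde{\rho}_i^{(3)} \geq \rho$ holds \emph{pathwise} under \eqref{prob_form:opt_change_eq}, so no concentration budget is spent on the comparison with $\rho$ at all. All stochastic error is pushed into $|\hat{\rho}_n - \hat{\rho}_n^{(3)}| \leq U_n + V_n + W_n$, three weighted sums whose conditional means given $\mathcal{F}_{i-1}$ are bounded by $C_i$, $\frac{M}{m}C_i$, and $\sqrt{\sigma/\numIter_i}$ respectively, each concentrated by Lemma~\ref{estRho:condHoeffdingLemma} and Lemma~\ref{subgauss:subgaussDepLem} at deviation $t_n/3$ with overlap weights $(1,2,\ldots,2,1)/(n-1)$; this three-way split of $t_n$ is what produces the constants $18$ and $72$ in the summability hypothesis. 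Your proposal instead spends the full deviation $t_n$ on piece one (so that its tail already matches the theorem's hypothesis) and then concedes that a further concentration step for piece two must ``fit inside'' the same exponentials --- it cannot, and your ``effective range multipliers'' of $3\,\text{diam}(\xSp)$ and $6G/m$ are reverse-engineered from the target constants rather than derived. If you repair piece one by comparing $\hat{\rho}_n^{(2)}$ against its true-gradient counterpart, you will have reconstructed $\hat{\rho}_n^{(3)}$, the $W_n$ process, and the deviation split --- i.e., essentially the paper's proof.
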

\begin{proof}
See \Cref{usefulConcIneq} .
\end{proof}
From now on, for notational convenience, we absorb $D_{n}$ into the $t_{n}$ term and refer only to $\hat{\rho}_{n} + t_{n}$.

\subsubsection{$L_{2}$ Norm Condition}
Under \cref{prob_form:opt_change_eq_L2}, we construct an estimate by averaging the squares of the one step estimates and taking a square root
\begin{equation}
\label{estRho:combEq:L2}
\hat{\rho}_{n} \triangleq \sqrt{\frac{1}{n-1} \sum_{i=2}^{n} \tilde{\rho}_{i}^{2}}.
\end{equation}

In the following theorem, we establish that the direct estimate from \cref{estRho:combEq:L2} upper bounds $\rho$ from \cref{prob_form:opt_change_eq_L2} eventually.
\begin{thm}
	\label{rho_conc_eq_sq}
	Provided that assumptions~\ref{probState:assumpB1}-\ref{probState:assumpB3} hold and our sequence $\{t_{n}\}$ satisfies
	\[
	\sum_{n=2}^{\infty} \left( \exp\left\{ - \frac{(n-1)t_{n}^{2}}{18 \text{diam}^{2}(\xSp)}\right\} + 2\exp\left\{ - \frac{m^2(n-1)t_{n}^{2}}{72 G^{2}} \right\} \right) < +\infty
	\]
	it holds that for all $n$ large enough 
	$\sqrt{\left( \hat{\rho}_{n} \right)^{2} + \tilde{D}_{n} + t_{n}} \geq \rho$
	almost surely with\\ $\tilde{D}_{n} = 2 \text{diam}(\xSp) D_{n}$.
\end{thm}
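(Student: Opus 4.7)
\noindent The plan is to mirror the proof of \Cref{rho_conc_eq} at the level of \emph{squared} one-step estimates, and then transfer the resulting bound back to $\hat{\rho}_n^2$ via the identity $a^2-b^2=(a-b)(a+b)$. The key observation behind $\tilde{D}_n=2\,\text{diam}(\xSp)\,D_n$ is that, once each $\tilde{\rho}_i$ (and its coupled version $\tilde{\rho}_i^{(2)}$ of \eqref{estRho:rhoTwoEstimate}) is truncated at $\text{diam}(\xSp)$---a step that preserves the upper-bound property since the true one-step change $\|\bx_i^*-\bx_{i-1}^*\|$ never exceeds $\text{diam}(\xSp)$---the sum $\tilde{\rho}_i+\tilde{\rho}_i^{(2)}$ never exceeds $2\,\text{diam}(\xSp)$.

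First, I produce the squared analogue of the unbiasedness inequality used in \Cref{rho_conc_eq}. Because, conditional on $\mathcal{F}_{i-1}$ and $(\tilde{\bx}_i,\tilde{\bx}_{i-1})$, the samples defining $\tilde{G}_i$ and $\tilde{G}_{i-1}$ are independent of the iterates, conditional Jensen (applied once per term after expanding the square and conditioning on $(\tilde{\bx}_i,\tilde{\bx}_{i-1})$) combined with \Cref{estRho:dirEstJustLemma} and the triangle inequality yields
\[
\|\bx_i^*-\bx_{i-1}^*\|^2 \;\le\; \mathbb{E}\bigl[(\tilde{\rho}_i^{(2)})^2\,\big|\,\mathcal{F}_{i-1}\bigr].
\]
Averaging $i=2,\ldots,n$ and invoking \eqref{prob_form:opt_change_eq_L2} gives $\rho^2 \le \frac{1}{n-1}\sum_{i=2}^n \mathbb{E}[(\tilde{\rho}_i^{(2)})^2 \mid \mathcal{F}_{i-1}]$.

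Second, I concentrate $(\hat{\rho}_n^{(2)})^2$ around this conditional mean. The truncated summands $(\tilde{\rho}_i^{(2)})^2$ are uniformly bounded, so $\{(\tilde{\rho}_i^{(2)})^2 - \mathbb{E}[(\tilde{\rho}_i^{(2)})^2\mid\mathcal{F}_{i-1}]\}$ is a bounded martingale-difference-type sequence. Applying the same Antonini-style dependent sub-Gaussian inequality used in the proof of \Cref{rho_conc_eq} produces a tail bound whose exponent matches the hypothesized summability of $\{t_n\}$; Borel--Cantelli then forces $(\hat{\rho}_n^{(2)})^2+t_n \ge \rho^2$ for all $n$ large enough, almost surely.

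Third, I transfer from $(\hat{\rho}_n^{(2)})^2$ to $\hat{\rho}_n^2$ via the factorization
\[
\bigl|\hat{\rho}_n^2-(\hat{\rho}_n^{(2)})^2\bigr| \;\le\; \frac{1}{n-1}\sum_{i=2}^n(\tilde{\rho}_i+\tilde{\rho}_i^{(2)})\bigl|\tilde{\rho}_i-\tilde{\rho}_i^{(2)}\bigr| \;\le\; \frac{2\,\text{diam}(\xSp)}{n-1}\sum_{i=2}^n\bigl|\tilde{\rho}_i-\tilde{\rho}_i^{(2)}\bigr|,
\]
where the second inequality uses the envelope from the opening observation. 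The final average on the right is exactly the quantity controlled in the proof of \Cref{rho_conc_eq}, contributing $D_n$ from \eqref{rho_conc_eq:Dn}; multiplying by $2\,\text{diam}(\xSp)$ produces $\tilde{D}_n$. Chaining the three steps gives $\rho^2 \le \hat{\rho}_n^2 + \tilde{D}_n + t_n$ eventually almost surely, and a square root completes the proof. The main obstacle is the envelope step: justifying the truncation of $\tilde{\rho}_i$ and $\tilde{\rho}_i^{(2)}$ at $\text{diam}(\xSp)$ so that the factor in $a^2-b^2=(a-b)(a+b)$ is exactly $2\,\text{diam}(\xSp)$, without upsetting Step~1 (the unbiasedness lower bound) or Step~2 (the sub-Gaussian envelope for concentration).
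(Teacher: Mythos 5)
Your overall skeleton --- concentrate at the level of squares, then transfer back to $\hat{\rho}_{n}^{2}$ via $a^{2}-b^{2}=(a+b)(a-b)$ with envelope $2\,\text{diam}(\xSp)$ so that $D_{n}$ is reused and $\tilde{D}_{n}=2\,\text{diam}(\xSp)D_{n}$ appears --- is exactly the paper's (the proof of \cref{rho_conc_eq_sq} together with \cref{estRho:rho3SqLemma}). But there is a genuine gap in where you anchor the argument: you work with $\tilde{\rho}_{i}^{(2)}$ from \eqref{estRho:rhoTwoEstimate} and claim $\mathbb{E}[(\tilde{\rho}_{i}^{(2)})^{2}\mid\mathcal{F}_{i-1}]\geq\|\bx_{i}^{*}-\bx_{i-1}^{*}\|^{2}$ by conditional Jensen, then concentrate $(\hat{\rho}_{n}^{(2)})^{2}$ directly. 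The Jensen step fails for the term $\frac{1}{m}\|\tilde{G}_{i-1}\|$: the samples $\{\bz_{i-1}(\iterIndex)\}$ defining $\tilde{G}_{i-1}$ are $\mathcal{F}_{i-1}$-measurable, so conditional on $\mathcal{F}_{i-1}$ --- the same conditioning that the Antonini-type \cref{subgauss:subgaussDepLem} forces on you in your Step 2 --- there is no fresh randomness left over which to apply Jensen, and $\mathbb{E}[\|\tilde{G}_{i-1}\|\mid\mathcal{F}_{i-1}]$ cannot be replaced from below by $\|\nabla f_{i-1}(\tilde{\bx}_{i-1})\|$. The discrepancy between those two quantities is precisely the gradient-noise contribution of order $\sqrt{\sigma/\numIter_{i-1}}$ from assumption~\ref{probState:assumpB2}, which must surface somewhere; this is exactly why the paper introduces the third estimate $\tilde{\rho}_{i}^{(3)}$ built from the \emph{exact} gradients $\nabla f_{i}(\tilde{\bx}_{i})$. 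That estimate dominates $\|\bx_{i}^{*}-\bx_{i-1}^{*}\|$ pointwise by \cref{estRho:dirEstJustLemma}, so $\mathbb{E}[(\tilde{\rho}_{i}^{(3)})^{2}\mid\mathcal{F}_{i-1}]\geq\rho^{2}$ holds with no independence gymnastics under \eqref{prob_form:opt_change_eq_L2}, the concentration goes through cleanly (\cref{estRho:rho3SqLemma}), and all stochastic-gradient error is quarantined in the difference $|\hat{\rho}_{n}^{(2)}-\hat{\rho}_{n}^{(3)}|$ (the $W_{n}$ term) --- which is the only reason $D_{n}$ contains the $\sqrt{\sigma/\numIter_{i}}$ terms at all. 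Correspondingly, your transfer step needs two links, $|\hat{\rho}_{n}^{2}-(\hat{\rho}_{n}^{(2)})^{2}|$ and $|(\hat{\rho}_{n}^{(2)})^{2}-(\hat{\rho}_{n}^{(3)})^{2}|$, not one.

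The second problem is the truncation device you yourself flag as ``the main obstacle,'' and your worry is justified: truncating at $\text{diam}(\xSp)$ preserves the lower-bound property only for a \emph{pointwise} dominator. For $\tilde{\rho}_{i}^{(3)}$ this is fine, since $\tilde{\rho}_{i}^{(3)}\geq\|\bx_{i}^{*}-\bx_{i-1}^{*}\|$ almost surely and the right-hand side never exceeds $\text{diam}(\xSp)$, so $\min\{\tilde{\rho}_{i}^{(3)},\text{diam}(\xSp)\}$ still dominates pointwise; but for $\tilde{\rho}_{i}^{(2)}$, which dominates only in conditional mean, $\mathbb{E}[X\mid\mathcal{F}]\geq\rho^{2}$ does not imply $\mathbb{E}[\min\{X,c\}\mid\mathcal{F}]\geq\rho^{2}$, so your Step 1 and your envelope step are mutually incompatible as stated. (Truncating the actual estimator $\tilde{\rho}_{i}$ is harmless, since the theorem asserts a lower bound on $\hat{\rho}_{n}$ and truncation only decreases it.) The repair is the paper's route: anchor at $\tilde{\rho}_{i}^{(3)}$, where both the conditional lower bound and the truncation are legitimate and where the $1$-Lipschitzness of $x\mapsto\min\{x,c\}$ keeps the difference bounds, hence $D_{n}$, valid; then chain
\[
\bigl|\hat{\rho}_{n}^{2}-(\hat{\rho}_{n}^{(3)})^{2}\bigr|\;\leq\;2\,\text{diam}(\xSp)\left(\bigl|\hat{\rho}_{n}-\hat{\rho}_{n}^{(2)}\bigr|+\bigl|\hat{\rho}_{n}^{(2)}-\hat{\rho}_{n}^{(3)}\bigr|\right)
\]
and run the concentration argument of \cref{rho_conc_eq} on the two differences.
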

\begin{proof}
See \Cref{usefulConcIneq}.
\end{proof}

\subsection{Combining with Bounded Changes of Minimizers}
\label{tracking_est_rho:boundedChange}

We examine estimating $\rho$ in the case that \cref{prob_form:opt_change_L2} holds.
We denote the exact one step time changes by $\rho_{i} \triangleq \| \bx_{i}^{*} - \bx_{i-1}^{*} \|$. The simplest way to combine the estimates from \cref{dir_est_def} would be to set
\[
\hat{\rho}_{n} = \max\{\tilde{\rho}_{2},\ldots,\tilde{\rho}_{n}\}.
\]
For the sake of argument, suppose that $\tilde{\rho}_{i} = \rho_{i} + e_{i}$ with independent $e_{i} \sim \mathcal{N}(0,\sigma^{2})$. Then it follows that \cite[Ex. 10.5.3 on p~.302]{David03}
\[
\mathbb{E}[\hat{\rho}_{n}] \geq \mathbb{E}[\max\{e_{2},\ldots,e_{n}\}].
\]
For independent Gaussian random variables, it holds that $\mathbb{E}[\max\{e,\ldots,e_{n}\}] \to \infty$ as $n \to \infty$ \cw{\cite[Ex. 10.5.3 on p~.302]{David03}}, and therefore this estimate goes to infinity as $n \to \infty$. We do produce an upper bound, but it increases to the trivial bound $\text{diam}^{2}(\xSp)$. 
Next, we examine how to avoid this issue.

\subsubsection{Euclidean Norm Condition}
Suppose that the following conditions hold.
\begin{description}
	\item\namedlabel{probState:assumpB4}{B.4} We have estimates $\hat{h}_{W}: \mathbb{R}^{W} \to \mathbb{R}$ that are non-decreasing in their arguments such that
	\[
	\mathbb{E}[ \hat{h}_{W}(\rho_{j},\ldots,\rho_{j-W+1}) ] \geq \rho.
	\]
	\item\namedlabel{probState:assumpB5}{B.5} 
	There exists absolute constants $\{a_{i}\}_{i=1}^{W}$ for any fixed 
	\an{$W\ge 1$} such that $\forall \bm{p},\bm{q} \in \mathbb{R}^{W}_{\geq 0}$
	\[
	|\hat{h}_{W}(p_{1},\ldots,p_{W}) - \hat{h}_{W}(q_{1},\ldots,q_{W})| \leq \sum_{i=1}^{W} a_{i} |p_{i} - q_{i}|.
	\]
\end{description}

For example, if $\rho_{i} \overset{\text{iid}}{\sim} \text{Unif}[0,\rho]$, then
\begin{equation}
\label{estRho:euclid:hFunc}
\hat{h}_{W}\left( \rho_{i},\rho_{i+1},\ldots,\rho_{i+W-1} \right) = \frac{W+1}{W} \max\{ \rho_{i},\rho_{i+1},\ldots,\rho_{i+W-1} \}
\end{equation}
is an estimate of $\rho$ with the required properties with $a_{i} = 1 + \frac{1}{W}$. In this case, we compute the maximum in \eqref{estRho:euclid:hFunc} over a sliding window and then average the maxima. This estimate will not blow up and will eventually upper bound $\rho$ as we will see in \cref{rho_conc_ineq} below.

Given an estimate satisfying assumptions~\ref{probState:assumpB4}-\ref{probState:assumpB5}, we compute
\[
\bar{\rho}^{(i)} = \hat{h}_{W}(\tilde{\rho}_{i},\tilde{\rho}_{i-1},\ldots,\tilde{\rho}_{i-W+1})
\]
and produce an estimate $\hat{\rho}_{n}$ that is an average of these estimates
\iftoggle{useTwoColumn}{
\begin{align}
\hat{\rho}_{n} &= \frac{1}{n-1} \sum_{i=2}^{n} \bar{\rho}^{(i)} \nonumber \\
\label{ineqCond:basicEst}
&= \frac{1}{n-1} \sum_{i=2}^{n} \hat{h}_{\min\{W,i-1\}}(\tilde{\rho}_{i},\tilde{\rho}_{i-1},\ldots,\tilde{\rho}_{\max\{i-W+1,2\}}).
\end{align}
}{
\begin{equation}
\label{ineqCond:basicEst}
\hat{\rho}_{n} = \frac{1}{n-1} \sum_{i=2}^{n} \bar{\rho}^{(i)} = \frac{1}{n-1} \sum_{i=2}^{n} \hat{h}_{\min\{W,i-1\}}(\tilde{\rho}_{i},\tilde{\rho}_{i-1},\ldots,\tilde{\rho}_{\max\{i-W+1,2\}}).
\end{equation}
}
In the following theorem, we establish that the direct estimate from \cref{ineqCond:basicEst} (i.e., the Euclidean norm condition) upper bounds $\rho$ from \cref{prob_form:opt_change_L2} eventually.
\setcounter{dir_est_ineq_cnt}{\value{thm}}
\begin{thm}
	\label{rho_conc_ineq}
	Provided that assumptions~\ref{probState:assumpB1}-\ref{probState:assumpB5} hold and our sequence $\{t_{n}\}$ is chosen such that
	\iftoggle{useTwoColumn}{
	\begin{align}
	\sum_{n=2}^{\infty} &\left( \exp\left\{ - \frac{(n-W)^{2}t_{n}^{2}}{18(n-1) \text{diam}^{2}(\xSp)\left( \sum_{j=1}^{W} a_{j} \right)^{2}}\right\} \right. \nonumber \\
	&\;\;\;\;\;\;\;\;\;\;\;\;\;\; \left. + 2\exp\left\{ - \frac{m^2(n-W)^2 t_{n}^{2}}{72 (n-1) G^{2}\left( \sum_{j=1}^{W} a_{j} \right)^{2}} \right\} \right) \nonumber
	\end{align}
	}{
	\[
	\sum_{n=2}^{\infty} \left( \exp\left\{ - \frac{(n-W)^{2}t_{n}^{2}}{18(n-1) \text{diam}^{2}(\xSp)\left( \sum_{j=1}^{W} a_{j} \right)^{2}}\right\} + 2\exp\left\{ - \frac{m^2(n-W)^2 t_{n}^{2}}{72 (n-1) G^{2}\left( \sum_{j=1}^{W} a_{j} \right)^{2}} \right\} \right)
	\]	
	}
	is finite,
it holds that for all $n$ large enough 
\[
\hat{\rho}_{n} + \left( \frac{n-1}{n-W} \sum_{j = 1}^{W} a_{j} \right) D_{n} + t_{n} \geq \rho
\]
with $D_{n}$ from \cref{rho_conc_eq}.
\end{thm}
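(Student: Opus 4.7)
The plan is to mirror the three-step structure used in Theorem~\ref{rho_conc_eq}---introduce an independent ``shadow'' replica, control the gap between the real estimator and the shadow estimator in expectation, and then apply a sub-Gaussian concentration inequality plus Borel--Cantelli---but with an extra Lipschitz-and-monotonicity layer coming from the aggregator $\hat h_W$. Concretely, for each $i$ I introduce shadow iterates $\tilde{\bx}_i$ driven by an independent copy of the samples (as in \eqref{estRho:secondSampleSGD}), form the shadow one-step estimates $\tilde{\rho}_i^{(2)}$ as in \eqref{estRho:rhoTwoEstimate}, and define the shadow aggregate $\bar{\rho}^{(i,2)} = \hat h_{\min\{W,i-1\}}\bigl(\tilde{\rho}_i^{(2)},\dots,\tilde{\rho}_{\max\{i-W+1,2\}}^{(2)}\bigr)$ and $\hat{\rho}_n^{(2)} = \tfrac{1}{n-1}\sum_{i=2}^{n}\bar{\rho}^{(i,2)}$.

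For the bias, I would first show $\mathbb{E}[\bar{\rho}^{(i,2)}\mid \mathcal{F}_{i-W}]\ge \rho$ for $i\ge W+1$: the monotonicity of $\hat h_W$ in its arguments (from \ref{probState:assumpB4}) combined with the pointwise bound $\mathbb{E}[\tilde{\rho}_j^{(2)}\mid \mathcal{F}_{j-1}]\ge \rho_j$ (which comes from Lemma~\ref{estRho:dirEstJustLemma} applied to the independent replica, Jensen's inequality for $\|\cdot\|$, and the unbiasedness in \ref{probState:assump3}) reduces the claim to the inequality $\mathbb{E}[\hat h_W(\rho_i,\dots,\rho_{i-W+1})]\ge \rho$ that is postulated in \ref{probState:assumpB4}. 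Averaging, and using $\bar{\rho}^{(i,2)}\ge 0$ on the $W-1$ boundary indices, yields $\mathbb{E}[\hat{\rho}_n^{(2)}]\ge \tfrac{n-W}{n-1}\rho$. Next, the Lipschitz property \ref{probState:assumpB5} gives $|\bar{\rho}^{(i)}-\bar{\rho}^{(i,2)}|\le \sum_{j=1}^{\min\{W,i-1\}} a_j\,|\tilde{\rho}_{i-j+1}-\tilde{\rho}_{i-j+1}^{(2)}|$; summing in $i$ and regrouping, each term $|\tilde{\rho}_k-\tilde{\rho}_k^{(2)}|$ is hit at most $\sum_{j=1}^{W} a_j$ times, so the expectation of $|\hat{\rho}_n-\hat{\rho}_n^{(2)}|$ inherits a $(\sum_j a_j)$ multiple of the per-step bias estimates already controlled in the proof of Theorem~\ref{rho_conc_eq}, giving $\mathbb{E}|\hat{\rho}_n-\hat{\rho}_n^{(2)}|\le (\sum_j a_j)\,D_n$. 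Combining with $\mathbb{E}[\hat{\rho}_n^{(2)}]\ge \tfrac{n-W}{n-1}\rho$ and rearranging produces $\rho\le \tfrac{n-1}{n-W}\mathbb{E}[\hat{\rho}_n] + \tfrac{n-1}{n-W}\bigl(\sum_j a_j\bigr)D_n$, which is precisely the deterministic offset appearing in the statement.

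For the concentration step, I would view $\hat{\rho}_n$ as a function of the underlying samples and invoke the dependent sub-Gaussian McDiarmid/Hoeffding-type inequality of \cite{Antonini2005} used in Theorem~\ref{rho_conc_eq}. Here each $\tilde{\rho}_i$ contributes to at most $W$ summands $\bar{\rho}^{(i')}$, each with Lipschitz weight bounded by $\sum_{j=1}^{W}a_j$, so the bounded-difference coefficients pick up an extra factor of $\sum_{j=1}^{W}a_j$ compared to Theorem~\ref{rho_conc_eq}, and one piece of the argument (controlling the $\|\hat G_i\|$ term) additionally loses a factor of $\tfrac{n-1}{n-W}$ because the effective number of full-window indices is $n-W$ while the normalization is $n-1$. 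This yields exactly the tail bound in the statement, namely a pair of exponentials with the quoted exponents; the summability assumption on $\{t_n\}$ is precisely what is needed for Borel--Cantelli to give $\hat{\rho}_n\ge \mathbb{E}[\hat{\rho}_n]-t_n$ eventually almost surely, which, combined with the bias bound, completes the proof.

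The main obstacle I expect is the bookkeeping in the two interacting bias terms: one has to track how boundary indices with truncated windows $\min\{W,i-1\}<W$ simultaneously weaken the expectation lower bound $\mathbb{E}[\hat{\rho}_n^{(2)}]\ge \rho$ (contributing the $\tfrac{n-W}{n-1}$ factor) and inflate the bounded-differences coefficients in the concentration step, while also verifying that the step ``$\mathbb{E}[\hat h_W(\tilde{\rho}^{(2)})]\ge \mathbb{E}[\hat h_W(\rho)]$'' really goes through---this last step uses only monotonicity, not convexity of $\hat h_W$, so one must argue via the tower property at the innermost conditioning level ($\sigma$-algebra $\mathcal F_{i-1}$ keeps $\rho_i$ deterministic and allows replacing $\tilde{\rho}_j^{(2)}$ by $\rho_j$ one coordinate at a time). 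The rest is essentially a careful reproduction of the estimates in the proof of Theorem~\ref{rho_conc_eq} with the extra multiplicative factor $\sum_{j=1}^{W}a_j$ arising from \ref{probState:assumpB5}.
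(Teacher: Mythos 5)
There is a genuine gap in the bias step of your argument, and it sits exactly at the point you flagged as delicate. The paper's proof does \emph{not} work with the replica estimator $\tilde{\rho}_i^{(2)}$ alone: it introduces a third one-step quantity $\tilde{\rho}_i^{(3)} = \|\tilde{\bx}_i - \tilde{\bx}_{i-1}\|_2 + \frac{1}{m}\|\nabla f_i(\tilde{\bx}_i)\|_2 + \frac{1}{m}\|\nabla f_{i-1}(\tilde{\bx}_{i-1})\|_2$ built from \emph{exact} gradients, which dominates $\rho_i$ pointwise (deterministically, by Lemma~\ref{estRho:dirEstJustLemma} and the triangle inequality, not merely in conditional expectation). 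Pointwise dominance is what lets monotonicity of $\hat{h}_W$ from \ref{probState:assumpB4} be applied $\omega$-by-$\omega$, giving $\hat{h}_W(\tilde{\rho}_i^{(3)},\ldots) \geq \hat{h}_W(\rho_i,\ldots)$ almost surely, after which the expectation property in \ref{probState:assumpB4} yields $\mathbb{E}[\hat{\rho}_n^{(3)}\mid\cdot] \geq \rho$. Your proposal only has the conditional-mean dominance $\mathbb{E}[\tilde{\rho}_j^{(2)} \mid \mathcal{F}_{j-1}] \geq \rho_j$, and the tower-property, one-coordinate-at-a-time replacement you invoke to push this through $\hat{h}_W$ is invalid under monotonicity alone: the replacement step requires $\mathbb{E}[h(\ldots,X_j,\ldots)\mid\mathcal{F}] \geq h(\ldots,\rho_j,\ldots)$ from $\mathbb{E}[X_j\mid\mathcal{F}] \geq \rho_j$, which fails for non-convex monotone $h$ (take $h(x)=\min\{x,1\}$ and $X_j \in \{0,2\}$ equiprobable: the conditional mean is $1$ but $\mathbb{E}[h(X_j)] = \tfrac{1}{2} < h(1)$). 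Assumption~\ref{probState:assumpB5} is a Lipschitz condition and permits concave $\hat{h}_W$, so nothing in the hypotheses rescues this step; it would go through only if you additionally assumed $\hat{h}_W$ convex (true for the max-based example, but not assumed). The repair is precisely the paper's route: prove $\hat{\rho}_n^{(3)} + t_n \geq \rho$ eventually via pointwise dominance, monotonicity, \ref{probState:assumpB4}, Lemma~\ref{subgauss:subgaussDepLem} and Borel--Cantelli, and then control $|\hat{\rho}_n - \hat{\rho}_n^{(3)}|$ through the two differences $|\tilde{\rho}_i - \tilde{\rho}_i^{(2)}|$ and $|\tilde{\rho}_i^{(2)} - \tilde{\rho}_i^{(3)}|$, regrouped with total weight at most $\sum_{j=1}^{W} a_j$ per index, which reproduces the equality-case quantity of \cref{rho_conc_eq} scaled by $\frac{n-1}{n-W}\sum_{j=1}^{W} a_j$ --- exactly your concentration bookkeeping, which is fine.

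A secondary discrepancy: by discarding the boundary indices via $\bar{\rho}^{(i,2)} \geq 0$ you obtain $\mathbb{E}[\hat{\rho}_n^{(2)}] \geq \frac{n-W}{n-1}\rho$, and after rearranging your conclusion reads $\rho \leq \frac{n-1}{n-W}\,\hat{\rho}_n + \frac{n-1}{n-W}\bigl(\sum_j a_j\bigr)D_n + \cdots$, with the inflation factor landing on $\hat{\rho}_n$ itself. This is strictly weaker than the theorem, where $\frac{n-1}{n-W}\sum_j a_j$ multiplies only $D_n$. The paper avoids this loss by keeping the truncated-window aggregates $\hat{h}_{\min\{W,i-1\}}$ in \cref{ineqCond:basicEst}, each of which satisfies \ref{probState:assumpB4} for its own window length, so the bias bound $\mathbb{E}[\hat{\rho}_n^{(3)}\mid\cdot] \geq \rho$ holds at full normalization $\frac{1}{n-1}$ and the factor $\frac{n-1}{n-W}$ enters only through the Lipschitz regrouping of the differences.
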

\begin{proof}
	The proof in this case is similar to the proof for the equality assumption on $\rho$ in \cref{prob_form:opt_change_eq} and is provided in \Cref{usefulConcIneq}.
\end{proof}
As before, we will absorb $\left( \frac{n-1}{n-W} \sum_{j = 1}^{W} a_{j} \right) D_{n}$ into $t_{n}$.

\subsubsection{$L_{2}$ Norm Condition}
Suppose that the following conditions hold, which are\\ analogs of assumptions~\ref{probState:assumpB4}-\ref{probState:assumpB5}
\begin{description}
	\item\namedlabel{probState:assumpB6}{B.6} 
	We have estimates $\hat{h}_{W}: \mathbb{R}^{W} \to \mathbb{R}$ that are non-decreasing in their arguments such that
	\[
	\mathbb{E}[ \hat{h}_{W}(\rho_{j}^{2},\ldots,\rho_{j-W+1}^{2}) ] \geq \rho^{2}.
	\]
	\item\namedlabel{probState:assumpB7}{B.7} 
	There exists absolute constants $\{a_{i}\}_{i=1}^{W}$ for any fixed \an{$W\ge 1$} such that $\forall \bm{p},\bm{q} \in \mathbb{R}^{W}_{\geq 0}$
	\[
	|\hat{h}_{W}(p_{1}^{2},\ldots,p_{W}^{2}) - \hat{h}_{W}(q_{1}^{2},\ldots,q_{W}^{2})| \leq \sum_{i=1}^{W} a_{i} |p_{i}^{2} - q_{i}^{2}|.
	\]
\end{description}
For example, if $\rho_{i} \overset{\text{iid}}{\sim} \text{Unif}[0,\rho]$, then
\[
\hat{h}_{W}\left( \rho_{i}^{2},\rho_{i+1}^{2},\ldots,\rho_{i+W-1}^{2} \right) = \frac{W+2}{W} \max\{ \rho_{i}^{2},\rho_{i+1}^{2},\ldots,\rho_{i+W-1}^{2} \}
\]
is an estimate of $\rho$ with the required properties with $b_{i} = \frac{W+2}{W}$. In this case, we compute the max over a sliding window and then average the maximums. This estimate will not blow up but will eventually upper bound $\rho$ as we will see later.

Given an estimate satisfying assumptions~\ref{probState:assumpB4}-\ref{probState:assumpB5}, we compute
\[
\bar{\rho}^{(i)} = \sqrt{\hat{h}_{W}(\tilde{\rho}_{i}^{2},\tilde{\rho}_{i-1}^{2},\ldots,\tilde{\rho}_{i-W+1}^{2})}.
\]
Under assumptions~\ref{probState:assumpB1}-\ref{probState:assumpB3} and assumptions~\ref{probState:assumpB6}-\ref{probState:assumpB7}, we can then show that
\begin{equation}
\label{estRho:combIneq:L2}
\hat{\rho}_{n} = \sqrt{\frac{1}{n-1} \sum_{i=2}^{n} \left( \bar{\rho}^{(i)} \right)^{2} }
\end{equation}
eventually upper bounds $\rho$.

\begin{thm}
	\label{rho_conc_ineq_sq}
	Provided that assumptions~\ref{probState:assumpB1}-\ref{probState:assumpB3} and \ref{probState:assumpB6}-\ref{probState:assumpB7} hold and our sequence $\{t_{n}\}$ is chosen such that the sum
	\iftoggle{useTwoColumn}{
	\begin{align}
	\sum_{n=2}^{\infty} &\left( \exp\left\{ - \frac{(n-W)^{2}t_{n}^{2}}{18(n-1) \text{diam}^{2}(\xSp)\left( \sum_{j=1}^{W} a_{j} \right)^{2}}\right\} \right. \nonumber \\
	&\left. \;\;\;\;\;\;\;\;\;\;\; + 2\exp\left\{ - \frac{m^2(n-W)^2 t_{n}^{2}}{72 (n-1) G^{2}\left( \sum_{j=1}^{W} a_{j} \right)^{2}} \right\} \right) \nonumber
	\end{align}
	}{
	\[
	\sum_{n=2}^{\infty} \left( \exp\left\{ - \frac{(n-W)^{2}t_{n}^{2}}{18(n-1) \text{diam}^{2}(\xSp)\left( \sum_{j=1}^{W} a_{j} \right)^{2}}\right\} + 2\exp\left\{ - \frac{m^2(n-W)^2 t_{n}^{2}}{72 (n-1) G^{2}\left( \sum_{j=1}^{W} a_{j} \right)^{2}} \right\} \right)
	\]
	}
	is finite, it holds that for all $n$ large enough 
\[
\sqrt{\left(\hat{\rho}_{n}\right)^{2} + \left( \frac{n-1}{n-W} \sum_{j = 1}^{W} a_{j} \right) \tilde{D}_{n} + t_{n}} \geq \rho
\]
with $\tilde{D}_{n}$ from \cref{rho_conc_eq_sq}.
\end{thm}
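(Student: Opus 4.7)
The plan is to mimic the proof of Theorem \ref{rho_conc_ineq} but working with squared quantities throughout, in the same way that Theorem \ref{rho_conc_eq_sq} relates to Theorem \ref{rho_conc_eq}. First I would introduce the second-draw iterates $\tilde{\bx}_i$ as in \cref{estRho:secondSampleSGD} and the associated one-step estimates $\tilde{\rho}_i^{(2)}$ from \cref{estRho:rhoTwoEstimate}. Because $\tilde{\bx}_i$ is conditionally independent of the samples used to evaluate $\tilde{G}_i$, Lemma \ref{estRho:dirEstJustLemma} yields $\mathbb{E}[(\tilde{\rho}_i^{(2)})^2 \mid \mathcal{F}_{i-1}] \geq \rho_i^2$. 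Applying Assumption \ref{probState:assumpB6} to the sliding window of second-draw estimates then gives
\[
\mathbb{E}\bigl[\,\hat{h}_W\bigl((\tilde{\rho}_i^{(2)})^2,\ldots,(\tilde{\rho}_{i-W+1}^{(2)})^2\bigr)\bigr] \geq \rho^2,
\]
so that each averaged summand in the second-draw version of $(\hat{\rho}_n)^2$ has expectation at least $\rho^2$.

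Next I would establish concentration of the averaged second-draw estimate $\frac{1}{n-1}\sum_{i=2}^n (\bar{\rho}^{(i),(2)})^2$, where $(\bar{\rho}^{(i),(2)})^2 = \hat{h}_W((\tilde{\rho}_i^{(2)})^2,\ldots,(\tilde{\rho}_{i-W+1}^{(2)})^2)$, around its expectation. These summands form a short-range (window-$W$) dependent sequence bounded by a constant times $\text{diam}^2(\xSp)\sum_j a_j$, so the dependent sub-Gaussian inequality of \cite{Antonini2005} used in Theorems \ref{rho_conc_eq} and \ref{rho_conc_eq_sq} applies and yields the first exponential in the hypothesis on $\{t_n\}$. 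The second exponential arises, as before, from concentrating the gradient-norm contributions in the $\tilde{\rho}_i^{(2)}$'s; the explicit $(n-W)^2/(n-1)$ factor in the exponents reflects the window loss near the left boundary. Combined with Borel--Cantelli, this gives $\frac{1}{n-1}\sum_{i=2}^n (\bar{\rho}^{(i),(2)})^2 \geq \rho^2 - t_n$ for all $n$ large enough, almost surely.

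The third step is to relate the actual estimate $\hat{\rho}_n$ to its second-draw version through Assumption \ref{probState:assumpB7}:
\[
\bigl| (\bar{\rho}^{(i)})^2 - (\bar{\rho}^{(i),(2)})^2 \bigr| \leq \sum_{j=0}^{W-1} a_{j+1} \bigl| \tilde{\rho}_{i-j}^2 - (\tilde{\rho}_{i-j}^{(2)})^2 \bigr|.
\]
Here is the key place where the $L_2$ version differs from the Euclidean one: I would factor
\[
\bigl|\tilde{\rho}^2 - (\tilde{\rho}^{(2)})^2\bigr| = \bigl|\tilde{\rho} - \tilde{\rho}^{(2)}\bigr|\cdot\bigl|\tilde{\rho} + \tilde{\rho}^{(2)}\bigr| \leq 2\,\text{diam}(\xSp)\,\bigl|\tilde{\rho} - \tilde{\rho}^{(2)}\bigr|,
\]
which explains the identity $\tilde{D}_n = 2\,\text{diam}(\xSp)\,D_n$. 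Each first-order difference $|\tilde{\rho}_{i-j} - \tilde{\rho}_{i-j}^{(2)}|$ is then controlled as in the proof of Theorem \ref{rho_conc_eq} via Assumptions \ref{probState:assumpB1}--\ref{probState:assumpB2}, producing the $C_i(\numIter_i)$ and $\sqrt{\sigma/\numIter_i}$ contributions inside $D_n$. Since each $\tilde{\rho}_j$ appears in up to $W$ windows, averaging gives the prefactor $\frac{n-1}{n-W}\sum_{j=1}^W a_j$.

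Combining the three pieces, for all $n$ sufficiently large, almost surely
\[
(\hat{\rho}_n)^2 + \left(\frac{n-1}{n-W}\sum_{j=1}^W a_j\right)\tilde{D}_n + t_n \;\geq\; \rho^2,
\]
and taking square roots yields the claim. The main obstacle will be the bookkeeping in the third step: I need the window-overlap constant to be tight enough to give exactly the stated prefactor, and I need to confirm that the dependent sub-Gaussian inequality applies uniformly when $\hat{h}_{\min\{W,i-1\}}$ has a shorter window near $i=2$. The squared factorization $|a^2-b^2|=(a+b)|a-b|$ is the only genuinely new ingredient relative to Theorem \ref{rho_conc_ineq}, and is precisely what forces the factor $2\,\text{diam}(\xSp)$ in passing from $D_n$ to $\tilde{D}_n$.
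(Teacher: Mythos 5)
Your overall architecture is the right one, and it is essentially the paper's: the paper proves this theorem by combining the window bookkeeping of \cref{rho_conc_ineq} (which produces the prefactor $\frac{n-1}{n-W}\sum_{j=1}^{W} a_{j}$ exactly as you describe) with the factorization used in \cref{rho_conc_eq_sq}, namely $|(\hat{\rho}_{n})^{2} - (\hat{\rho}_{n}^{(3)})^{2}| \leq 2\,\text{diam}(\xSp)\left( |\hat{\rho}_{n} - \hat{\rho}_{n}^{(2)}| + |\hat{\rho}_{n}^{(2)} - \hat{\rho}_{n}^{(3)}| \right)$, which is precisely your $|a^{2}-b^{2}| \leq 2\,\text{diam}(\xSp)\,|a-b|$ observation and your explanation of $\tilde{D}_{n} = 2\,\text{diam}(\xSp) D_{n}$.

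There is, however, one genuine gap in your first step: you work only with the second-draw estimates $\tilde{\rho}_{i}^{(2)}$ and never introduce the exact-gradient estimates $\tilde{\rho}_{i}^{(3)} = \|\tilde{\bx}_{i} - \tilde{\bx}_{i-1}\| + \frac{1}{m}\|\nabla f_{i}(\tilde{\bx}_{i})\| + \frac{1}{m}\|\nabla f_{i-1}(\tilde{\bx}_{i-1})\|$. Your inference from $\mathbb{E}[(\tilde{\rho}_{i}^{(2)})^{2} \mid \mathcal{F}_{i-1}] \geq \rho_{i}^{2}$ to $\mathbb{E}[\hat{h}_{W}((\tilde{\rho}_{i}^{(2)})^{2},\ldots)] \geq \rho^{2}$ is not licensed by Assumption~\ref{probState:assumpB6} alone: that assumption gives only monotonicity of $\hat{h}_{W}$ together with $\mathbb{E}[\hat{h}_{W}(\rho_{j}^{2},\ldots)] \geq \rho^{2}$, and a domination that holds merely in conditional mean cannot be pushed through a nonlinear monotone function (the step $\mathbb{E}[\hat{h}_{W}(X_{1},\ldots,X_{W})] \geq \hat{h}_{W}(\mathbb{E}[X_{1}],\ldots,\mathbb{E}[X_{W}])$ would additionally require convexity of $\hat{h}_{W}$, which happens to hold for the scaled-max example but is nowhere assumed). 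The paper's route avoids this entirely: by \cref{estRho:dirEstJustLemma} the domination $\tilde{\rho}_{i}^{(3)} \geq \rho_{i}$ holds \emph{surely}, not just in mean, so monotonicity of $\hat{h}_{W}$ applies pointwise, giving $\hat{h}_{W}((\tilde{\rho}_{i}^{(3)})^{2},\ldots) \geq \hat{h}_{W}(\rho_{i}^{2},\ldots)$ almost surely, after which \ref{probState:assumpB6} and a Hoeffding-type concentration as in \cref{estRho:rho3SqLemma} yield $(\hat{\rho}_{n}^{(3)})^{2} + t_{n} \geq \rho^{2}$ eventually. The same omission damages your bookkeeping downstream: the $\sqrt{\sigma/\numIter_{i}}$ terms in $D_{n}$ do not come from $|\tilde{\rho}_{i} - \tilde{\rho}_{i}^{(2)}|$ (that comparison, via \ref{probState:assumpB1} and the first half of \ref{probState:assumpB2}, contributes only the $(1+M/m)C_{i}$ terms); they come from $|\tilde{\rho}_{i}^{(2)} - \tilde{\rho}_{i}^{(3)}|$, i.e., from $\frac{1}{m}\|\tilde{G}_{i} - \nabla f_{i}(\tilde{\bx}_{i})\|$ and the second half of \ref{probState:assumpB2} — and it is precisely the presence of two separate gradient-difference averages, $\|\hat{G}_{i} - \tilde{G}_{i}\|$ and $\|\tilde{G}_{i} - \nabla f_{i}(\tilde{\bx}_{i})\|$, each bounded by $2G$, that produces the factor $2$ in front of the second exponential in the summability condition on $\{t_{n}\}$. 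Reinstating the three-estimate chain $\tilde{\rho}_{i} \to \tilde{\rho}_{i}^{(2)} \to \tilde{\rho}_{i}^{(3)}$ repairs the argument; with that correction, the remainder of your plan goes through as written.
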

\begin{proof}
	The proof in this case is similar to the proof for the equality assumption on $\rho$ in \cref{prob_form:opt_change_eq} and is provided in \Cref{usefulConcIneq}.
\end{proof}

\section{Tracking Analysis with Change in Minimizers Unknown}
\label{withRhoUnknown}

We now examine the case with $\rho$ unknown. We extend the work of \Cref{tracking_crit_rho_known} using the estimate of $\rho$ in \Cref{tracking_est_rho}. Our analysis depends on the following crucial assumptions:
\begin{description}
\item\namedlabel{probState:assumpC1}{C.1} For appropriate sequences $\{t_{n}\}$, for all $n$ sufficiently large it holds that $\hat{\rho}_{n} + t_{n} \geq \rho$ almost surely.
\item\namedlabel{probState:assumpC2}{C.2} 
The bound $b(d_{0},\numIter_{n})$ \an{defined in Assumption~\ref{probState:assump4}} factors as $b(d_{0},\numIter_{n}) = \alpha(\numIter_{n}) d_{0}^{2} + \beta(\numIter_{n})$.
\end{description}
\cw{We have demonstrated that Assumption~\ref{probState:assumpC1} holds for the direct estimate of $\rho$ in Section~\ref{tracking_est_rho}. Section~\ref{bBounds} has some examples of $b(d_{0},\numIter)$ which factor as in C.2. For many variants of SGD, the bound $b(d_{0},\numIter)$ has one term $\alpha(\numIter_{n})d_{0}^{2}$ that controls how the optimization algorithms forgets its initial condition and another term $\beta(\numIter_{n})$ that controls the asymptotic performance.}

In this section, we assume that either the constant change in minimizers condition, \cref{prob_form:opt_change_L2}, or the bounded change in minimizers condition, \cref{prob_form:opt_change_eq_L2}, holds. Our analysis is not affected by which one is true. We use the following result, proved in \Cref{appendix_proofs_rho_unknown}, to derive rules to pick $\numIter_{n}$ when $\rho$ is unknown:
\setcounter{K_rho_unknown}{\value{thm}}
\begin{thm}
	\label{withRhoUnknown:meanGapRhoKnownLemma}
	Under assumptions~\ref{probState:assumpC1}-\ref{probState:assumpC2}, with $\numIter_{n} \geq \numIter^{*}$ for all $n$ large enough where $\numIter^{*}$ is defined in \eqref{K_with_rho_known} we have  
	\[
	\limsup_{n \to \infty} \left( \mathbb{E}[f_{n}(\bx_{n})] - f_{n}(\bx_{n}^{*})  \right) \leq \epsilon
	\]
	almost surely
\end{thm}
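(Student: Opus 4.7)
The plan is to reduce the unknown-$\rho$ case to the known-$\rho$ analysis of Section~\ref{tracking_crit_rho_known:meanAnalysis}. By Assumption~\ref{probState:assumpC1} there is a random index $N_{1}$ past which $\hat{\rho}_{n}+t_{n}\geq\rho$ almost surely, and by hypothesis there is a random $N_{2}$ past which $\numIter_{n}\geq\numIter^{*}$ almost surely; setting $N=\max\{N_{1},N_{2}\}$, both events hold jointly for all $n\geq N$.

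For such $n$, I would apply the conditional bound \eqref{probState:bBoundRand} of Assumption~\ref{probState:assump4} at $d_{0}=\|\bx_{n-1}-\bx_{n}^{*}\|$ (which is $\mathcal{F}_{n-1}$-measurable), then invoke the factorization of Assumption~\ref{probState:assumpC2} together with the monotonicity of $\alpha$ and $\beta$ in $K$ to replace $\alpha(\numIter_{n}),\beta(\numIter_{n})$ by $\alpha(\numIter^{*}),\beta(\numIter^{*})$. The factored form is essential here, because it allows the random $\numIter_{n}$ to be pulled outside of the expectation without becoming entangled with $d_{0}$. Reproducing the $L_{2}$-triangle-inequality-plus-strong-convexity chain from \eqref{sol_sens:L2_d0}, via $\|\bx_{n-1}^{*}-\bx_{n}^{*}\|\leq\rho$ from \eqref{prob_form:opt_change_L2}, then yields the scalar recursion
\[
\epsilon_{n}\leq g(\epsilon_{n-1}),\qquad g(x)\triangleq b\!\left(\sqrt{2x/m}+\rho,\;\numIter^{*}\right),
\]
where $\epsilon_{n}:=\mathbb{E}[f_{n}(\bx_{n})]-f_{n}(\bx_{n}^{*})$ and the recursion holds for all $n\geq N$.

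The final step is to analyze this scalar recursion. The map $g$ is continuous and non-decreasing, and by the very definition of $\numIter^{*}$ in \eqref{K_with_rho_known} one has $g(\epsilon)\leq\epsilon$, so $[0,\epsilon]$ is forward invariant under $g$. Since the $\epsilon_{n}$ are bounded above (through $\text{diam}(\xSp)<+\infty$), and for the factored SGD-style bounds targeted here the asymptotic slope satisfies $2\alpha(\numIter^{*})/m<1$ --- a consequence of $\rho>0$ or $\beta(\numIter^{*})>0$ in the inequality defining $\numIter^{*}$ --- monotonicity of the iterates then forces $\limsup_{n\to\infty}\epsilon_{n}\leq\epsilon$.

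The main obstacle I anticipate is precisely this final step: the one-sided inequality $g(\epsilon)\leq\epsilon$ alone does not preclude a spurious fixed point of $g$ in $(\epsilon,+\infty)$ at which the iterates could in principle stagnate, so the argument must extract enough strictness from the discrete minimization defining $\numIter^{*}$ (or equivalently from the asymptotic behaviour of $b(\cdot,\numIter^{*})$) to ensure the iterates are actually drawn into $[0,\epsilon]$. A secondary technical point is the interchange of expectations with the random stopping time $N$ and with the $\mathcal{F}_{n-1}$-measurable $\numIter_{n}$, but this is benign because $\epsilon_{n}$ is dominated by a deterministic constant and dominated convergence handles the transient $n<N$ contribution.
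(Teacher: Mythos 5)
Your overall route---reduce to the known-$\rho$ recursion at sample size $\numIter^{*}$ and then analyze the scalar map $g(v)=b\bigl(\sqrt{2v/m}+\rho,\numIter^{*}\bigr)$---is the same as the paper's, but there is a genuine gap where you assert that the recursion $\epsilon_{n}\leq g(\epsilon_{n-1})$ ``holds for all $n\geq N$.'' The quantity $\epsilon_{n}=\mathbb{E}[f_{n}(\bx_{n})]-f_{n}(\bx_{n}^{*})$ is deterministic while $N$ is random, so the statement does not typecheck: the unconditional expectation defining $\epsilon_{n}$ always integrates over the event $\{\tilde{N}>n\}$, on which $\numIter_{n}$ may be far below $\numIter^{*}$, and this event is not $\mathcal{F}_{n-1}$-measurable, so you cannot simply replace $\numIter_{n}$ by $\numIter^{*}$ inside the expectation. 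Your proposed repair---dominated convergence for the ``transient $n<N$ contribution''---does not fix this, because the contamination is not a one-time transient: at \emph{every} $n$ the split $\mathbbm{1}_{\{n\geq\tilde{N}\}}+\mathbbm{1}_{\{n<\tilde{N}\}}$ leaves an additive error, and what one actually obtains (this is \eqref{withRhoUnknown:KeyThm:firstRecur} in the paper) is the perturbed recursion $\epsilon_{n}\leq\phi_{\numIter^{*},\rho}(\epsilon_{n-1})+C\,\mathbb{P}\{\tilde{N}>n\}$, with a perturbation that merely tends to zero. This is why the paper studies the fixed points $\bar{\nu}_{\delta}$ of $\phi_{\numIter^{*},\rho}+\delta$ in \cref{withRhoUnknown:fixedPointBasic}, proves $\bar{\nu}_{\delta}\searrow\bar{\nu}_{0}\leq\epsilon$ as $\delta\searrow 0$, and only then concludes; your unperturbed fixed-point analysis skips exactly this layer.

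The obstacle you flag at the end---that $g(\epsilon)\leq\epsilon$ alone permits a spurious fixed point in $(\epsilon,+\infty)$---is real, and you correctly name the ingredient that kills it (the slope condition $\frac{2}{m}\alpha(\numIter^{*})<1$, which the paper forces via the contradiction $\phi_{\numIter^{*}}(\epsilon)>\epsilon$ when $\frac{2}{m}\alpha(\numIter^{*})=1$, using $\rho>0$), but your proposal leaves it as an anticipated difficulty rather than resolving it. The paper closes it with strict concavity of $\phi_{\numIter^{*},\rho}$ and Kennan's theorem (\cref{withRhoUnknown:kennanTheorem}): there is a \emph{unique} positive fixed point, the derivative there is strictly below one (\cref{withRhoUnknown:fixedPointBasic}), and the iteration therefore contracts to it from any starting value (\cref{withRhoUnknown:fixedPointIter}). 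Note it is this uniqueness, not ``monotonicity of the iterates,'' that rules out stagnation above $\epsilon$---monotone convergence of $g^{(n)}(v)$ to some fixed point exceeding $\epsilon$ would otherwise be entirely consistent with your stated hypotheses. One point in your favor: your use of \eqref{probState:bBoundRand} together with the factorization in \ref{probState:assumpC2} to handle the $\mathcal{F}_{n-1}$-measurable $d_{0}$ matches the paper's discussion around \eqref{probState:factorAssump}, and you correctly sense that the failure of \eqref{withRhoUnknown:factorFail} for random $\numIter_{n}$ is the crux---it is precisely what forces the indicator-splitting device you are missing.
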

\begin{proof}
See \Cref{appendix_proofs_rho_unknown}.
\end{proof}

\subsection{Update Past \MeanGap{} Bounds}
\label{withRhoUnknown:updatePast}

We first consider updating all past \meangap{} bounds as we go. At time $n$, we plug-in $\hat{\rho}_{n-1} + t_{n-1}$ in place of $\rho$ and follow the analysis of \Cref{tracking_crit_rho_known}. Define
\begin{eqnarray}
\hat{\epsilon}_{i}^{(n)} &=& b\left(   \sqrt{\frac{2}{m} \hat{\epsilon}_{i-1}^{(n)}}  + \hat{\rho}_{n-1} + t_{n-1} ,\numIter_{i} \right) \;\;\;\; i=1,\ldots,n. \nonumber
\end{eqnarray}
If it holds that $\hat{\rho}_{n-1} + t_{n-1} \geq \rho$, then ${\mathbb{E}\left[ f_{n}(\bx_{n})  \right] - f_{n}(\bx_{n}^{*}) \leq \hat{\epsilon}_{n}^{(i)}}$ for 
${i=1,\ldots,n}$. Assumption~\ref{probState:assumpC1} guarantees that this holds for all $n$ large enough almost surely. We can thus set $\numIter_{n}$ equal to
\[
\numIter_{n} = \min\left\{ \numIter \;\Bigg|\;  b\left(  \sqrt{ \frac{2}{m} \max\{\hat{\epsilon}^{(n-1)}_{n-1},\epsilon\} } + \hat{\rho}_{n-1} + t_{n-1}  , \numIter  \right) \leq \epsilon \right\}
\]
for all $n \geq 3$ to achieve \meangap{} $\epsilon$. The maximum in this definition ensures that when $\hat{\rho}_{n-1} + t_{n-1} \geq \rho$, $\numIter_{n} \geq \numIter^{*}$ with $\numIter^{*}$ from \cref{K_with_rho_known}. We can therefore apply \cref{withRhoUnknown:meanGapRhoKnownLemma}.

\subsection{Do Not Update Past \MeanGap{} Bounds}
\label{withRhoUnknown:doNotUpdatePast}

Updating all past estimates of the \meangap{} bounds from time $1$ up to $n$ imposes a computational and memory  burden. Suppose that instead for all $n \geq 3$ we set
\begin{equation}
\label{withRhoUnknown:KnChoice}
\numIter_{n} = \min \left\{ \numIter \geq 1 \;\Bigg|\; b\left(  \sqrt{\frac{2\epsilon}{m}} + \hat{\rho}_{n-1} + t_{n-1} , \numIter  \right) \leq \epsilon \right\}.
\end{equation}
This is the same form as the choice in \cref{K_with_rho_known} with $\hat{\rho}_{n-1} + t_{n-1}$ in place of $\rho$. Due to assumption~\ref{probState:assumpC1}, for all $n$ large enough it holds that $\hat{\rho}_{n} + t_{n} \geq \rho$ almost surely. Then by the monotonicity assumption in \ref{probState:assump4}, for all $n$ large enough we would pick $\numIter_{n} \geq \numIter^{*}$ almost surely. We can therefore apply \cref{withRhoUnknown:meanGapRhoKnownLemma}.

\subsection{In High Probability Bounds}
\label{ihpRhoUnknownAnalysis}

We can adopt the same approach as with $\rho$ known by substituting $\hat{\rho}_{n-1} + t_{n-1}$ in place of $\rho$. As soon as $\rho_{n-1} + t_{n-1} \geq \rho$, \cref{ihp_alg} will produce upper bounds of the form
\[
\mathbb{P}\left\{ f_{n}(\bx_{n}) - f_{n}(\bx_{n}^{*}) > t^{(i)} \right\} \leq r_{n}(i) \;\;\;\; i=1,\ldots,N.
\]
Suppose that the set $\{t^{(1)},\ldots,t^{(N)}\}$ contains $t$ at index $i^{*}$. These bounds can in turn be used to select $\numIter_{n}$ to achieve a target $(t,r)$ pair by selecting the smallest $\numIter_{n}$ such that
\[
r_{n}(i^{*}) \leq r.
\]

\section{Experiment}
\label{experiment}

We apply our framework to a  mean-squared vector estimation problem \footnote{In \cite{Wilson2015a}, we apply the framework developed in this paper to a variety of machine learning problems using real data.}. We fix the following signal model:
\[
y_{n} = \fRV_{n}^{\top} \bw_{n} + e_{n}.
\]
Our goal is to estimate $\fRV_{n}$. We consider minimizing the following functions to estimate $\fRV_{n}$
\begin{equation}
\label{experiment:fDef}
f_{n}(\bx) = \mathbb{E}_{\bz_{n} \sim p_{n}}\left[ \frac{1}{2}(y_{n} - \bx^{\top}\bw_{n})^{2}  \right].
\end{equation}
By simple algebraic manipulation, it holds that
\begin{equation}
\label{experiment:fSimpDef}
f_{n}(\bx) = \frac{1}{2}(\bx - \fRV_{n})^{\top} \mathbb{E}[\bw_{n}\bw_{n}^{\top}] (\bx - \fRV_{n}) + \frac{1}{2} \mathbb{E}[e_{n}^{2}].
\end{equation}
It is easy to see then that $\bx_{n}^{*}=\fRV_{n}$. Set
\begin{equation}
\label{experiment:zNDef}
\bz_{n} \triangleq [\bw_{n}^{\top} \; e_{n}]^{\top}
\end{equation}
and define the stochastic gradients
\[
\sgrad{\bx}{\bz_{n}}{n} \triangleq -(y_{n} - \bx^{\top}\bw_{n}) \bw_{n}
\]
which satisfy the required condition in \cref{prob_form:stochGradDef}.  To find an approximation to $\bx_{n}^{*}$, we apply SGD using the inverse step size averaging technique discussed in \Cref{bBounds}\iftoggle{useArxiv}{}{ of the extended version of this paper \cite{Wilson2016a}}.

Let $\bw_{n} \sim \mathcal{N}\left(\bm{0},\frac{\sigma_{\bw}^{2}}{d}\bm{I}\right)$ where $\bw_{n} \in \mathbb{R}^{d}$ and $e_{n} \sim \mathcal{N}(0,\sigma_{e}^{2})$. We assume $\fRV_{n}$ is a deterministic sequence satisfying 
\begin{equation}
\label{experiment:rhoDef}
\|\fRV_{n+1} - \fRV_{n} \|_{2} \leq \rho.
\end{equation}
Since $\bx_{n}^{*} = \fRV_{n}$, the minimizer change condition in \cref{prob_form:opt_change_L2} is satisfied. We use the $\rho$ estimate in \cref{estRho:combEq:euclid}. Note that since $\{\fRV_{n}\}$ is deterministic, we cannot apply a Kalman filter. Furthermore, we suppose that the collection of all $\bw_{n}$, $e_{n}$, and $\fRV_{n}$ over all time instants are independent.

With this choice of model combined with the form of the functions in \cref{experiment:fSimpDef}, it is clear that the functions $f_{n}(\bx)$ are strongly convex with $m = \sigma_{\bw}^{2}/d$ satisfying assumption~\ref{probState:assump2}. By applying the inequality $(a+b) \leq 2a^2 + 2b^{2}$, it follows that
\iftoggle{useTwoColumn}{
\begin{align}
\mathbb{E}&\| \sgrad{\bx}{\bz_{n}}{n} \|_{2}^{2} \nonumber \\
&=  \mathbb{E}\| \sgrad{\bx^{*}}{\bz_{n}}{n} + \left(  \sgrad{\bx}{\bz_{n}}{n} - \sgrad{\bx^{*}}{\bz_{n}}{n} \right) \|_{2}^{2} \nonumber \\
&\leq 2 \mathbb{E}\| \sgrad{\bx^{*}}{\bz_{n}}{n} \|_{2}^{2} + 2 \mathbb{E}\|  \sgrad{\bx}{\bz_{n}}{n} - \sgrad{\bx^{*}}{\bz_{n}}{n} \|_{2}^{2}. \nonumber
\end{align}
}{
\begin{eqnarray}
\mathbb{E}\| \sgrad{\bx}{\bz_{n}}{n} \|_{2}^{2} &=&  \mathbb{E}\| \sgrad{\bx^{*}}{\bz_{n}}{n} + \left(  \sgrad{\bx}{\bz_{n}}{n} - \sgrad{\bx^{*}}{\bz_{n}}{n} \right) \|_{2}^{2} \nonumber \\
&\leq& 2 \mathbb{E}\| \sgrad{\bx^{*}}{\bz_{n}}{n} \|_{2}^{2} + 2 \mathbb{E}\|  \sgrad{\bx}{\bz_{n}}{n} - \sgrad{\bx^{*}}{\bz_{n}}{n} \|_{2}^{2}. \nonumber
\end{eqnarray}
}
For the first term, we have
\begin{eqnarray}
\mathbb{E}\| \sgrad{\bx_{n}^{*}}{\bz_{n}}{n} \|_{2}^{2} &=& \mathbb{E}\| e_{n} \bw_{n} \|_{2}^{2} \nonumber \\
&=& \mathbb{E}[e_{n}^{2}] \mathbb{E}\|\bw_{n}\|_{2}^{2} \nonumber \\
&=& \sigma_{e}^{2} \sigma_{\bw}^{2}  \nonumber
\end{eqnarray}
and for the second term, we have
\iftoggle{useTwoColumn}{
\begin{align}
\mathbb{E}&\| \sgrad{\bx}{\bz_{n}}{n} - \sgrad{\bx^{*}}{\bz_{n}}{n} \|^{2} \nonumber \\
&= \mathbb{E}\left[ \| \bw_{n} \|^{2} (\bx - \bx^{*}) \bw_{n} \bw_{n}^{\top} (\bx - \bx^{*}) \right] \nonumber \\
&\leq \mathbb{E}\left[ \| \bw_{n} \|^{4} \right] \| \bx - \bx^{*}) \|^{2} \nonumber \\
&\leq 3 \sigma_{w}^{4} \| \bx - \bx^{*} \|^{2}. \nonumber
\end{align} 
}{
\begin{eqnarray}
\mathbb{E}\| \sgrad{\bx}{\bz_{n}}{n} - \sgrad{\bx^{*}}{\bz_{n}}{n} \|^{2} &=& \mathbb{E}\left[ \| \bw_{n} \|^{2} (\bx - \bx^{*}) \bw_{n} \bw_{n}^{\top} (\bx - \bx^{*}) \right] \nonumber \\
&\leq& \mathbb{E}\left[ \| \bw_{n} \|^{4} \right] \| \bx - \bx^{*}) \|^{2} \nonumber \\
&\leq& 3 \sigma_{w}^{4} \| \bx - \bx^{*} \|^{2}. \nonumber
\end{eqnarray} 
}

The last inequality follows since $\bw_{n}$ is a centered Gaussian and therefore, it holds that $\mathbb{E}|\bw_{n}|^{4} \leq 3 \mathbb{E}|\bw_{n}|^{2}$ 
This implies that
\[
\mathbb{E}\| \sgrad{\bx}{\bz_{n}}{n} \|_{2}^{2} \leq 2 \sigma_{e}^{2} \sigma_{\bw}^{2} +  6 \sigma_{w}^{4} \| \bx - \bx^{*} \|_{2}^{2}.
\]
Therefore, for assumption~\ref{probState:assump5}, we can set
\begin{eqnarray}
A &=& 2 \sigma_{e}^{2} \sigma_{\bw}^{2} \nonumber \\
B &=& 6 \sigma_{w}^{4}. \nonumber
\end{eqnarray}

Putting it together, we have the parameters summarized in \Cref{experiment:paramTable}.
\begin{table}[!ht]
	\centering
	\begin{tabular}{|c|c|}
		\hline
		Parameter & Value \\
		\hline
		$m$ & $\sigma_{\bw}^{2}/d$ \\
		$A$ & $2 \sigma_{e}^{2} \sigma_{\bw}^{2}$\\
		$B$ & $6\sigma_{\bw}^{4}$\\
		\hline
	\end{tabular}
	\caption{Parameter Table}
	\label{experiment:paramTable}
\end{table}

\noindent For this simulation, we choose $d = 2$, $\sigma_{w}^{2} = 0.5$, $\sigma_{e}^{2} = 0.5$, and $\rho = 1$.

\subsection{Mean Tracking Criterion}
First, we assume that $\rho$ and all the parameters in \Cref{experiment:paramTable} are known. We focus on the mean tracking criterion in \cref{prob_form:L2_crit}. \Cref{fig:b_eps_vs_N_tradeoff} shows the trade-off for the optimal $\epsilon$ versus $\numIter^{*}$ defined in \cref{K_with_rho_known}. Any pair $(\epsilon,\numIter)$ located above this curve can be achieved, in the sense that by setting $\numIter_{n} = \numIter$, we achieve
\[
\limsup_{n \to \infty} \left( \mathbb{E}[f_{n}(\bx_{n})] - f_{n}(\bx_{n}^{*}) \right) \leq \epsilon.
\]
\begin{figure}[!h]
	\normalsize
	\centering
	\includegraphics[width=\myfiguresize]{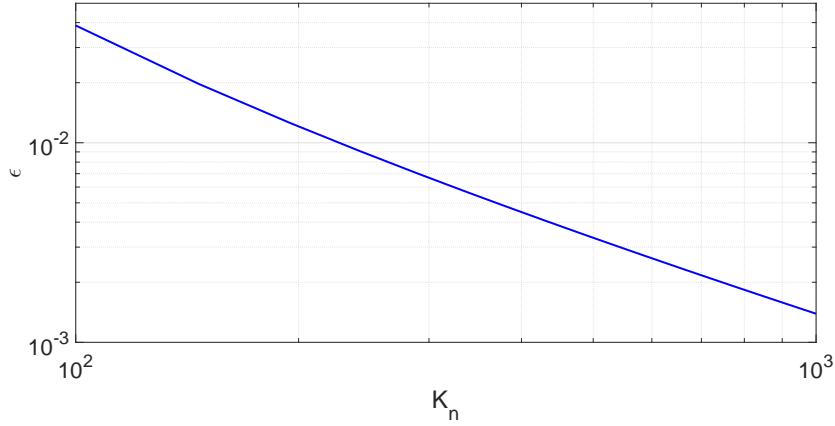}
	\caption{Mean tracking accuracy ($\epsilon$) vs. Number of samples ($\numIter_{n}$)}
	\label{fig:b_eps_vs_N_tradeoff} \medskip
\end{figure}

Next, we examine the case where $\rho$ and the parameters in \Cref{experiment:paramTable} are unknown. We estimate $\rho$ using the techniques introduced in \Cref{tracking_est_rho}, specifically \cref{ineqCond:basicEst}, select $\numIter_{n}$ using the rule in \cref{withRhoUnknown:KnChoice}, and estimate the parameters using the techniques in \Cref{parameterEstimation}\iftoggle{useArxiv}{}{of the extended version of this paper \cite{Wilson2016a}}. We target several different values of the mean tracking accuracy $\epsilon$ from \cref{prob_form:L2_crit}, including $0.001$, $0.01$, and $0.03$. For the problem in this section, we can compute an estimate of the mean tracking accuracy to evaluate our methods. First, we have $f_{n}(\bx_{n}^{*}) = \frac{1}{2} \sigma_{e}^{2}$. Second, for the sake of evaluation, we draw additional samples $\{\tilde{\bz}_{n}(\iterIndex)\}_{\iterIndex=1}^{T_{n}} \overset{\text{iid}}{\sim} p_{n}$ and compute
\[
\frac{1}{T_{n}} \frac{1}{2} \sum_{\iterIndex=1}^{T_{n}} \left( \tilde{y}_{n}(\iterIndex) - \bx_{n}^{\top} \tilde{\bw}_{n}(\iterIndex) \right)^{2}
\]
to estimate $f_{n}(\bx_{n})$. With these two pieces, we can estimate the tracking accuracy by computing
\begin{equation}
\label{experiment:meanCritEstTech}
\frac{1}{T_{n}} \frac{1}{2} \sum_{\iterIndex=1}^{T_{n}} \left( \tilde{y}_{n}(\iterIndex) - \bx_{n}^{\top} \tilde{\bw}_{n}(\iterIndex) \right)^{2} - \frac{1}{2} \sigma_{e}^{2}.
\end{equation}
\Cref{experiment:estMeanCrit} shows an estimate of the actual achieved mean tacking accuracy for three different $\epsilon$ \meangap{} targets averaged over $n=1$ to $100$. In all cases, we meet our \meangap{} target on average.

\begin{table}[!ht]
	\centering
	\begin{tabular}{|c|c|}
		\hline
		$\epsilon$ target & $\epsilon$ Estimate \\
		\hline
		0.001 & $0.0008 \pm 0.0002$ \\
		\hline
		0.01 & $0.0073 \pm 0.0012$ \\
		\hline
		0.03 & $0.022 \pm 0.0022$ \\
		\hline
	\end{tabular}	
	\caption{Estimate of \meangap{}}
	\label{experiment:estMeanCrit}
\end{table}

\Cref{experiment:KnSel} shows the selected number of samples $\numIter_{n}$ for each mean tracking error target $\epsilon$. In all cases we start from an insufficient number of samples $\numIter_{1} = \numIter_{2} = 50$. Due to the guarantees of \Cref{withRhoUnknown:meanGapRhoKnownLemma}, eventually we compensate for this initial bad choice to select $\numIter_{n}$ large enough. This process can be seen in \Cref{experiment:KnSel} by the spikes in $\numIter_{n}$ for small $n$ to ``catch up" to the correct $\numIter^{*}$. For larger $n$, the choice of $\numIter_{n}$ settles down and does not vary greatly. Finally, as expected, for smaller choices of $\epsilon$, $\numIter_{n}$ is larger.

\Cref{experiment:rhoEst} shows the estimate of $\rho$. Our estimates of $\rho$ upper bound the true value of $\rho = 1$ as desired. The initial spike in the estimates of $\rho$ may be due to form of the $\hat{h}_{W}$ function in \cref{estRho:euclid:hFunc} with $W=4$. Before we have four one step estimates of $\rho$ to plug in to $\hat{h}_{4}$, we use $\hat{h}_{1},\hat{h}_{2},\hat{h}_{3}$ per \eqref{estRho:euclid:hFunc}. The scaling factor in front of the maximum for these functions is $2, \frac{3}{2},\frac{4}{3}$ before settling down to $\frac{5}{4}$. The larger scaling factors combined with the small number of one step $\rho$ estimates results in an initial spike in the estimate of $\rho$. With more one step estimates of $\rho$, the overall estimate settles down. Finally, with a smaller mean tracking error target, we produce a tighter estimate of $\rho$. 

\Cref{experiment:meanTracking} shows the estimate $\hat{\epsilon}_{i,n}$ of the mean tracking error achieved computed by updating the past. As mentioned above, we have an insufficient initial choice of $\numIter_{1}$ and $\numIter_{2}$, which causes initial spikes in the estimate of mean tracking error. Our choice of $\numIter_{n}$ drive these mean tracking error estimates below their target values of $\epsilon$.
\begin{figure}[!h]
	\centering
	\includegraphics[width=\myfiguresize]{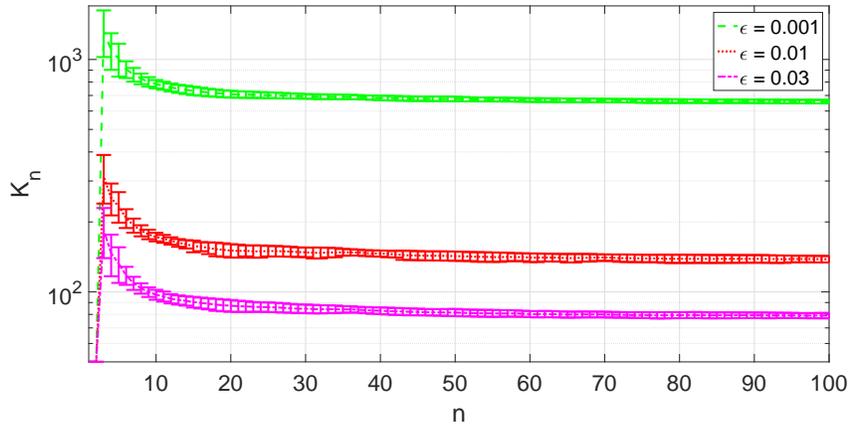}
	\caption{Number of samples ($\numIter_{n}$)}
	\label{experiment:KnSel} \medskip
\end{figure}

\begin{figure}[!h]
	\centering
	\includegraphics[width=\myfiguresize]{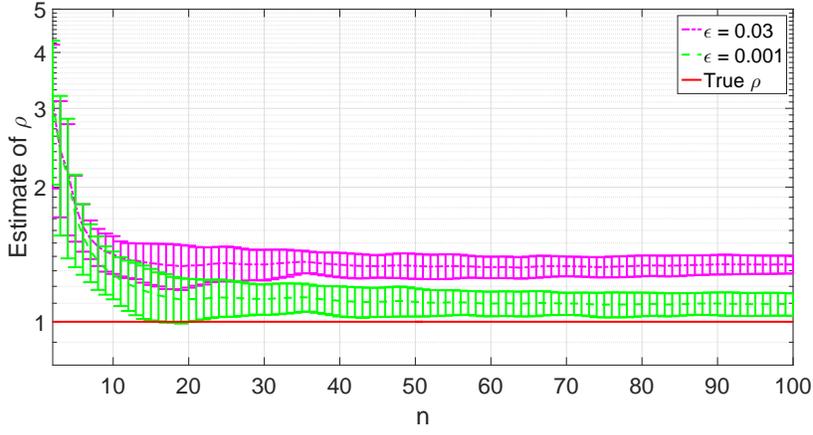}
	\caption{Estimate of change in minimizers ($\rho$)}
	\label{experiment:rhoEst} \medskip
\end{figure}

\begin{figure}[!h]
	\centering
	\includegraphics[width=\myfiguresize]{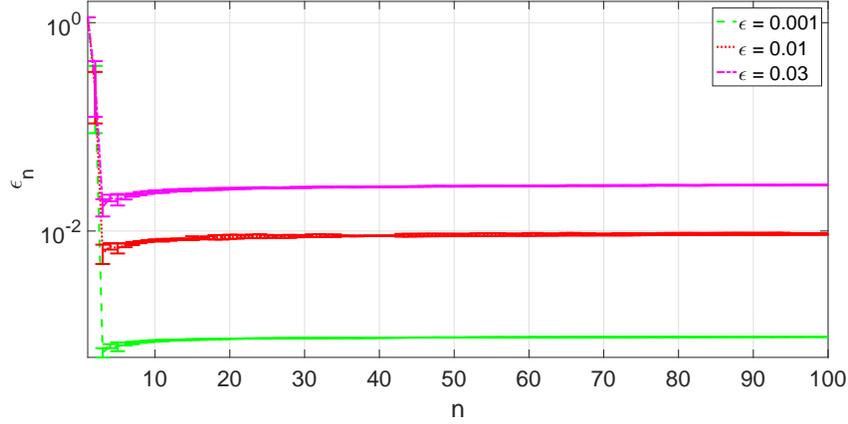}
	\caption{Estimate of mean tracking accuracy}
	\label{experiment:meanTracking} \medskip
\end{figure}

\subsection{IHP Tracking Criterion}
\label{ihpExperiment}

\Cref{fig:r_vs_eps_ihp} plots $r$ vs $\epsilon$ for several values of $\numIter$ by applying the IHP algorithm. The IHP bounds appear to be loose in general as we need fairly large values of $\numIter$ to get non-trivial bounds for reasonable $\epsilon$ and small $r$. The looseness of these bounds is not surprising, since we are only using the first moment of the tracking error to bound.

\begin{figure}[!h]
	\normalsize
	\centering
	\includegraphics[width=\myfiguresize]{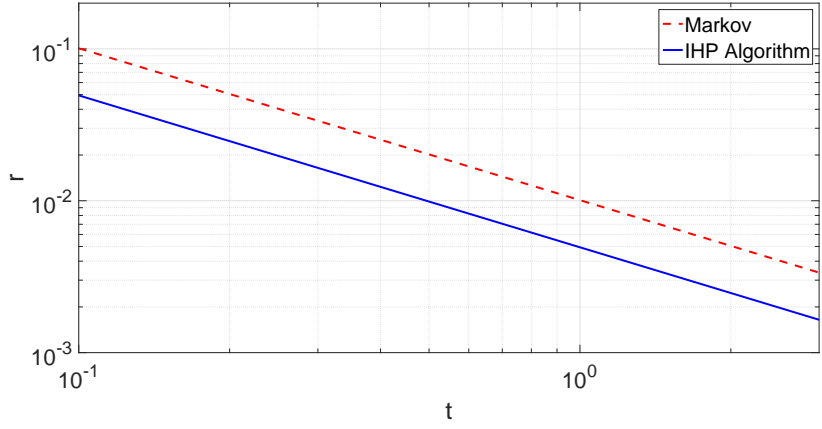}
	\caption{In high probability criterion: $r$ vs t}
	\label{fig:r_vs_eps_ihp} \medskip
\end{figure} 

We choose $\numIter_{n}$ by targeting $t = 0.1$ and $r = 0.25$. \Cref{fig:ihp_test} shows the empirical probability that $f_{n}(\bx_{n}) - f_{n}(\bx_{n}^{*}) > t$. As mentioned above, we can compute $f_{n}(\bx_{n}) - f_{n}(\bx_{n}^{*})$ exactly, so we can calculate the fraction of the time that the loss violates the $t = 0.1$ constraint. The empirical probability that $f_{n}(\bx_{n}) - f_{n}(\bx_{n}^{*}) > t$
satisfies our target value of $r = 0.25$ to within the error bars.
\begin{figure}[!h]
	\normalsize
	\centering
	\includegraphics[width=\myfiguresize]{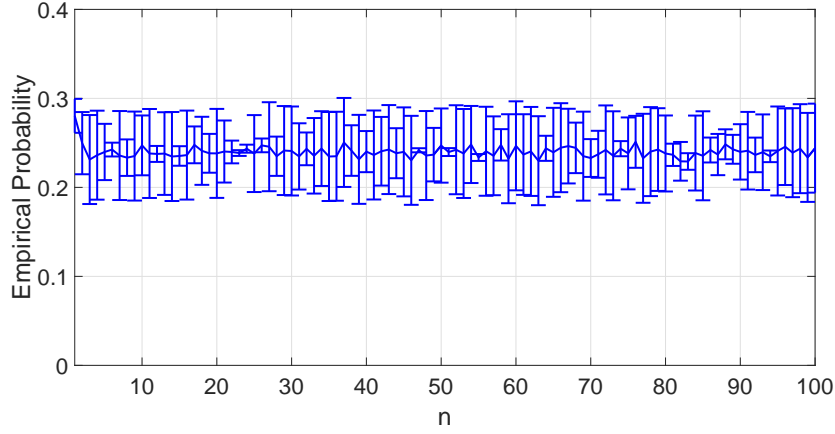}
	\caption{Empirical Probability}
	\label{fig:ihp_test} \medskip
\end{figure}

\subsection{Kalman Filter Comparison}
\label{kalmanComparison}

We now consider a slight modification of our model, so that we can apply the Kalman filter. As mentioned above, since we assume that $\fRV_{n}$ is generated as a deterministic sequence, we cannot apply the Kalman filter. In this section, we instead assume that
\[
\fRV_{n} - \fRV_{n-1} \sim \mathcal{N}(\bm{0},\sigma^{2}\bm{I})
\]
and $\fRV_{1}$ is fixed. Then it holds that
\[
\| \fRV_{n} - \fRV_{n-1} \|_{L_{2}} \leq \sigma \triangleq \rho.
\]
We satisfy \eqref{prob_form:opt_change_L2} and use the estimate of $\rho$ in \eqref{estRho:combEq:L2}.

To apply the Kalman filter, we take $\fRV_{n}$ to be the state of the system. The state evolution equation is given by
\[
\fRV_{n}(\iterIndex) = \begin{cases}
\fRV_{1}(1), & \text{fixed} \\
\fRV_{n-1}(\numIter_{n-1}) + \zeta_{n}, & \iterIndex = 1 \\
\fRV_{n}(\iterIndex-1), & 1 < \iterIndex \leq \numIter_{n}
\end{cases} 
\]
with $\zeta_{n} \sim \mathcal{N}(\bm{0},\sigma^{2} \bm{I})$. The observation equation is given by the pair $(\bw_{n}(\iterIndex),y_{n}(\iterIndex))$ with
\[
y_{n}(\iterIndex) = \fRV_{n}^{\top}(\iterIndex) \bw_{n}(\iterIndex) + e_{n}(\iterIndex).
\]
Let $\hat{\fRV}_{n}(\iterIndex|\tilde{\iterIndex})$ be the estimate of $\fRV_{n}$ at time $\iterIndex$ of epoch $n$ given all the information up to $\tilde{\iterIndex}$ with $\iterIndex \geq \tilde{\iterIndex}$. Let $P_{n}(\iterIndex|\tilde{\iterIndex})$ be the estimate of the covariance. The prediction equations for the state estimate and covariance estimate are given by \cite{Haykin2002}
\begin{eqnarray}
\hat{\fRV}_{n}(\iterIndex|\iterIndex-1) &=& \hat{\fRV}_{n}(\iterIndex-1|\iterIndex-1) \nonumber \\
P_{n}(\iterIndex|\iterIndex - 1) &=& P_{n}(\iterIndex - 1|\iterIndex - 1) + \sigma^{2} \bm{I} \mathbbm{1}_{\{\iterIndex = 1\}}.
\end{eqnarray}
The update equations are given by
\begin{eqnarray}
\hat{\fRV}_{n}(\iterIndex|\iterIndex) &=& \hat{\fRV}_{n}(\iterIndex|\iterIndex-1) + \bm{G}_{n}(\iterIndex) (y_{n}(\iterIndex) - \hat{\fRV}_{n}^{\top}(\iterIndex|\iterIndex-1) \bw_{n}(\iterIndex)) \nonumber \\
P_{n}(\iterIndex|\iterIndex) &=& (\bm{I} - G_{n}(\iterIndex) \bw_{n}^{\top}(\iterIndex)) P_{n}(\iterIndex|\iterIndex-1) \nonumber \\
G_{n}(\iterIndex) \!\!\! &=& \!\!\! P_{n}(\iterIndex|\iterIndex-1) \bw_{n}(\iterIndex) \left( \sigma_{e}^{2} + \bw_{n}^{\top}(\iterIndex) P_{n}(\iterIndex|\iterIndex-1) \bw_{n}(\iterIndex)  \right)^{-1} \nonumber
\end{eqnarray}
where $G_{n}(\iterIndex)$ is the Kalman gain. We have the initial conditions
\begin{eqnarray}
\hat{\fRV}_{n}(1|0) &=& \hat{\fRV}_{n-1}(\numIter_{n-1}|\numIter_{n-1}) \nonumber \\
P_{n}(1|0) &=& P_{n-1}(\numIter_{n-1}|\numIter_{n-1}). \nonumber
\end{eqnarray}

\figurename{}~\ref{experiment:kalmanComparison} shows a comparison of the Kalman filter against our SGD based approach both with exact and mismatched parameters for the Kalman filter. Table~\ref{experiment:kalmanCompare} uses the technique from \eqref{experiment:meanCritEstTech} to estimate the \meangap{} for all three methods. The Kalman filter receives the number of samples $\numIter_{n}$ chosen by the SGD approach. With correct parameters for the Kalman filter, both methods achieve similar performance, but the SGD method is able to control its desired accuracy. With incorrect parameters, the Kalman filter's performance is considerably worse.

\begin{table}[!ht]
	\centering
	\begin{tabular}{|c|c|}
		\hline
		Method & \MeanGap{} Estimate \\
		\hline
		Direct Estimate & $1.9 \times 10^{-2} \pm 1.1 \times 10^{-3}$ \\
		\hline
		Kalman Filter & $1.8 \times 10^{-2} \pm 5.7 \times 10^{-3}$ \\
		\hline
		Kalman Filter - Mismatch & $9.4 \times 10^{-2} \pm 5.3 \times 10^{-2}$ \\
		\hline
	\end{tabular}	
	\caption{Kalman Filter Comparison}
	\label{experiment:kalmanCompare}
\end{table}

\begin{figure}[!h]
	\normalsize
	\centering
	\includegraphics[width=\myfiguresize]{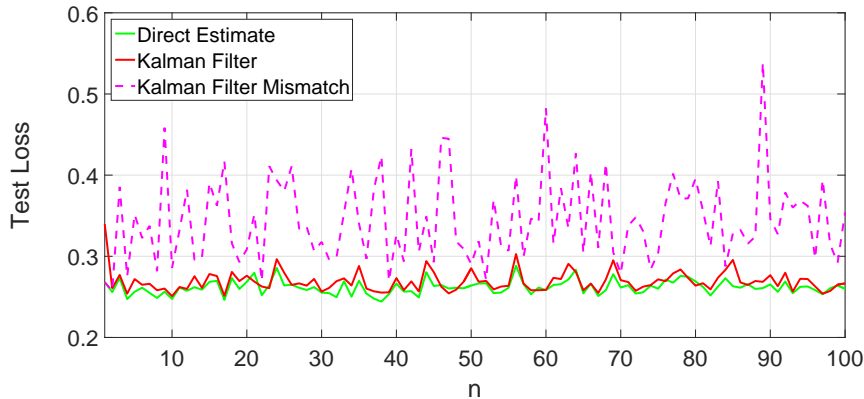}
	\caption{Comparison of Kalman filter and our approach}
	\label{experiment:kalmanComparison} \medskip
\end{figure} 

\section{Conclusion}
We have developed a framework for solving a sequence of slowly changing stochastic optimization problems to within a target accuracy in a mean sense at each time step. In an extended version of this paper \cite{Wilson2016a}, we also consider meeting the target in a high probability sense. We used an estimate of the change in the minimizers, combined with properties of the chosen optimization algorithm, to select the number of samples needed to meet a given criterion. We demonstrated through simulations that our approach works well. 

There are a number of avenues for further research in this area, including finding alternative estimation schemes for $\rho$, allowing for occasional abrupt changes in the optimizers, and incorporating a cost budget for samples used in the stochastic optimization. 

\bibliographystyle{IEEEtran}
\bibliography{TAC-paper}

\section{Proofs for Estimates of Change in Minimizers}
\label{usefulConcIneq} 

For our analysis of minimizer change estimation, we need to introduce a few results for sub-Gaussian random variables including the following key technical lemma from \cite{Antonini2005}. This lemma controls the concentration of sums of random variables that are sub-Gaussian conditioned on a particular filtration $\{\mathcal{F}_{i}\}_{i=0}^{n}$. Such a collection of random variables is referred to as a \emph{sub-Gaussian martingale sequence}.

\begin{lem}[Theorem 7.5 of \cite{Antonini2005}]
	\label{subgauss:subgaussDepLem}
	Suppose we have a collection of random variables $\{V_{i}\}_{i=1}^{n}$ and a filtration $\{\mathcal{F}_{i}\}_{i=0}^{n}$ such that for each random variable $V_{i}$ it holds that
	\begin{enumerate}
		\item $\mathbb{E}\left[ \exp\left\{ s \left( V_{i} - \mathbb{E}\left[ V_{i}  \;\big|\; \mathcal{F}_{i-1}  \right]\right) \right\} \;\big|\; \mathcal{F}_{i-1}  \right] \leq e^{\frac{1}{2}\sigma_{i}^{2}s^{2}}$ with $\sigma_{i}^{2}$ a constant.
		\item $V_{i}$ is $\mathcal{F}_{i}$-measurable.
	\end{enumerate}
	Then for every $\bm{a} \in \mathbb{R}^{n}$ it holds that
	\[
	\mathbb{P}\left\{ \sum_{i=1}^{n} a_{i} V_{i} > \sum_{i=1}^{n} a_{i} \mathbb{E}\left[ V_{i}  \;\big|\; \mathcal{F}_{i-1}  \right] + t \right\} \leq \exp\left\{ - \frac{t^{2}}{2 \nu} \right\}
	\]
	with $\nu = \sum_{i=1}^{n} \sigma_{i}^{2} a_{i}^{2}$. The other tail is similarly bounded.
\end{lem}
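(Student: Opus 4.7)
The plan is to prove this via the standard Chernoff--Cram\'er (Markov on the exponential moment) argument, combined with iterated conditioning against the filtration $\{\mathcal{F}_i\}$, so that the assumed conditional sub-Gaussian bound on each $V_i - \mathbb{E}[V_i \mid \mathcal{F}_{i-1}]$ can be peeled off one index at a time.

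First I would introduce the centered sum $M_n \triangleq \sum_{i=1}^n a_i\bigl(V_i - \mathbb{E}[V_i \mid \mathcal{F}_{i-1}]\bigr)$, which is a martingale with respect to $\{\mathcal{F}_i\}$ by hypothesis~(2). The event in the statement is exactly $\{M_n > t\}$, so Markov's inequality applied to $e^{s M_n}$ for $s>0$ yields $\mathbb{P}\{M_n > t\} \leq e^{-st}\,\mathbb{E}[e^{s M_n}]$. The whole proof is therefore reduced to showing $\mathbb{E}[e^{s M_n}] \leq \exp\!\bigl(\tfrac12 s^2 \nu\bigr)$ with $\nu = \sum_i \sigma_i^2 a_i^2$.

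For this moment-generating-function bound I would proceed by induction on $n$, using the tower property. Note $M_n = M_{n-1} + a_n (V_n - \mathbb{E}[V_n \mid \mathcal{F}_{n-1}])$, and $M_{n-1}$ is $\mathcal{F}_{n-1}$-measurable (again by~(2)). Hence
\begin{equation*}
\mathbb{E}\bigl[e^{s M_n}\,\big|\,\mathcal{F}_{n-1}\bigr] \;=\; e^{s M_{n-1}}\,\mathbb{E}\!\left[\exp\!\left\{s a_n\bigl(V_n - \mathbb{E}[V_n \mid \mathcal{F}_{n-1}]\bigr)\right\}\,\Big|\,\mathcal{F}_{n-1}\right].
\end{equation*}
Applying hypothesis~(1) with the scalar $s$ there replaced by $s a_n$ bounds the inner conditional expectation by $\exp\!\bigl(\tfrac12 \sigma_n^2 s^2 a_n^2\bigr)$. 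Taking full expectation and invoking the inductive hypothesis $\mathbb{E}[e^{s M_{n-1}}] \leq \exp\!\bigl(\tfrac12 s^2 \sum_{i<n}\sigma_i^2 a_i^2\bigr)$ gives $\mathbb{E}[e^{s M_n}] \leq \exp\!\bigl(\tfrac12 s^2 \nu\bigr)$ as required.

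Combining this with the Markov step yields $\mathbb{P}\{M_n > t\} \leq \exp\!\bigl(-st + \tfrac12 s^2 \nu\bigr)$, and minimizing the right-hand side in $s>0$ at $s^\star = t/\nu$ produces the claimed bound $\exp\!\bigl(-t^2/(2\nu)\bigr)$. The lower-tail bound follows by the same argument applied to $-V_i$ (whose conditional centered MGF satisfies the same sub-Gaussian bound since the bound in~(1) is symmetric in $s$). The only subtlety to handle carefully is the measurability bookkeeping at each conditioning step, namely that $M_{n-1}$ and the centering $\mathbb{E}[V_n \mid \mathcal{F}_{n-1}]$ can be pulled out of the conditional expectation given $\mathcal{F}_{n-1}$; this is routine once hypothesis~(2) is used to place each $V_i$ (and hence $M_i$) in $\mathcal{F}_i$. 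I do not foresee a genuine obstacle, so the main care is purely notational.
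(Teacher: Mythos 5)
Your proposal is correct and complete: the decomposition $M_n = \sum_{i=1}^n a_i\bigl(V_i - \mathbb{E}[V_i \mid \mathcal{F}_{i-1}]\bigr)$, the tower-property peeling of the conditional MGF bound with $s$ replaced by $sa_n$, and the optimization at $s^\star = t/\nu$ constitute the standard Chernoff--Azuma argument, and each step goes through under the stated hypotheses. Note, however, that the paper itself does not prove this lemma at all---it is imported verbatim as Theorem 7.5 of the cited reference of Antonini et al.---so your contribution is a self-contained reconstruction of the classical proof rather than an alternative to an in-paper argument. The one hypothesis-level point worth making explicit is that condition~(1) must hold for \emph{every} real $s$ (not merely $s > 0$): you need this already for the upper tail whenever some $a_i < 0$, since then $sa_i < 0$ even for $s > 0$, and again for the lower tail via the substitution $s \mapsto -s$; you flag this symmetry in passing, and it is indeed how the condition is intended in the source.
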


If we can upper bound the conditional expectations $\mathbb{E}\left[ V_{i}  \;\big|\; \mathcal{F}_{i-1}  \right] \leq C_{i}$ by \\$\mathcal{F}_{i-1}$-measurable random variables $C_{i}$, then we have
\iftoggle{useTwoColumn}{	
	\begin{align}
	\mathbb{P}\left\{ \sum_{i=1}^{n} a_{i} V_{i} > \sum_{i=1}^{n} a_{i} C_{i} + t \right\} \leq \exp\left\{ - \frac{t^{2}}{2 \nu} \right\}. \nonumber
	\end{align}	
}{
\begin{equation*}
\mathbb{P}\left\{ \sum_{i=1}^{n} a_{i} V_{i} > \sum_{i=1}^{n} a_{i} C_{i} + t \right\}  \leq \mathbb{P}\left\{ \sum_{i=1}^{n} a_{i} V_{i} > \sum_{i=1}^{n} a_{i} \mathbb{E}\left[ V_{i}  \;\big|\; \mathcal{F}_{i-1}  \right] + t \right\} \leq \exp\left\{ - \frac{t^{2}}{2 \nu} \right\}.
\end{equation*}
}
For our analysis, we generally cannot compute $ \mathbb{E}\left[ V_{i}  \;\big|\; \mathcal{F}_{i-1}  \right]$, but we can find ``nice'' $C_{i}$.

To find $\sigma^{2}_{i}$ for use in \cref{subgauss:subgaussDepLem}, we employ the following conditional version of Hoeffding's Lemma.

\begin{lem}[Conditional Hoeffding's Lemma]
	\label{estRho:condHoeffdingLemma}
	If a random variable $V$ and a sigma algebra $\mathcal{F}$ satisfy $a \leq V \leq b$ and $\mathbb{E}[V|\mathcal{F}] = 0$, then
	\[
	\mathbb{E}\left[ e^{sV} \;|\; \mathcal{F} \right] \leq \exp\left\{ \frac{1}{8} (b-a)^{2} s^{2} \right\}.
	\]
\end{lem}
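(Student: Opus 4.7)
The plan is to mirror the classical proof of Hoeffding's lemma, but with every appearance of an (unconditional) expectation replaced by a conditional expectation given $\mathcal{F}$. The key structural point is that $a$ and $b$ are deterministic constants, so they can pass freely in and out of $\mathbb{E}[\,\cdot\,|\mathcal{F}]$, and any convexity-based bound on $V$ that is linear in $V$ remains valid after conditioning.

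First, I would exploit convexity of the map $v \mapsto e^{sv}$ on the interval $[a,b]$. Writing any $v\in[a,b]$ as the convex combination $v = \tfrac{b-v}{b-a}\,a + \tfrac{v-a}{b-a}\,b$, convexity gives the pathwise inequality
\[
e^{sV} \;\le\; \frac{b-V}{b-a}\,e^{sa} \;+\; \frac{V-a}{b-a}\,e^{sb}.
\]
Taking $\mathbb{E}[\,\cdot\,|\mathcal{F}]$ on both sides and using linearity together with $\mathbb{E}[V|\mathcal{F}]=0$ yields
\[
\mathbb{E}\!\left[e^{sV}\,\big|\,\mathcal{F}\right]
\;\le\; \frac{b}{b-a}\,e^{sa} \;-\; \frac{a}{b-a}\,e^{sb}.
\]
Note that this upper bound is deterministic—no randomness remains on the right-hand side—so from this point the argument reduces to the usual unconditional one.

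Next, I would introduce the reparameterization $p \triangleq -a/(b-a)$ (so that $1-p = b/(b-a)$, and $p\in[0,1]$ because $a\le 0 \le b$ by the mean-zero hypothesis), and let $u \triangleq s(b-a)$. A direct calculation rewrites the right-hand side above as $e^{\phi(u)}$, where
\[
\phi(u) \;\triangleq\; -pu + \log\!\bigl(1-p + p\,e^{u}\bigr).
\]
Then I would verify $\phi(0)=0$, $\phi'(0)=0$, and bound $\phi''(u) \le 1/4$ uniformly in $u$ by recognizing $\phi''(u) = q(1-q)$ with $q = pe^u/(1-p+pe^u) \in [0,1]$ and invoking the AM-GM bound $q(1-q)\le 1/4$. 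Taylor's theorem with integral remainder then gives $\phi(u) \le u^2/8$, and unwinding the definitions produces $\mathbb{E}[e^{sV}\,|\,\mathcal{F}] \le \exp\{s^2(b-a)^2/8\}$, which is the claim.

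There is no serious obstacle: the only subtlety, compared with the standard Hoeffding proof, is the need to check that the conditioning operation commutes with the convex-combination bound. This works precisely because $a$ and $b$ are constants (not $\mathcal{F}$-measurable random variables), so the coefficients $\tfrac{b-V}{b-a}$ and $\tfrac{V-a}{b-a}$ are affine in $V$ and conditional expectation acts linearly. If the bounds $a,b$ were allowed to depend on $\mathcal{F}$, one would instead take them to be $\mathcal{F}$-measurable and essentially the same argument would go through with $(b-a)^2$ interpreted pathwise; but the stated lemma only requires the constant case.
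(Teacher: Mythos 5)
Your proposal is correct and is exactly the argument the paper has in mind: the paper's proof simply cites the standard proof of Hoeffding's lemma from \cite{Boucheron13}, which is the convexity-plus-Taylor argument you carry out, with the observation (which you make explicit) that conditioning on $\mathcal{F}$ commutes with the affine-in-$V$ bound because $a$ and $b$ are constants and $\mathbb{E}[V|\mathcal{F}]=0$ reduces everything to the deterministic function $\phi(u)$. The only cosmetic gap is the degenerate case $a=b$ (where $V\equiv 0$ and the claim is trivial), which the standard proof also handles implicitly.
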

\begin{proof}
	Follows from the standard proof of Hoeffding's Lemma from \cite{Boucheron13}.
\end{proof}

Using these tools, we can analyze averages of the direct estimate. We focus on the proof of \cref{rho_conc_eq} and \cref{rho_conc_ineq} as the proofs of \cref{rho_conc_eq_sq} and \cref{rho_conc_ineq_sq} are simple extensions.

\subsection{Euclidean Norm Condition}

As a reminder, we consider running our optimization algorithm used to generate $\bx_{i}$ again using independent samples $\{\tilde{\bz}_{i}(\iterIndex)\}_{\iterIndex=1}^{\numIter_{i}}$ to yield a second approximate minimizer $\tilde{\bx}_{i}$. For SGD, the process to do this is summarized in \cref{estRho:secondSampleSGD}. We connect $\tilde{\rho}_{i}$ to $\tilde{\rho}_{i}^{(2)}$  with $\tilde{\rho}_{i}^{(2)}$ defined in \cref{estRho:rhoTwoEstimate}.

\begin{proof}[Proof of \cref{rho_conc_eq}]
	To proceed, we compare the three single step estimates:
	\begin{enumerate}
		\item $\tilde{\rho}_{i} = \| \bx_{i} - \bx_{i-1} \|_{2} + \frac{1}{m} \| G_{i} \|_{2} + \frac{1}{m} \| G_{i-1}\|_{2}$
		\item $\tilde{\rho}_{i}^{(2)} = \| \tilde{\bx}_{i} - \tilde{\bx}_{i-1} \|_{2} + \frac{1}{m} \| \tilde{G}_{i} \|_{2} + \frac{1}{m} \| \tilde{G}_{i-1}\|_{2}$
		\item $\tilde{\rho}_{i}^{(3)} = \| \tilde{\bx}_{i} - \tilde{\bx}_{i-1} \|_{2} + \frac{1}{m} \| \nabla f_{i}(\tilde{\bx}_{i}) \|_{2} + \frac{1}{m} \| \nabla f_{i-1}(\tilde{\bx}_{i-1}) \|_{2}$
	\end{enumerate}
	where
	\[
	\hat{G}_{i} = \frac{1}{\numIter_{i}} \sum_{\iterIndex=1}^{\numIter_{i}} \nabla_{\bx} \lossFunc(\bx_{i},\bz_{i}(\iterIndex))
	\]
	and
	\[
	\tilde{G}_{i} = \frac{1}{\numIter_{i}} \sum_{\iterIndex=1}^{\numIter_{i}} \nabla_{\bx} \lossFunc(\tilde{\bx}_{i},\bz_{i}(\iterIndex)).
	\]
	Define $\hat{\rho}_{n}^{(2)}$ and $\hat{\rho}_{n}^{(3)}$ analogously to $\hat{\rho}_{n}$ as an average of the relevant single step estimates. 

	Using the triangle inequality and the reverse triangle inequality, it holds that
	\iftoggle{useTwoColumn}{
		\begin{align}
		&| \hat{\rho}_{n} - \hat{\rho}_{n}^{(3)}| \nonumber \\
		&\quad = | \hat{\rho}_{n} - \hat{\rho}_{n}^{(2)} + \hat{\rho}_{n}^{(2)} - \hat{\rho}_{n}^{(3)}| \nonumber \\
		&\quad \leq | \hat{\rho}_{n} - \hat{\rho}_{n}^{(2)}| + |\hat{\rho}_{n}^{(2)} - \hat{\rho}_{n}^{(3)}| \nonumber \\
		&\quad \leq \frac{1}{n-1} \sum_{i=2}^{n} \left( \| \bx_{i} - \tilde{\bx}_{i}\|_{2} + \| \bx_{i-1} - \tilde{\bx}_{i-1}\|_{2} + \frac{1}{m} \| \hat{G}_{i} - \tilde{G}_{i}\|_{2} \right. \nonumber \\
		&\qquad \qquad \qquad \qquad \qquad \left. + \frac{1}{m} \| \hat{G}_{i-1} - \tilde{G}_{i-1}\|_{2}  \right) \nonumber \\
		&\quad \quad  + \frac{1}{n-1} \sum_{i=2}^{n} \left( \frac{1}{m} \| \tilde{G}_{i} - \nabla f_{i}(\tilde{\bx}_{i})\|_{2} \right. \nonumber \\
		&\left. \quad \quad \quad \quad \quad \quad  + \frac{1}{m} \| \tilde{G}_{i-1} - \nabla f_{i-1}(\tilde{\bx}_{i-1})\|_{2} \right) \nonumber \\
		&\quad \leq \frac{1}{n-1} \left( \| \bx_{1} - \tilde{\bx}_{1}\|_{2} + 2 \sum_{i=2}^{n-1}  \| \bx_{i} - \tilde{\bx}_{i}\|_{2} + \| \bx_{n} - \tilde{\bx}_{n}\|_{2} \right) \nonumber \\
		&\quad \quad \quad  + \frac{1}{m(n-1)} \left( \| \hat{G}_{1} - \tilde{G}_{1} \|_{2} \right. \nonumber \\
		&\left. \quad \quad \quad \quad \quad + 2 \sum_{i=2}^{n-1}  \| \hat{G}_{i} - \tilde{G}_{i} \|_{2} + \|\hat{G}_{n} - \tilde{G}_{n}\|_{2} \right) \nonumber \\
		&\quad \quad \quad  + \frac{1}{m(n-1)} \left( \| \tilde{G}_{1} - \nabla f_{1}(\tilde{\bx}_{1}) \|_{2} + 2 \sum_{i=2}^{n-1}  \| \tilde{G}_{i} - \nabla f_{i}(\tilde{\bx}_{i}) \|_{2} \right. \nonumber \\
		&\qquad \qquad \qquad \qquad \qquad \left. + \|\tilde{G}_{n} - \nabla f_{n}(\tilde{\bx}_{n})\|_{2} \right). \nonumber
		\end{align}	
	}{
	\begin{eqnarray}
	| \hat{\rho}_{n} - \hat{\rho}_{n}^{(3)}| &=& | \hat{\rho}_{n} - \hat{\rho}_{n}^{(2)} + \hat{\rho}_{n}^{(2)} - \hat{\rho}_{n}^{(3)}| \nonumber \\
	&\leq& | \hat{\rho}_{n} - \hat{\rho}_{n}^{(2)}| + |\hat{\rho}_{n}^{(2)} - \hat{\rho}_{n}^{(3)}| \nonumber \\
	&\leq& \frac{1}{n-1} \sum_{i=2}^{n} \left( \| \bx_{i} - \tilde{\bx}_{i}\|_{2} + \| \bx_{i-1} - \tilde{\bx}_{i-1}\|_{2} + \frac{1}{m} \| \hat{G}_{i} - \tilde{G}_{i}\|_{2} + \frac{1}{m} \| \hat{G}_{i-1} - \tilde{G}_{i-1}\|_{2}   \right) \nonumber \\
	&& \;\;\;\;\; + \frac{1}{n-1} \sum_{i=2}^{n} \left( \frac{1}{m} \| \tilde{G}_{i} - \nabla f_{i}(\tilde{\bx}_{i})\|_{2} + \frac{1}{m} \| \tilde{G}_{i-1} - \nabla f_{i-1}(\tilde{\bx}_{i-1})\|_{2} \right) \nonumber \\
	&\leq& \frac{1}{n-1} \left( \| \bx_{1} - \tilde{\bx}_{1}\|_{2} + 2 \sum_{i=2}^{n-1}  \| \bx_{i} - \tilde{\bx}_{i}\|_{2} + \| \bx_{n} - \tilde{\bx}_{n}\|_{2} \right) \nonumber \\
	&& \;\;\;\;\;\;\;\;\;\;\;\;\;\; + \frac{1}{m(n-1)} \left( \| \hat{G}_{1} - \tilde{G}_{1} \|_{2} + 2 \sum_{i=2}^{n-1}  \| \hat{G}_{i} - \tilde{G}_{i} \|_{2} + \|\hat{G}_{n} - \tilde{G}_{n}\|_{2} \right) \nonumber \\
	&& \;\;\;\;\;\;\;\;\;\;\;\;\;\; + \frac{1}{m(n-1)} \left( \| \tilde{G}_{1} - \nabla f_{1}(\tilde{\bx}_{1}) \|_{2} + 2 \sum_{i=2}^{n-1}  \| \tilde{G}_{i} - \nabla f_{i}(\tilde{\bx}_{i}) \|_{2} + \|\tilde{G}_{n} - \nabla f_{n}(\tilde{\bx}_{n})\|_{2} \right). \nonumber
	\end{eqnarray}
}
Define 
\[
U_{n} = \frac{1}{n-1} \left( \| \bx_{1} - \tilde{\bx}_{1}\|_{2} + 2 \sum_{i=2}^{n-1}  \| \bx_{i} - \tilde{\bx}_{i}\|_{2} + \| \bx_{n} - \tilde{\bx}_{n}\|_{2} \right)
\]
and
\[
V_{n} = \frac{1}{m(n-1)} \left( \| \hat{G}_{1} - \tilde{G}_{1} \|_{2} + 2 \sum_{i=2}^{n-1}  \| \hat{G}_{i} - \tilde{G}_{i} \|_{2} + \|\hat{G}_{n} - \tilde{G}_{n}\|_{2} \right)
\]
and
\iftoggle{useTwoColumn}{
	\begin{align}
	W_{n} = \frac{1}{m(n-1)} &\left( \| \tilde{G}_{1} - \nabla f_{1}(\tilde{\bx}_{1}) \|_{2} + 2 \sum_{i=2}^{n-1}  \| \tilde{G}_{i} - \nabla f_{i}(\tilde{\bx}_{i}) \|_{2} \right. \nonumber \\
	&\qquad \qquad \qquad \left. + \|\tilde{G}_{n} - \nabla f_{n}(\tilde{\bx}_{n})\|_{2} \right). \nonumber
	\end{align}
}{
\[
W_{n} = \frac{1}{m(n-1)} \left( \| \tilde{G}_{1} - \nabla f_{1}(\tilde{\bx}_{1}) \|_{2} + 2 \sum_{i=2}^{n-1}  \| \tilde{G}_{i} - \nabla f_{i}(\tilde{\bx}_{i}) \|_{2} + \|\tilde{G}_{n} - \nabla f_{n}(\tilde{\bx}_{n})\|_{2} \right).
\]
}
Then it holds that
\[
| \hat{\rho}_{n} - \hat{\rho}_{n}^{(3)}| \leq U_{n} + V_{n} + W_{n}.
\]
Now, we look at bounding $\mathbb{E}[\|\bx_{i} - \bx_{i-1}\|_{2} \;|\; \mathcal{F}_{i-1}]$, $\mathbb{E}[\|\hat{G}_{i} - \tilde{G}_{i}\|_{2} \;|\; \mathcal{F}_{i-1}]$, \\and $\mathbb{E}[\|\tilde{G}_{i} - \nabla f_{i}(\tilde{\bx})_{i}\|_{2} \;|\; \mathcal{F}_{i-1}]$. First, by assumption~\ref{probState:assumpB1}, it holds that
\[
\mathbb{E}[\|\bx_{i} - \tilde{\bx}_{i}\|_{2} \;|\; \mathcal{F}_{i-1}] \leq C_{i}.
\]
Second, it holds that
\iftoggle{useTwoColumn}{
	\begin{align}
	\mathbb{E}&\left[ \|\hat{G}_{i} - \tilde{G}_{i}\|_{2} \;|\; \mathcal{F}_{i-1} \right] \nonumber \\
	&\quad \leq \frac{1}{\numIter_{i}}\sum_{\iterIndex=1}^{\numIter_{i}} \mathbb{E}\left[ \| \sgrad{\bx_{i}}{\bz_{i}(\iterIndex)}{i} - \sgrad{\tilde{\bx}_{i}}{\bz_{i}(\iterIndex)}{i} \|_{2} \;|\; \mathcal{F}_{i-1} \right] \nonumber \\
	&\quad = \mathbb{E}\left[ \| \sgrad{\bx_{i}}{\bz_{i}(1)}{i} - \sgrad{\tilde{\bx}_{i}}{\bz_{i}(1)}{i} \|_{2} \;|\; \mathcal{F}_{i-1} \right] \nonumber \\
	&\quad \leq M \mathbb{E}\left[ \| \bx_{i} - \tilde{\bx}_{i} \|_{2} \;|\; \mathcal{F}_{i-1}  \right]  \nonumber \\
	&\quad \leq M C_{i}. \nonumber
	\end{align}	
}{
\begin{eqnarray}
\mathbb{E}\left[ \|\hat{G}_{i} - \tilde{G}_{i}\|_{2} \;|\; \mathcal{F}_{i-1} \right] &\leq& \frac{1}{\numIter_{i}}\sum_{\iterIndex=1}^{\numIter_{i}} \mathbb{E}\left[ \| \sgrad{\bx_{i}}{\bz_{i}(\iterIndex)}{i} - \sgrad{\tilde{\bx}_{i}}{\bz_{i}(\iterIndex)}{i} \|_{2} \;|\; \mathcal{F}_{i-1} \right] \nonumber \\
&=& \mathbb{E}\left[ \| \sgrad{\bx_{i}}{\bz_{i}(1)}{i} - \sgrad{\tilde{\bx}_{i}}{\bz_{i}(1)}{i} \|_{2} \;|\; \mathcal{F}_{i-1} \right] \nonumber \\
&\leq& M \mathbb{E}\left[ \| \bx_{i} - \tilde{\bx}_{i} \|_{2} \;|\; \mathcal{F}_{i-1}  \right]  \nonumber \\
&\leq& M C_{i}. \nonumber
\end{eqnarray}
}
Third, it holds that
\iftoggle{useTwoColumn}{
	\begin{align}
	\mathbb{E}&\left[  \| \tilde{G}_{i} - \nabla f_{i}(\tilde{\bx}_{i}) \|_{2} \;|\; \mathcal{F}_{i-1}  \right] \nonumber \\
	&\quad \leq \left( \mathbb{E}\left[ \Bigg\| \frac{1}{\numIter_{i}} \sum_{\iterIndex=1}^{\numIter_{i}} \left( \sgrad{\tilde{\bx}_{i}}{\bz_{i}(\iterIndex)}{i} - \nabla f_{i}(\tilde{\bx}_{i})  \right)    \Bigg\|_{2}^{2} \;\Bigg|\; \mathcal{F}_{i-1} \right] \right)^{1/2} \nonumber \\
	&\quad \leq \left( \mathbb{E}\left[ \frac{1}{\numIter_{i}^{2}} \sum_{\iterIndex=1}^{\numIter_{i}} \| \sgrad{\tilde{\bx}_{i}}{\bz_{i}(\iterIndex)}{i} - \nabla f_{i}(\tilde{\bx}_{i}) \|_{2}^{2}   \;\Bigg|\; \mathcal{F}_{i-1} \right] \right)^{1/2} \nonumber \\
	&\quad \leq \left( \frac{\sigma}{\numIter_{i}} \right)^{1/2}. \nonumber
	\end{align}	
}{
\begin{eqnarray}
\mathbb{E}\left[  \| \tilde{G}_{i} - \nabla f_{i}(\tilde{\bx}_{i}) \|_{2} \;|\; \mathcal{F}_{i-1}  \right] &\leq& \left( \mathbb{E}\left[ \Bigg\| \frac{1}{\numIter_{i}} \sum_{\iterIndex=1}^{\numIter_{i}} \left( \sgrad{\tilde{\bx}_{i}}{\bz_{i}(\iterIndex)}{i} - \nabla f_{i}(\tilde{\bx}_{i})  \right)    \Bigg\|_{2}^{2} \;\Bigg|\; \mathcal{F}_{i-1} \right] \right)^{1/2} \nonumber \\
&\leq& \left( \mathbb{E}\left[ \frac{1}{\numIter_{i}^{2}} \sum_{\iterIndex=1}^{\numIter_{i}} \| \sgrad{\tilde{\bx}_{i}}{\bz_{i}(\iterIndex)}{i} - \nabla f_{i}(\tilde{\bx}_{i}) \|_{2}^{2}   \;\Bigg|\; \mathcal{F}_{i-1} \right] \right)^{1/2} \nonumber \\
&\leq& \left( \frac{\sigma}{\numIter_{i}} \right)^{1/2}. \nonumber
\end{eqnarray}
}
The resulting bounds on the expectation of $U_{n}$, $V_{n}$, and $W_{n}$ denoted $\bar{U}_{n}$, $\bar{V}_{n}$, and $\bar{W}_{n}$ are as follows:
\begin{enumerate}
	\item $\bar{U}_{n} = \frac{1}{n-1} \left( C_{1} + 2 \sum_{i=2}^{n-1}  C_{i} + C_{n} \right)
	$
	\item $\bar{V}_{n} = \frac{M}{m(n-1)} \left( C_{1}  + 2 \sum_{i=2}^{n-1}  C_{i} + C_{n} \right)$
	\item $\bar{W}_{n} = \frac{1}{m(n-1)} \left( \left( \frac{\sigma}{\numIter_{1}} \right)^{1/2} + 2 \sum_{i=2}^{n-1}  \left( \frac{\sigma}{\numIter_{1}} \right)^{1/2} + \left( \frac{\sigma}{\numIter_{n}} \right)^{1/2} \right)$.
\end{enumerate}

Then it holds that
\iftoggle{useTwoColumn} {
	\begin{align}
	\mathbb{P}&\left\{ |\hat{\rho}_{n} - \hat{\rho}_{n}^{(3)}| > (\bar{U}_{n} + \bar{V}_{n} + \bar{W}_{n}) + t_{n} \right\} \nonumber \\
	&\quad \leq \mathbb{P}\left\{ U_{n} + V_{n} + W_{n} > (\bar{U}_{n} + \bar{V}_{n} + \bar{W}_{n}) + t_{n} \right\} \nonumber \\
	&\quad \leq \mathbb{P}\left\{ U_{n} > \bar{U}_{n} + \frac{1}{3}t_{n} \right\} + \mathbb{P}\left\{  V_{n} > \bar{V}_{n} + \frac{1}{3}t_{n} \right\} \nonumber \\
	&\qquad \qquad \qquad \qquad + \mathbb{P}\left\{ W_{n} > \bar{W}_{n} + \frac{1}{3}t_{n} \right\}. \nonumber
	\end{align}	
}{
\begin{eqnarray}
\mathbb{P}\left\{ \hat{|\rho}_{n} - \hat{\rho}_{n}^{(3)}| > (\bar{U}_{n} + \bar{V}_{n} + \bar{W}_{n}) + t_{n} \right\} &\leq& \mathbb{P}\left\{ U_{n} + V_{n} + W_{n} > (\bar{U}_{n} + \bar{V}_{n} + \bar{W}_{n}) + t_{n} \right\} \nonumber \\
&\leq& \mathbb{P}\left\{ U_{n} > \bar{U}_{n} + \frac{1}{3}t_{n} \right\} + \mathbb{P}\left\{  V_{n} > \bar{V}_{n} + \frac{1}{3}t_{n} \right\} + \mathbb{P}\left\{ W_{n} > \bar{W}_{n} + \frac{1}{3}t_{n} \right\}. \nonumber
\end{eqnarray}
}
Now, we bound each of these three probabilities using \cref{subgauss:subgaussDepLem}. First, we have 
\[
0 \leq \|\bx_{i} - \tilde{\bx}_{i}\|_{2} \leq \text{diam}(\xSp)
\]
so applying \cref{estRho:condHoeffdingLemma} and \cref{subgauss:subgaussDepLem} with $\sigma_{i}^{2} = \frac{1}{4}\text{diam}^{2}(\xSp)$ and 
\begin{eqnarray}
a_{1} &=& a_{n} = \frac{1}{n-1} \nonumber \\
a_{2} &=& \cdots = a_{n-2} = \frac{2}{n-1} \nonumber 
\end{eqnarray}
yields
\begin{eqnarray}
\nu_{U} &=& \frac{1}{4} \text{diam}^{2}(\xSp) \sum_{i=1}^{n} a_{i}^{2} \nonumber \\
&=& \frac{1}{4} \text{diam}^{2}(\xSp) \left(\frac{1}{n-1}\right)^{2} + \sum_{i=2}^{n-1} \left(\frac{2}{n-1}\right)^{2} + \left(\frac{1}{n-1}\right)^{2} \nonumber \\
&\leq& \frac{1}{n-1}\text{diam}^{2}(\xSp).
\end{eqnarray}
Therefore, it holds that
\begin{eqnarray}
\mathbb{P}\left\{ U_{n} > \bar{U}_{n} + \frac{1}{3}t_{n} \right\} &\leq& \exp\left\{ -\frac{(t_{n}/3)^{2}}{2\nu_{U}}  \right\} \nonumber \\
&=& \exp\left\{ - \frac{(n-1)t_{n}^{2}}{18 \text{diam}^{2}(\xSp)} \right\}. \nonumber
\end{eqnarray}
Since
\[
0 \leq \|\hat{G}_{i} - \tilde{G}_{i}\|_{2} \leq 2G
\]
and
\[
0 \leq \| \tilde{G}_{i} - \nabla f_{i}(\tilde{\bx}_{i})\|_{2} \leq 2G
\]
we can apply \cref{estRho:condHoeffdingLemma} and \cref{subgauss:subgaussDepLem} to $V_{n}$ and $W_{n}$ to yield
\iftoggle{useTwoColumn}{
\begin{align}
\mathbb{P}\left\{ V_{n} > \bar{V}_{n} + \frac{1}{3}t_{n} \right\} &\leq \exp\left\{ -\frac{(t_{n}/3)^{2}}{2\nu_{V}}  \right\} \nonumber \\
& = \exp\left\{ - \frac{m^2(n-1)t_{n}^{2}}{72 G^{2}} \right\} \nonumber
\end{align}
}{
\begin{equation*}
\mathbb{P}\left\{ V_{n} > \bar{V}_{n} + \frac{1}{3}t_{n} \right\} \leq \exp\left\{ -\frac{(t_{n}/3)^{2}}{2\nu_{V}}  \right\} = \exp\left\{ - \frac{m^2(n-1)t_{n}^{2}}{72 G^{2}} \right\}
\end{equation*}
}
and similarly
\begin{equation*}
\mathbb{P}\left\{ W_{n} > \bar{W}_{n} + \frac{1}{3}t_{n} \right\} \leq \exp\left\{ - \frac{m^2(n-1)t_{n}^{2}}{72 G^{2}} \right\}.
\end{equation*}
Define
\begin{eqnarray}
D_{n} &=& \bar{U}_{n} + \bar{V}_{n} + \bar{W}_{n} \nonumber
\end{eqnarray}
which is the definition in \cref{rho_conc_eq:Dn}. It follows that
\iftoggle{useTwoColumn}{
	\begin{align}
	\mathbb{P}&\left\{ \hat{\rho}_{n} < \hat{\rho}_{n}^{(3)} - D_{n} - t_{n} \right\} \nonumber \\
	&\qquad \leq \exp\left\{ - \frac{(n-1)t_{n}^{2}}{18 \text{diam}^{2}(\xSp)}\right\} + 2\exp\left\{ - \frac{m^2(n-1)t_{n}^{2}}{72 G^{2}} \right\}.
	\end{align}
}{
\[
\mathbb{P}\left\{ \hat{\rho}_{n} < \hat{\rho}_{n}^{(3)} - D_{n} - t_{n} \right\} \leq \exp\left\{ - \frac{(n-1)t_{n}^{2}}{18 \text{diam}^{2}(\xSp)}\right\} + 2\exp\left\{ - \frac{m^2(n-1)t_{n}^{2}}{72 G^{2}} \right\}.
\]
}
Then it follows that
	\begin{align}
	\sum_{n=2}^{\infty} \mathbb{P}&\left\{ \hat{\rho}_{n} < \hat{\rho}_{n}^{(3)} - D_{n} - t_{n} \right\} \nonumber \\
	&\;\;  \leq\sum_{n=2}^{\infty} \left( \exp\left\{ - \frac{(n-1)t_{n}^{2}}{18 \text{diam}^{2}(\xSp)}\right\} \right. \nonumber \\
	&\qquad\qquad \left. + 2\exp\left\{ - \frac{m^2(n-1)t_{n}^{2}}{72 G^{2}} \right\} \right) \nonumber \\
	&\;\;  < +\infty. \nonumber
	\end{align}
Therefore, by the Borel-Cantelli Lemma, for all $n$ large enough it holds that
\[
\hat{\rho}_{n} + D_{n} + t_{n} \geq \hat{\rho}_{n}^{(3)}
\]
almost surely. Finally, by \Cref{dir_est_deriv}, it holds that \an{$\hat{\rho}_{n}^{(3)} \geq\rho$}, which proves the result.
\end{proof}

Looking at the form of $D_{i}$, it follows that in this case
\[
D_{n} = \mathcal{O}\left(  \frac{1}{n-1} \sum_{i=1}^{n} \frac{1}{\sqrt{\numIter_{i}}}  \right).
\]
In the case where $\numIter_{i} = \numIter^{*}$, this implies that
\[
D_{n} = \mathcal{O}\left(  \frac{1}{\sqrt{\numIter^{*}}} \right).
\]

In \Cref{usefulConcIneqPartTwo}\iftoggle{useArxiv}{}{ of an extended version of this paper \cite{Wilson2016a}}, we finish the proofs for the inequality condition on $\rho$ and the $L_{2}$ condition for equality and inequality. The arguments are similar to those employed here.

\subsection{Proofs for Estimates of Change in Minimizers Under The Inequality Condition}
\label{usefulConcIneqPartTwo}

We can also prove the result for the inequality constraint of \cref{prob_form:opt_change_L2} using similar techniques in \cref{rho_conc_ineq}.
\begin{proof}[Proof of \cref{rho_conc_ineq}]
	Define $\bar{\rho}_{i}^{(2)}$ and $\bar{\rho}_{i}^{(3)}$ analogous to the equality case proof and the pair $\hat{\rho}_{i}^{(2)}$ and $\hat{\rho}_{i}^{(3)}$ of the form in \cref{ineqCond:basicEst}. First, we have by assumptions~\ref{probState:assumpB4}-\ref{probState:assumpB5}
	\iftoggle{useTwoColumn}{
		\begin{align}
		&|\hat{\rho}_{n} - \hat{\rho}_{n}^{(3)}| \nonumber \\
		&\quad \leq \frac{1}{n-W} \sum_{i=W+1}^{n} |\bar{\rho}^{(i)} - \bar{\rho}^{(i)}_{3}| \nonumber \\
		&\quad \leq \frac{1}{n-W} \sum_{i=W+1}^{n} \sum_{j = i-W+1}^{i} a_{j} |\tilde{\rho}_{j} - \tilde{\rho}_{j}^{(3)}| \nonumber \\
		&\quad \leq \frac{1}{n-W} \sum_{i=W+1}^{n} \sum_{j = i-W+1}^{i} a_{j} \left( |\tilde{\rho}_{j} - \tilde{\rho}_{j}^{(2)}| +  |\tilde{\rho}_{j}^{(2)} - \tilde{\rho}_{j}^{(3)}| \right) \nonumber  \\
		&\quad \leq \frac{\sum_{j = 1}^{W} a_{j}}{n-W} \sum_{i=2}^{n}  \left( |\tilde{\rho}_{i} - \tilde{\rho}_{i}^{(2)}| +  |\tilde{\rho}_{i}^{(2)} - \tilde{\rho}_{i}^{(3)}| \right) \nonumber \\
		&\quad \leq \left( \frac{n-1}{n-W} \sum_{j = 1}^{W} a_{j} \right) \frac{1}{n-1} \sum_{i=2}^{n}  \left( |\tilde{\rho}_{i} - \tilde{\rho}_{i}^{(2)}| +  |\tilde{\rho}_{i}^{(2)} - \tilde{\rho}_{i}^{(3)}| \right). \nonumber 
		\end{align}	
	}{
	\begin{eqnarray}
	|\hat{\rho}_{n} - \hat{\rho}_{n}^{(3)}| &\leq& \frac{1}{n-W} \sum_{i=W+1}^{n} |\bar{\rho}^{(i)} - \bar{\rho}^{(i)}_{3}| \nonumber \\
	&\leq& \frac{1}{n-W} \sum_{i=W+1}^{n} \sum_{j = i-W+1}^{i} a_{j} |\tilde{\rho}_{j} - \tilde{\rho}_{j}^{(3)}| \nonumber \\
	&\leq& \frac{1}{n-W} \sum_{i=W+1}^{n} \sum_{j = i-W+1}^{i} a_{j} \left( |\tilde{\rho}_{j} - \tilde{\rho}_{j}^{(2)}| +  |\tilde{\rho}_{j}^{(2)} - \tilde{\rho}_{j}^{(3)}| \right) \nonumber  \\
	&\leq& \frac{\sum_{j = 1}^{W} a_{j}}{n-W} \sum_{i=2}^{n}  \left( |\tilde{\rho}_{i} - \tilde{\rho}_{i}^{(2)}| +  |\tilde{\rho}_{i}^{(2)} - \tilde{\rho}_{i}^{(3)}| \right) \nonumber \\
	&\leq& \left( \frac{n-1}{n-W} \sum_{j = 1}^{W} a_{j} \right) \frac{1}{n-1} \sum_{i=2}^{n}  \left( |\tilde{\rho}_{i} - \tilde{\rho}_{i}^{(2)}| +  |\tilde{\rho}_{i}^{(2)} - \tilde{\rho}_{i}^{(3)}| \right). \nonumber 
	\end{eqnarray}
}
In comparison, we looked at controlling 
\[
\frac{1}{n-1} \sum_{i=2}^{n}  \left( |\tilde{\rho}_{i} - \tilde{\rho}_{i}^{(2)}| +  |\tilde{\rho}_{i}^{(2)} - \tilde{\rho}_{i}^{(3)}| \right)
\]
in the proof of \cref{rho_conc_eq}. The quantity of interest here is the same scaled by
\[
\frac{n-1}{n-W} \sum_{j = 1}^{W} a_{j}.
\]

By construction, we always have $\tilde{\rho}_{i}^{(3)} \geq \rho_{i}$. Therefore, by assumptions~\ref{probState:assumpB4}-\ref{probState:assumpB5}, it follows that
\[
\mathbb{E}\left[ h(\tilde{\rho}_{i}^{(3)},\tilde{\rho}_{i-1}^{(3)},\ldots,\tilde{\rho}_{i-W+1}^{(3)}) \;|\; \mathcal{F}_{i-1} \right] \geq \rho_{i}
\]
and $\mathbb{E}[\hat{\rho}_{n}^{(3)} \;|\; \mathcal{F}_{i-1}] \geq \rho$. Therefore, by applying \cref{subgauss:subgaussDepLem} and the Borel-Cantelli lemma, it follows for all $n$ large enough
\[
\hat{\rho}_{n}^{(3)} + t_{n} \geq \rho.
\]
This observation combined with a nearly identical proof to equality case shows that for all $n$ large enough and appropriate $\{t_{n}\}$
\[
\hat{\rho}_{n} + \left( \frac{n-1}{n-W} \sum_{j = 1}^{W} a_{j} \right) D_{n} + t_{n} \geq \rho
\]
almost surely.
\end{proof}

\subsubsection{$L_{2}$ Norm Condition}

Now, we look at analyzing $\hat{\rho}_{n}$ from \cref{estRho:combEq:L2} under the $L_{2}$ condition. First, we consider the condition in \cref{prob_form:opt_change_eq_L2}. Define the averaged estimate 
\[
\left(\hat{\rho}_{n}^{(3)}\right)^{2} \triangleq \frac{1}{n-1} \sum_{i=2}^{n} \left(\tilde{\rho}_{i}^{(3)}\right)^{2}
\]
and analogously $\left(\hat{\rho}_{n}^{(2)}\right)^{2}$. The following lemma shows that $\hat{\rho}_{n}^{(3)}$ upper bounds $\rho$ eventually.
\begin{lem}
	\label{estRho:rho3SqLemma}
	For all sequences $\{t_{n}\}$ such that
	\[
	\sum_{n=2}^{\infty} \exp\left\{ - \frac{2(n-1)t_{n}^{2}}{\text{diam}^{2}(\xSp)} \right\} < +\infty
	\]
	it holds that for all $n$ large enough
	\[
	\sqrt{\left(\hat{\rho}_{n}^{(3)}\right)^{2} + t_{n} } \geq \rho
	\]
	almost surely.
\end{lem}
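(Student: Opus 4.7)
The plan is to mirror the martingale-plus-Borel--Cantelli approach used in the proof of Theorem~\ref{rho_conc_eq}, but applied directly to the squared single-step quantities $V_i \triangleq (\tilde{\rho}_i^{(3)})^2$. The first step is to establish the pointwise lower bound $V_i \geq \rho_i^2$, where $\rho_i \triangleq \|\bx_i^* - \bx_{i-1}^*\|$. Applying Lemma~\ref{estRho:dirEstJustLemma} at $\tilde{\bx}_i$ and $\tilde{\bx}_{i-1}$ gives $\|\tilde{\bx}_i - \bx_i^*\| \leq \frac{1}{m}\|\nabla f_i(\tilde{\bx}_i)\|$ and its index-$(i-1)$ analogue. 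Combining with the triangle inequality
$$\rho_i \leq \|\bx_i^* - \tilde{\bx}_i\| + \|\tilde{\bx}_i - \tilde{\bx}_{i-1}\| + \|\tilde{\bx}_{i-1} - \bx_{i-1}^*\|$$
yields $\rho_i \leq \tilde{\rho}_i^{(3)}$, so squaring both sides gives $V_i \geq \rho_i^2$. Under the $L_2$-equality condition~\eqref{prob_form:opt_change_eq_L2}, $\mathbb{E}[\rho_i^2] = \rho^2$; passing to the conditional version $\mathbb{E}[V_i \mid \mathcal{F}_{i-1}] \geq \rho^2$ uses the fact that the auxiliary samples $\{\tilde{\bz}_i(k)\}$ producing $\tilde{\bx}_i$ are independent of $\mathcal{F}_{i-1}$ and of the underlying mechanism generating $\bx_i^*$.

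The second step is a sub-Gaussian tail bound for the centred sum $(\hat{\rho}_n^{(3)})^2 - \frac{1}{n-1}\sum_{i=2}^n \mathbb{E}[V_i \mid \mathcal{F}_{i-1}]$. Because $\tilde{\bx}_i, \tilde{\bx}_{i-1} \in \xSp$ and the gradients are uniformly bounded by $G$ (Assumption~\ref{probState:assumpB3} together with Jensen's inequality), each $V_i$ is bounded in an interval whose length is proportional to $\text{diam}^2(\xSp)$. The conditional Hoeffding Lemma~\ref{estRho:condHoeffdingLemma} then supplies the sub-Gaussian MGF bound required to invoke Lemma~\ref{subgauss:subgaussDepLem}, which with weights $a_i = 1/(n-1)$ yields a bound of the form
$$\mathbb{P}\!\left\{ (\hat{\rho}_n^{(3)})^2 < \rho^2 - t_n \right\} \leq \exp\!\left\{-\frac{2(n-1)t_n^2}{\text{diam}^2(\xSp)}\right\}.$$
The summability hypothesis on $\{t_n\}$ and the Borel--Cantelli lemma then guarantee that, almost surely, $(\hat{\rho}_n^{(3)})^2 + t_n \geq \rho^2$ for all sufficiently large $n$, and the conclusion follows on taking square roots.

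The main obstacle is controlling the range of $V_i$ sharply enough to produce exactly the constant $\text{diam}^2(\xSp)$ appearing in the stated tail bound: $\tilde{\rho}_i^{(3)}$ contains two gradient-norm contributions bounded only by $G/m$, so additional care (either absorbing these contributions into the diameter or using an algorithm-specific tighter envelope, as alluded to after Assumption~\ref{probState:assumpB1}) is needed to match the stated exponent. A secondary delicate point is the conditional-expectation lower bound $\mathbb{E}[V_i \mid \mathcal{F}_{i-1}] \geq \rho^2$, which is precisely why the auxiliary iterates $\tilde{\bx}_i$ were introduced---they decouple the estimate from the samples used to construct it, allowing $\mathbb{E}[\rho_i^2]$ to be pushed inside the conditional expectation.
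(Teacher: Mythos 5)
Your proposal follows essentially the same route as the paper's proof: the pointwise bound $\tilde{\rho}_{i}^{(3)} \geq \|\bx_{i}^{*} - \bx_{i-1}^{*}\|$ yields $\mathbb{E}\left[ \left(\tilde{\rho}_{i}^{(3)}\right)^{2} \;\middle|\; \mathcal{F}_{i-1} \right] \geq \rho^{2}$ under \cref{prob_form:opt_change_eq_L2}, after which the conditional Hoeffding bound (\cref{estRho:condHoeffdingLemma}), the sub-Gaussian martingale inequality (\cref{subgauss:subgaussDepLem}) with weights $1/(n-1)$, and Borel--Cantelli give the conclusion exactly as in the paper. The range issue you flag is real but cuts against the paper rather than against you: the paper simply asserts $0 \leq \left(\tilde{\rho}_{i}^{(3)}\right)^{2} \leq \text{diam}^{2}(\xSp)$, ignoring the two $G/m$ gradient-norm terms in $\tilde{\rho}_{i}^{(3)}$ (and its own displayed exponent even carries $\text{diam}^{4}(\xSp)$ versus $\text{diam}^{2}(\xSp)$ in the statement), so absorbing $2G/m$ into the envelope, as you propose, is the correct repair and only changes the constant in the summability condition.
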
 
\begin{proof}
	First, for all $i$ it holds that
	\[
	\tilde{\rho}_{i}^{(3)} \geq \| \bx_{i}^{*} - \bx_{i-1}^{*}\|.
	\]
	This in turn implies that
	\[
	\mathbb{E}\left[\left( \tilde{\rho}_{i}^{(3)} \right)^{2} \;\Bigg|\; \mathcal{F}_{i-1} \right] \geq \mathbb{E}\left[ \| \bx_{i}^{*} - \bx_{i-1}^{*}\|^{2} \;|\; \mathcal{F}_{i-1} \right]  = \rho^{2}.
	\]
	Second, it holds that $0 \leq \left( \tilde{\rho}_{i}^{(3)} \right)^{2} \leq \text{diam}^{2}(\xSp)$. Applying \cref{estRho:condHoeffdingLemma} and \cref{subgauss:subgaussDepLem} yields
	\iftoggle{useTwoColumn} {
		\begin{align}
		\mathbb{P}&\left\{ \left( \hat{\rho}_{n}^{(3)} \right)^{2} < \rho^{2} - t_{n} \right\} \nonumber \\
		&\leq \mathbb{P}\left\{ \left( \hat{\rho}_{n}^{(3)} \right)^{2} < \frac{1}{n-1} \sum_{i=2}^{n} \mathbb{E}\left[ \left( \tilde{\rho}_{i}^{(3)} \right)^{2} \;\Bigg|\; \mathcal{F}_{i} \right] - t_{n} \right\} \nonumber \\
		&\leq \exp\left\{ - \frac{2(n-1)t_{n}^{2}}{\text{diam}^{2}(\xSp)} \right\}. \nonumber
		\end{align}	
	}{
	\begin{eqnarray}
	\mathbb{P}\left\{ \left( \hat{\rho}_{n}^{(3)} \right)^{2} < \rho^{2} - t_{n} \right\} &\leq& \mathbb{P}\left\{ \left( \hat{\rho}_{n}^{(3)} \right)^{2} < \frac{1}{n-1} \sum_{i=2}^{n} \mathbb{E}\left[ \left( \tilde{\rho}_{i}^{(3)} \right)^{2} \;\Bigg|\; \mathcal{F}_{i} \right] - t_{n} \right\} \nonumber \\
	&\leq& \exp\left\{ - \frac{2(n-1)t_{n}^{2}}{\text{diam}^{4}(\xSp)} \right\}. \nonumber
	\end{eqnarray}
}
By the Borel-Cantelli lemma, this in turn implies that for $n$ sufficiently large
\[
\sqrt{\left( \hat{\rho}_{n}^{(3)} \right)^{2} + t_{n}} \geq \rho.
\]
\end{proof}

We can now follow the proof technique of \cref{rho_conc_eq} and \cref{estRho:rho3SqLemma} to prove \cref{rho_conc_eq_sq}.
\begin{proof}[Proof of \cref{rho_conc_eq_sq}]
	This is a straightforward extension of the proof of \cref{rho_conc_eq} using the observation that
	we have
	\iftoggle{useTwoColumn} {
		\begin{align}
		&| (\hat{\rho}_{n})^{2} - (\hat{\rho}_{n}^{(3)})^{2}| \nonumber \\
		&\quad \leq | (\hat{\rho}_{n})^{2} - (\hat{\rho}_{n}^{(2)})^{2}| + |(\hat{\rho}_{n}^{(2)})^{2} - (\hat{\rho}_{n}^{(3)})^{2}| \nonumber \\
		&\quad \leq | \hat{\rho}_{n} + \hat{\rho}_{n}^{(2)}| | \hat{\rho}_{n} - \hat{\rho}_{n}^{(2)}| + |\hat{\rho}_{n}^{(2)} + \hat{\rho}_{n}^{(3)}| |\hat{\rho}_{n}^{(2)} - \hat{\rho}_{n}^{(3)}| \nonumber \\
		&\quad \leq 2 \text{diam}(\xSp) \left(  | \hat{\rho}_{n} - \hat{\rho}_{n}^{(2)}| + |\hat{\rho}_{n}^{(2)} - \hat{\rho}_{n}^{(3)}|  \right). \nonumber
		\end{align}	
	}{
	\begin{align}
	| (\hat{\rho}_{n})^{2} - (\hat{\rho}_{n}^{(3)})^{2}| & \leq | (\hat{\rho}_{n})^{2} - (\hat{\rho}_{n}^{(2)})^{2}| + |(\hat{\rho}_{n}^{(2)})^{2} - (\hat{\rho}_{n}^{(3)})^{2}| \nonumber \\
	& \leq | \hat{\rho}_{n} + \hat{\rho}_{n}^{(2)}| | \hat{\rho}_{n} - \hat{\rho}_{n}^{(2)}| + |\hat{\rho}_{n}^{(2)} + \hat{\rho}_{n}^{(3)}| |\hat{\rho}_{n}^{(2)} - \hat{\rho}_{n}^{(3)}| \nonumber \\
	& \leq 2 \text{diam}(\xSp) \left(  | \hat{\rho}_{n} - \hat{\rho}_{n}^{(2)}| + |\hat{\rho}_{n}^{(2)} - \hat{\rho}_{n}^{(3)}|  \right). \nonumber
	\end{align}	
}
We can now follow the proof technique of \cref{rho_conc_eq}.
\end{proof}

\begin{proof}[Proof of \cref{rho_conc_ineq_sq}]
	This is a straightforward extension of the proof of \cref{rho_conc_ineq} along the lines of \cref{rho_conc_eq_sq}.
\end{proof}

\section{Proofs for Analysis with Change in Minimizers Unknown}
\label{appendix_proofs_rho_unknown}
We prove a general result showing that for any choice of $\numIter_{n}$ such that $\numIter_{n} \geq \numIter^{*}$ for all n large enough with $\numIter^{*}$ from \cref{K_with_rho_known}, the \meangap{} is controlled in the sense that
\[
\limsup_{n \to \infty} \left( \mathbb{E}[f_{n}(\bx_{n})] - f_{n}(\bx_{n}^{*}) \right) \leq \epsilon.
\]

Consider the function
\begin{equation}
\label{withRhoUnknown:phiDefKey}
\phi_{\numIter}(v) = \alpha(\numIter) \left( \sqrt{\frac{2}{m}v} + \rho \right)^{2} + \beta(\numIter) = b\left(\sqrt{\frac{2}{m}v} + \rho , \numIter \right)
\end{equation}
from assumption~\ref{probState:assumpC2}. Note that as a function of $v$, $\phi_{\numIter}(v)$ is clearly increasing and strictly concave. If we select $\numIter^{*}$ defined in \cref{K_with_rho_known}, then by definition it holds that
\begin{equation}
\label{withRhoUnknown:keyPhiIneq}
\phi_{\numIter^{*}}(\epsilon) \leq \epsilon.
\end{equation}
First, we study fixed points of the function $\phi_{\numIter^{*}}(v)$. We need Theorem 3.3 of \cite{Kennan2001} to proceed.
\begin{lem}[Theorem 3.3 of \cite{Kennan2001}]
	\label{withRhoUnknown:kennanTheorem}
	Suppose that $f$ is an increasing and strictly concave 
	\an{mapping} from $\mathbb{R}$ to $\mathbb{R}$ such that $f(0) \geq 0$ and there exist points $0 < a < b$ such that $f(a) > a$ and $f(b) < b$. Then $f$ has unique positive fixed point.
\end{lem}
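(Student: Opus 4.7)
The plan is to split the claim into existence and uniqueness. For existence, I would define the auxiliary function $g(x) = f(x) - x$ on $\mathbb{R}$. Strict concavity of $f$ on $\mathbb{R}$ implies that $f$ is continuous (since a concave function on an open interval is continuous there), hence $g$ is continuous and strictly concave. By hypothesis, $g(a) = f(a) - a > 0$ and $g(b) = f(b) - b < 0$ with $0 < a < b$, so the intermediate value theorem yields some $c \in (a,b)$ with $g(c) = 0$, i.e., $f(c) = c$. Since $c > a > 0$, this $c$ is a positive fixed point.

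For uniqueness, I would argue by contradiction: suppose $c_1$ and $c_2$ are two distinct positive fixed points, with $0 < c_1 < c_2$. Since $g$ is strictly concave and $g(0) = f(0) - 0 \geq 0$, I would write $c_1$ as the convex combination $c_1 = \lambda \cdot 0 + (1-\lambda) c_2$ with $\lambda = 1 - c_1/c_2 \in (0,1)$. Strict concavity of $g$ then gives
\[
0 = g(c_1) > \lambda g(0) + (1-\lambda) g(c_2) = \lambda g(0) \geq 0,
\]
which is a contradiction. Therefore no two distinct positive fixed points can coexist, and combined with the existence step, the fixed point is unique.

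The main obstacle will be handling the boundary behavior carefully, in particular making sure the argument goes through when $g(0) = 0$ as well as when $g(0) > 0$; the convex-combination trick above handles both cases uniformly, since we only need $g(0) \geq 0$ to make the right-hand side nonnegative. A secondary check is that continuity of $f$ (required for the intermediate value step) really does follow from strict concavity on $\mathbb{R}$; this is standard but worth noting, since the hypothesis does not state continuity explicitly. Everything else is a direct application of IVT and the defining inequality of strict concavity, so I expect the proof to be short once these two subtleties are dispatched.
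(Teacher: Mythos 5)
Your proposal is correct, and it is worth noting that the paper itself offers no argument for this lemma at all: its ``proof'' is simply the pointer ``See \cite{Kennan2001} for the proof.'' So you have supplied a genuine self-contained proof where the paper defers to a citation. Your argument is in fact the one-dimensional specialization of Kennan's: his uniqueness proof for increasing, strictly concave maps on $\mathbb{R}^n$ runs the chord-through-the-origin trick (writing the smaller fixed point as a convex combination of $0$ and the larger one and invoking strict concavity together with $f(0)\geq 0$), which is exactly your uniqueness step, while existence comes from the sign change of $g(x)=f(x)-x$ between $a$ and $b$ via the intermediate value theorem. Both of your flagged subtleties check out: a finite concave function on an open interval (indeed on any open convex subset of $\mathbb{R}^n$) is locally Lipschitz, hence continuous, so the IVT step is licensed without an explicit continuity hypothesis; and your convex-combination inequality $0=g(c_1)>\lambda g(0)+(1-\lambda)g(c_2)=\lambda g(0)\geq 0$ handles $g(0)=0$ and $g(0)>0$ uniformly. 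One small observation your write-up makes implicitly but could state outright: in one dimension the hypothesis that $f$ is increasing is never used --- strict concavity, $f(0)\geq 0$, and the two-sided crossing condition suffice --- whereas monotonicity is genuinely needed in Kennan's multidimensional version. So your proof is marginally more general than the cited statement, which is harmless here since the paper only applies the lemma to the scalar map $\phi_{\numIter^{*}}(v)+\delta$.
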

\begin{proof}
	See \cite{Kennan2001} for the proof.
\end{proof}
We consider the fixed points of the function $\phi_{\numIter^{*},\rho}(\nu) + \delta$ with $\delta \geq 0$. We add the term $\delta$ for reasons that will become clear later in the proof of \cref{withRhoUnknown:meanGapRhoKnownLemma}.
\begin{lem}
	\label{withRhoUnknown:fixedPointBasic}
	Provided that $\alpha(\numIter) > 0$ for all $\numIter > 0$, $\rho > 0$, and $\delta \geq 0$, the function $\phi_{\numIter^{*},\rho}(v) + \delta$ has a unique positive fixed point $\bar{v}_{\delta}$ with the following properties:
	\begin{enumerate}
		\item $\bar{\nu}_{0} = \phi_{\numIter^{*},\rho}(\bar{v}_{0}) \leq \epsilon$.
		\item $\phi'_{\numIter^{*},\rho}(\bar{v}_{\delta}) < 1$.
		\item $\bar{\nu}_{\delta}$ is non-decreasing in $\delta$ and 
		\[
		\lim_{\delta \searrow 0} \bar{\nu}_{\delta} = \bar{\nu}_{0}.
		\]
	\end{enumerate}
\end{lem}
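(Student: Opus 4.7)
My approach is to apply the Kennan fixed-point lemma (\cref{withRhoUnknown:kennanTheorem}) to $f_{\delta}(v) := \phi_{\numIter^{*},\rho}(v) + \delta$ and then extract properties 1--3 by a direct study of the gap function $g(v) := v - \phi_{\numIter^{*},\rho}(v)$ on $[0,\infty)$.

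The first task is to pin down the analytic shape of $\phi_{\numIter^{*},\rho}$. Expanding the square in \eqref{withRhoUnknown:phiDefKey} gives
\[
\phi_{\numIter^{*},\rho}(v) = \tfrac{2\alpha(\numIter^{*})}{m}\,v + 2\alpha(\numIter^{*})\rho\sqrt{\tfrac{2v}{m}} + \alpha(\numIter^{*})\rho^{2} + \beta(\numIter^{*}),
\]
which is non-negative at $v=0$, strictly increasing, and strictly concave on $[0,\infty)$ (the $\sqrt{v}$ term is strictly concave and the rest is affine), with $\phi'_{\numIter^{*},\rho}(0^{+}) = +\infty$ and $\phi'_{\numIter^{*},\rho}(\infty) = 2\alpha(\numIter^{*})/m$. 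The crucial observation is that this asymptotic slope is strictly less than $1$: the defining inequality $\phi_{\numIter^{*},\rho}(\epsilon) \leq \epsilon$ from \eqref{K_with_rho_known}, combined with $(\sqrt{2\epsilon/m}+\rho)^{2} > 2\epsilon/m$ (which uses $\rho>0$), forces $\frac{2\alpha(\numIter^{*})}{m}\,\epsilon < \epsilon$.

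With these properties in hand, I would verify the hypotheses of \cref{withRhoUnknown:kennanTheorem} for $f_{\delta}$: monotonicity, strict concavity, and $f_{\delta}(0) \geq 0$ are inherited from $\phi_{\numIter^{*},\rho}$; for the two sign-change points required by that lemma I pick $a := f_{\delta}(0)/2 > 0$ (nontrivial because $\alpha(\numIter^{*})\rho^{2} > 0$), giving $f_{\delta}(a) \geq f_{\delta}(0) = 2a > a$, and I pick $b$ large enough that the linear term with negative slope $2\alpha(\numIter^{*})/m - 1 < 0$ dominates the $\sqrt{v}$ term, so $f_{\delta}(b) < b$. The lemma then supplies a unique positive fixed point $\bar{v}_{\delta}$.

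The three enumerated properties now follow from studying $g$, which is strictly convex with $g(0) < 0$, $g'(0^{+}) = -\infty$, and $g(v)\to+\infty$; so $g$ has a unique minimizer $v^{\star}$ (characterized by $\phi'_{\numIter^{*},\rho}(v^{\star}) = 1$), and $g(v^{\star}) \leq g(0) < 0$. Consequently $g$ is negative on $[0, v^{\star}]$ and strictly increases from $g(v^{\star})$ to $+\infty$ on $[v^{\star},\infty)$, so for every $\delta \geq 0$ the equation $g(v)=\delta$ has a unique solution $\bar{v}_{\delta}\in(v^{\star},\infty)$, which must coincide with the fixed point of $f_{\delta}$. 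Since $\bar{v}_{\delta} > v^{\star}$, one has $g'(\bar{v}_{\delta}) = 1 - \phi'_{\numIter^{*},\rho}(\bar{v}_{\delta}) > 0$, giving property 2. The strict monotonicity and continuity of $g$ on $(v^{\star},\infty)$ make $\delta\mapsto\bar{v}_{\delta} = g^{-1}(\delta)$ strictly increasing and continuous on $[0,\infty)$, yielding property 3. For property 1, the inequality $g(\epsilon)\geq 0$ (which is just the $\numIter^{*}$-inequality rewritten) combined with the fact that $g<0$ on $[0,\bar{v}_{0})$ forces $\epsilon \geq \bar{v}_{0}$; noting $\bar{\nu}_{0} = \phi_{\numIter^{*},\rho}(\bar{v}_{0}) = \bar{v}_{0}$ finishes it. The main subtlety worth flagging is the strict bound $2\alpha(\numIter^{*})/m < 1$, which critically uses $\rho > 0$; without it the gap $g$ could fail to cross zero on the right branch and the fixed point would not exist.
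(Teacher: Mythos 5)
Your proposal is correct, and while it starts on the same track as the paper --- both derive the strict slope bound $\tfrac{2}{m}\alpha(\numIter^{*})<1$ from $\phi_{\numIter^{*}}(\epsilon)\leq\epsilon$ together with $\rho>0$, and both invoke \cref{withRhoUnknown:kennanTheorem} with the same bracketing points $a,b$ to get the unique positive fixed point --- you handle the three enumerated properties by a genuinely different and cleaner mechanism. The paper proves property 2 by two separate contradiction arguments (an intermediate-value argument ruling out $\phi'_{\numIter^{*}}(\bar{\nu}_{\delta})>1$, then an integral/concavity argument ruling out equality), proves property 1 from the characterization $\phi_{\numIter^{*}}(x)+\delta\leq x \iff x\geq\bar{\nu}_{\delta}$, and gets monotonicity in $\delta$ algebraically, asserting the limit $\bar{\nu}_{\delta}\searrow\bar{\nu}_{0}$ from monotonicity alone. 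Your gap-function analysis --- $g(v)=v-\phi_{\numIter^{*},\rho}(v)$ strictly convex with $g(0)<0$, $g'(0^{+})=-\infty$, terminal slope $1-\tfrac{2}{m}\alpha(\numIter^{*})>0$, hence a unique minimizer $v^{\star}$ with $\phi'_{\numIter^{*},\rho}(v^{\star})=1$ and fixed points living on the strictly increasing branch $\{v>v^{\star}\}$ as $\bar{v}_{\delta}=g^{-1}(\delta)$ --- delivers all three properties at once: property 2 is immediate from $\bar{v}_{\delta}>v^{\star}$, property 1 from $g<0$ on $[0,\bar{v}_{0})$ together with $g(\epsilon)\geq 0$, and property 3 from continuity and strict monotonicity of $g^{-1}$. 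This buys you two things over the paper's route: the Kennan citation becomes logically redundant (your branch analysis independently establishes existence and uniqueness), and it closes a small gap in the paper's property 3, where monotonicity alone only gives existence of $\lim_{\delta\searrow 0}\bar{\nu}_{\delta}$; identifying that limit as $\bar{\nu}_{0}$ needs a continuity-plus-uniqueness step, which your inverse-function argument supplies explicitly. The one implicit hypothesis you share with the paper is $\beta(\numIter^{*})\geq 0$ (automatic, since $b(0,\numIter)$ bounds a nonnegative excess risk), which your strict inequality $\epsilon\geq\phi_{\numIter^{*},\rho}(\epsilon)>\tfrac{2}{m}\alpha(\numIter^{*})\epsilon$ quietly uses.
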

\begin{proof}
	We have
	\[
	\phi_{\numIter^{*}}(0) + \delta = \alpha(\numIter^{*})\rho^{2} + \beta(\numIter^{*}) + \delta >0.
	\]
	Since 
	\[
	\lim_{v \to 0} \left( \phi_{\numIter^{*}}(v)  + \delta \right) = \phi_{\numIter^{*}}(0) + \delta
	\]
	and $\phi_{\numIter^{*}}(0) > 0$, for all $\delta \geq 0$, there exists a positive $a$ sufficiently small that
	\[
	\phi_{\numIter^{*}}(a) + \delta > a.
	\]
	Next, expanding $\phi_{\numIter}(v)$ yields
	\[
	\phi_{\numIter}(v) =  \frac{2}{m} \alpha(\numIter) v + 2 \alpha(\numIter) \rho \sqrt{\frac{2}{m}} \sqrt{v} + \alpha(\numIter) \rho^{2}  + \beta(\numIter).
	\]
	Since $\phi_{\numIter^{*}}(\epsilon) \leq \epsilon$, we obviously must have $\frac{2}{m} \alpha(\numIter^{*})  \leq 1$. Suppose that
	\[
	\frac{2}{m} \alpha(\numIter^{*})  = 1.
	\]
	Then it holds that
	\[
	\phi_{\numIter^{*}}(\epsilon) =  \epsilon + \sqrt{2m} \rho \sqrt{\epsilon} + \frac{m}{2} \rho^{2}  + \beta(\numIter) > \epsilon.
	\]
	This contradicts \cref{withRhoUnknown:keyPhiIneq}, so it holds that
	\[
	\frac{2}{m} \alpha(\numIter^{*}) < 1.
	\]
	It is thus readily apparent that
	\[
	v - \left( \phi_{\numIter^{*}}(v) + \delta \right) \to \infty
	\]
	as $v \to \infty$. Therefore, there exists a point $b > a$ such that
	\[
	\phi_{\numIter^{*}}(b) + \delta < b .
	\]
	In addition, it is easy to check that $\phi_{\numIter^{*}}(v) + \delta$ is increasing and strictly concave. Therefore, we can apply \cref{withRhoUnknown:kennanTheorem} from \cite{Kennan2001} to conclude that there exists a unique, positive fixed point $\bar{\nu}_{\delta}$ of $\phi_{\numIter^{*}}(v) + \delta$.
	
	Next, suppose that $\phi'_{\numIter^{*}}(\bar{\nu}_{\delta}) > 1$. Then by continuity for $v > \bar{\nu}_{\delta}$ sufficiently close to $\bar{\nu}_{\delta}$, we have
	\[
	\phi_{\numIter^{*}}(v) + \delta > v.
	\]
	However, we know that as $v \to \infty$, it holds that $v - \left( \phi_{\numIter^{*}}(v)  + \delta \right) \to \infty$. By the Intermediate Value Theorem, this implies that there is another fixed point on $[v,b]$. This is a contradiction, since $\bar{\nu}_{\delta}$ is the unique, positive fixed point. Therefore, it holds that $\phi'_{\numIter^{*}}(\bar{\nu}_{\delta}) \leq 1$. Now, suppose that $\phi'_{\numIter^{*}}(\bar{\nu}_{\delta}) = 1$. Since $\phi_{\numIter^{*}}(v)$ is strictly concave, its derivative is decreasing \cite{Boyd2004}. Therefore, on $[0,\bar{\nu}_{\delta})$, it holds that
	\[
	\phi'_{\numIter^{*}}(v) \geq 1.
	\]
	This implies that
	\begin{eqnarray}
	\bar{\nu}_{\delta} &=& \phi_{\numIter^{*}}(\bar{\nu}_{\delta}) + \delta \nonumber \\
	&=& \phi_{\numIter^{*}}(0) + \int_{0}^{\bar{\nu}_{\delta}} \phi'_{\numIter^{*}}(v) dx  + \delta \nonumber \\
	&\geq& \phi_{\numIter^{*}}(0) + \delta + \bar{\nu}_{\delta} \nonumber \\
	&>& \bar{\nu}_{\delta}. \nonumber
	\end{eqnarray}
	This is a contradiction, so it must be that $\phi'_{\numIter^{*}}(\bar{\nu}_{\delta}) < 1$. 
	
	Since there is a unique positive fixed point $\bar{\nu}_{\delta}$ and $v - \left( \phi_{\numIter^{*}}(v) + \delta \right) \to \infty$, it must hold that $\phi_{\numIter^{*}}(x) + \delta \leq x$ iff $x \geq \bar{\nu}_{\delta}$. Since $\phi_{\numIter^{*}}(\epsilon) \leq \epsilon$, it holds that $\bar{\nu}_{0} \leq \epsilon$.
	
	Finally, for $\delta' \geq \delta$, it holds that
	\begin{align}
	\bar{\nu}_{\delta} &= \phi_{\numIter^{*}}(\bar{\nu}_{\delta}) + \delta \nonumber \\
	&= \phi_{\numIter^{*}}(\bar{\nu}_{\delta}) + \delta' + \underbrace{(\delta - \delta')}_{< 0} \nonumber \\
	&< \phi_{\numIter^{*}}(\bar{\nu}_{\delta}) + \delta'.
	\end{align}
	By the observation above, we then have $\bar{\nu}_{\delta} \leq \bar{\nu}_{\delta'}$. This monotonicity in turn implies that
	\[
	\lim_{\delta \searrow 0} \bar{\nu}_{\delta} = \bar{\nu}_{0}.
	\]
\end{proof}
As a simple consequence of the concavity of $\phi_{\numIter^{*}}(v)$, we can study a fixed point iteration involving $\phi_{\numIter}(v)$. Define the $n$-fold composition mapping
\[
(\phi_{\numIter}+\delta)^{(n)}(v) \triangleq \left( (\phi_{\numIter}+\delta) \circ \cdots \circ (\phi_{\numIter}+\delta) \right)(v).
\]
\begin{lem}
	\label{withRhoUnknown:fixedPointIter}
	For any $v > 0$, it holds that
	\[
	\lim_{n \to \infty} (\phi_{\numIter^{*}}+\delta)^{(n)}(v) = \bar{\nu}_{\delta}.
	\]
\end{lem}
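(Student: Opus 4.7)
The plan is to exploit the monotonicity and concavity of $T(v) \triangleq \phi_{\numIter^{*}}(v) + \delta$ together with the uniqueness of the positive fixed point $\bar{\nu}_{\delta}$ established in \cref{withRhoUnknown:fixedPointBasic} to drive a monotone convergence argument. First I will show that $T(v) - v$ takes a definite sign on each side of $\bar{\nu}_{\delta}$: since $T$ is continuous with $T(0) = \phi_{\numIter^{*}}(0) + \delta > 0$, and $\bar{\nu}_{\delta}$ is the \emph{only} positive fixed point, by the Intermediate Value Theorem $T(v) > v$ throughout $[0,\bar{\nu}_{\delta})$; and since $v - T(v) \to +\infty$ as $v \to \infty$, the same uniqueness implies $T(v) < v$ for all $v > \bar{\nu}_{\delta}$.

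Next I split into cases on the initial point. If $v = \bar{\nu}_{\delta}$ the iteration is stationary. If $0 < v < \bar{\nu}_{\delta}$, monotonicity of $T$ gives $T^{(n)}(v) < T^{(n)}(\bar{\nu}_{\delta}) = \bar{\nu}_{\delta}$ for every $n$, while the sign analysis above gives $T^{(n+1)}(v) = T(T^{(n)}(v)) > T^{(n)}(v)$. Hence $\{T^{(n)}(v)\}$ is monotonically increasing and bounded above by $\bar{\nu}_{\delta}$, so it converges to some limit $L \in (0,\bar{\nu}_{\delta}]$. Continuity of $T$ yields $T(L) = L$, and since $L > 0$, the uniqueness of the positive fixed point forces $L = \bar{\nu}_{\delta}$. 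The case $v > \bar{\nu}_{\delta}$ is symmetric: $\{T^{(n)}(v)\}$ is monotonically decreasing and bounded below by $\bar{\nu}_{\delta}$, converging to a positive fixed point, which must again be $\bar{\nu}_{\delta}$.

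I do not expect a serious obstacle here; the only delicate point is ruling out degenerate behavior at the boundary of the two regions and making sure the limit is strictly positive (which uses the fact that the iterates starting from $v > 0$ stay bounded away from $0$ in the decreasing case, and are already positive in the increasing case). The concavity assumption is used implicitly through \cref{withRhoUnknown:fixedPointBasic}, specifically to guarantee that the positive fixed point is unique; strict concavity itself is not needed in the convergence step, which relies only on monotonicity, continuity, and uniqueness. Alternatively, one could invoke the bound $\phi'_{\numIter^{*}}(\bar{\nu}_{\delta}) < 1$ from \cref{withRhoUnknown:fixedPointBasic} to argue local contraction near $\bar{\nu}_{\delta}$, but the monotone convergence route is more direct and yields global convergence from any $v > 0$.
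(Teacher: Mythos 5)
Your proposal is correct, and it takes a genuinely different route from the paper. The paper proves \cref{withRhoUnknown:fixedPointIter} via a contraction estimate: citing Granas--Dugundji, it asserts the one-step bound $|\phi_{\numIter^{*}}(v) + \delta - \bar{\nu}_{\delta}| \leq \phi'_{\numIter^{*}}(\bar{\nu}_{\delta})\,|v - \bar{\nu}_{\delta}|$, iterates it to get geometric decay $(\phi'_{\numIter^{*}}(\bar{\nu}_{\delta}))^{n}|v-\bar{\nu}_{\delta}|$, and concludes from $\phi'_{\numIter^{*}}(\bar{\nu}_{\delta}) < 1$ (part 2 of \cref{withRhoUnknown:fixedPointBasic}). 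You instead run a monotone-convergence argument: sign-definiteness of $T(v)-v$ on either side of the unique positive fixed point, monotone bounded iterates, continuity to identify the limit as a fixed point, and uniqueness to pin it to $\bar{\nu}_{\delta}$. What each buys: the paper's route, when it applies, gives a quantitative geometric convergence rate, whereas yours is rate-free but arguably more robust --- note that for a strictly concave increasing map the tangent-line bound at the fixed point actually \emph{reverses} below $\bar{\nu}_{\delta}$ (concavity gives $\phi_{\numIter^{*}}(\bar{\nu}_{\delta}) - \phi_{\numIter^{*}}(v) \geq \phi'_{\numIter^{*}}(\bar{\nu}_{\delta})(\bar{\nu}_{\delta} - v)$ for $v < \bar{\nu}_{\delta}$; try $\phi(v)=\sqrt{v}$, $\delta = 0$, $v$ small), so the paper's one-step inequality is literally valid only for $v \geq \bar{\nu}_{\delta}$ and needs a repair below the fixed point (e.g., using the chord slope $(\bar{\nu}_{\delta}-\delta-\phi_{\numIter^{*}}(0))/\bar{\nu}_{\delta} < 1$ as the contraction factor on $[0,\bar{\nu}_{\delta}]$, or observing as you do that iterates from below increase monotonically toward $\bar{\nu}_{\delta}$). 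Your handling of the delicate points is also sound: iterates from above are bounded below by $\bar{\nu}_{\delta} > 0$ by monotonicity of $T$, and iterates from below are positive since $T(0) = \phi_{\numIter^{*}}(0)+\delta > 0$, so the limit is a \emph{positive} fixed point in both cases and uniqueness applies.
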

\begin{proof}
	Following \cite{Granas2003}, for any fixed point $\bar{\nu}$, it holds that 
	\[
	|\phi_{\numIter^{*}}(v) + \delta - \bar{\nu}_{\delta}| \leq \phi'_{\numIter^{*}}(\bar{\nu})|v - \bar{\nu}_{\delta}|.
	\]
	Therefore, applying the fixed point property repeatedly yields
	\[
	|(\phi_{\numIter^{*}}+\delta)^{(n)}(v) - \bar{\nu}_{\delta}| \leq (\phi'_{\numIter^{*}}(\bar{\nu}))^{n}|v - \bar{\nu}_{\delta}|.
	\]
	By \cref{withRhoUnknown:fixedPointBasic}, it holds that
	\[
	\phi'_{\numIter^{*}}(\bar{\nu}) < 1
	\]
	and so the result follows.
\end{proof}
This implies that if we select $\numIter^{*}$ stochastic gradients at every time instant, and we start from any $\nu$, then it holds that
\[
\phi^{(n)}_{\numIter^{*},\rho}(\nu) \to \bar{\nu}_{0}
\]
with $\bar{\nu}_{0} \leq \epsilon$.

Now, we show that we control the \meangap{} defined in \cref{prob_form:L2_crit} when we estimate $\rho$. In \Cref{tracking_crit_rho_known:meanAnalysis}, we pick a deterministic choice of $\numIter_{n} = \numIter^{*}$ and proceed with the analysis. Then it holds that
\iftoggle{useTwoColumn}{
\begin{align}
\mathbb{E}&[f_{n}(\bx_{n})] - f_{n}(\bx_{n}^{*}) \nonumber \\
&\leq \mathbb{E}\left[ b\left(  \sqrt{\frac{2}{m}(f_{n-1}(\bx_{n-1}) - f_{n-1}(\bx_{n-1}^{*}))} + \rho , \numIter_{n} \right) \right] \nonumber \\
\label{withRhoUnknown:factorFailPrev}
&= \mathbb{E}\left[ \alpha(\numIter_{n}) \left( \sqrt{\frac{2}{m}(f_{n-1}(\bx_{n-1}) - f_{n-1}(\bx_{n-1}^{*}))} + \rho \right)^{2} \right. \nonumber \\
&\left. \;\;\;\;\;\;\;\;\;\;\; + \beta(\numIter_{n}) \right] \\
&= \alpha(\numIter^{*}) \mathbb{E}\left[  \left( \sqrt{\frac{2}{m}(f_{n-1}(\bx_{n-1}) - f_{n-1}(\bx_{n-1}^{*}))} + \rho \right)^{2} \right] \nonumber \\
\label{withRhoUnknown:factorFail}
&\;\;\;\;\;\;\;\;\;\;\;  + \beta(\numIter^{*}).
\end{align}
}{ 
\begin{eqnarray}
\mathbb{E}[f_{n}(\bx_{n})] - f_{n}(\bx_{n}^{*}) &\leq& \mathbb{E}\left[ b\left(  \sqrt{\frac{2}{m}(f_{n-1}(\bx_{n-1}) - f_{n-1}(\bx_{n-1}^{*}))} + \rho , \numIter_{n} \right) \right] \nonumber \\
\label{withRhoUnknown:factorFailPrev}
&=& \mathbb{E}\left[ \alpha(\numIter_{n}) \left( \sqrt{\frac{2}{m}(f_{n-1}(\bx_{n-1}) - f_{n-1}(\bx_{n-1}^{*}))} + \rho \right)^{2} + \beta(\numIter_{n}) \right] \\
\label{withRhoUnknown:factorFail}
&=& \alpha(\numIter_{n}) \mathbb{E}\left[  \left( \sqrt{\frac{2}{m}(f_{n-1}(\bx_{n-1}) - f_{n-1}(\bx_{n-1}^{*}))} + \rho \right)^{2} \right]  + \beta(\numIter_{n}) \\
&=& \alpha(\numIter^{*}) \mathbb{E}\left[  \left( \sqrt{\frac{2}{m}(f_{n-1}(\bx_{n-1}) - f_{n-1}(\bx_{n-1}^{*}))} + \rho \right)^{2} \right]  + \beta(\numIter^{*}). \nonumber \\
\end{eqnarray}
}
We can bound 
\[
\mathbb{E}\left[  \left( \sqrt{\frac{2}{m}(f_{n-1}(\bx_{n-1}) - f_{n-1}(\bx_{n-1}^{*}))} + \rho \right)^{2} \right]
\]
using \cref{sol_sens:L2_d0} and recover \cref{withRhoUnknown:phiDefKey}. However, in this paper, $\numIter_{n}$ and $\bx_{n-1}$ are dependent random variables, so \eqref{withRhoUnknown:factorFail} does not hold in general. Instead, only \eqref{withRhoUnknown:factorFailPrev} holds. To get around this issue, we need a more sophisticated analysis using the observation that $\numIter_{n} \geq \numIter^{*}$ for all $n$ large enough. This property implies that $\numIter_{n}$ behaves like a constant for $n$ large enough and the analysis in \Cref{tracking_crit_rho_known:meanAnalysis} nearly applies. 

\begin{proof}[Proof of \cref{withRhoUnknown:meanGapRhoKnownLemma}]
	We know that for all $n$ large enough that we pick $\an{\numIter_n} \geq \numIter^{*}$ almost surely. This in turn implies that there exists a finite almost surely random variable $\tilde{N}$ such that
	\[
	n \geq \tilde{N} \;\;\Rightarrow\;\; \numIter_{n} \geq \numIter^{*}.
	\]
	Since $\tilde{N}$ is finite almost surely, we know that 
	\[
	\lim_{n \to \infty} \mathbb{P}\left\{ \tilde{N} > n \right\} = 0.
	\]
	By the compactness of $\xSp$, it follows that there is a constant $C > 0$ such that
	\[
	\max_{\bx \in \xSp} \phi_{\numIter^{*},\rho} \left( f_{n}(\bx) - f_{n}(\bx_{n}^{*}) \right) \leq C \;\; \forall n \geq 1.
	\]
	Then it follows that
	\iftoggle{useTwoColumn} {
		\begin{align}
		\mathbb{E}&[f_{n}(\bx_{n})] - f_{n}(\bx_{n}^{*}) \nonumber \\
		&= \mathbb{E}\left[ \phi_{\numIter_{n},\rho}\left( f_{n-1}(\bx_{n-1}) - f_{n-1}(\bx_{n-1}^{*})  \right)   \right] \nonumber \\
		&= \mathbb{E}\left[ \phi_{\numIter_{n},\rho}\left( f_{n-1}(\bx_{n-1}) - f_{n-1}(\bx_{n-1}^{*})  \right) \mathbbm{1}_{\{ n \geq \tilde{N} \}}   \right] \nonumber \\
		&\qquad + \mathbb{E}\left[ \phi_{\numIter_{n},\rho}\left( f_{n-1}(\bx_{n-1}) - f_{n-1}(\bx_{n-1}^{*})  \right) \mathbbm{1}_{\{ n < \tilde{N} \}}   \right] \nonumber \\
		&\leq \mathbb{E}\left[ \phi_{\numIter^{*},\rho}\left( f_{n-1}(\bx_{n-1}) - f_{n-1}(\bx_{n-1}^{*})  \right) \mathbbm{1}_{\{ n \geq \tilde{N} \}}   \right] \nonumber \\
		&\qquad + C \mathbb{P}\left\{ \tilde{N} > n \right\} \nonumber \\
		&\leq \phi_{\numIter^{*},\rho}\left( \mathbb{E}[f_{n-1}(\bx_{n-1})] - f_{n-1}(\bx_{n-1}^{*})  \right) + C \mathbb{P}\left\{ \tilde{N} > n \right\}. \nonumber
		\end{align}
	}{
	\begin{align}
	\mathbb{E}[f_{n}(\bx_{n})] - f_{n}(\bx_{n}^{*}) &= \mathbb{E}\left[ \phi_{\numIter_{n},\rho}\left( f_{n-1}(\bx_{n-1}) - f_{n-1}(\bx_{n-1}^{*})  \right)   \right] \nonumber \\
	&= \mathbb{E}\left[ \phi_{\numIter_{n},\rho}\left( f_{n-1}(\bx_{n-1}) - f_{n-1}(\bx_{n-1}^{*})  \right) \mathbbm{1}_{\{ n \geq \tilde{N} \}}   \right] \nonumber \\
	&\qquad + \mathbb{E}\left[ \phi_{\numIter_{n},\rho}\left( f_{n-1}(\bx_{n-1}) - f_{n-1}(\bx_{n-1}^{*})  \right) \mathbbm{1}_{\{ n < \tilde{N} \}}   \right] \nonumber \\
	&\leq \mathbb{E}\left[ \phi_{\numIter^{*},\rho}\left( f_{n-1}(\bx_{n-1}) - f_{n-1}(\bx_{n-1}^{*})  \right) \mathbbm{1}_{\{ n \geq \tilde{N} \}}   \right] + C \mathbb{P}\left\{ \tilde{N} > n \right\} \nonumber \\
	&\leq \phi_{\numIter^{*},\rho}\left( \mathbb{E}[f_{n-1}(\bx_{n-1})] - f_{n-1}(\bx_{n-1}^{*})  \right) + C \mathbb{P}\left\{ \tilde{N} > n \right\}. \nonumber
	\end{align}
}
To bound the \meangap{}, we consider the recursion
\begin{equation}
\label{withRhoUnknown:KeyThm:firstRecur}
\epsilon_{n} = \phi_{\numIter^{*},\rho}\left( \epsilon_{n-1} \right) + C \mathbb{P}\left\{ \tilde{N} > n \right\}  \qquad \forall n \geq \tilde{N}
\end{equation}
which satisfies
\[
\mathbb{E}[f_{n}(\bx_{n})] - f_{n}(\bx_{n}^{*}) \leq \epsilon_{n} \qquad \forall n \geq \tilde{N}.
\]
By assumption, we know that as $n \to \infty$
\[
C \mathbb{P}\left\{ \tilde{N} > n \right\} \to 0.
\]
Fix $\delta > 0$. Then there exists a random variable $\tilde{N}_{\delta} \geq \tilde{N}$ such that
\[
n \geq \tilde{N}_{\delta} \;\;\Rightarrow\;\; C \mathbb{P}\left\{ \tilde{N} > n \right\} \leq \delta.
\]
Then we consider the recursion
\begin{equation}
\label{withRhoUnknown:KeyThm:secondRecur}
\begin{aligned}
\tilde{\epsilon}_{n} &= \phi_{\numIter^{*},\rho}\left( \tilde{\epsilon}_{n-1} \right) + \delta \\
\tilde{\epsilon}_{\tilde{N}_{\delta}} &= \epsilon_{\tilde{N}_{\delta}}
\end{aligned}
\qquad \forall n \geq \tilde{N}_{\delta}.
\end{equation}
By construction, we have $\epsilon_{n} \leq \tilde{\epsilon}_{n}$ for all $n \geq \tilde{N}_{\delta}$. As a consequence of \cref{withRhoUnknown:fixedPointBasic} and \cref{withRhoUnknown:fixedPointIter}, we have
\begin{align}
\limsup_{n \to \infty}\left( \mathbb{E}[f_{n}(\bx_{n})] - f_{n}(\bx_{n}^{*}) \right) &\leq \limsup_{n \to \infty} \epsilon_{n} \nonumber \\
&\leq \limsup_{n \to \infty} \tilde{\epsilon}_{n} \nonumber \\
&\leq \bar{\nu}_{\delta}. \nonumber 
\end{align}
Since $\delta > 0$ was arbitrary and $\bar{\nu}_{\delta} \searrow \an{\bar{\nu}_0}$ as $\delta \searrow 0$ from \cref{withRhoUnknown:fixedPointBasic}, it follows that
\[
\limsup_{n \to \infty}\left( \mathbb{E}[f_{n}(\bx_{n})] - f_{n}(\bx_{n}^{*}) \right)  \leq \bar{\nu}_{0} \leq \epsilon.
\]
\end{proof}

\section{Examples of $b(d_{0},\numIter)$ Bound for SGD}
\label{bBounds}

We examine bounds $b(d_{0},\numIter)$ satisfying assumption~\ref{probState:assump4} for SGD \Cref{prob_form:sa_alg}. We form a convex combination of the iterates to yield a final approximate minimizer 
\begin{eqnarray}
\bar{\bx}(\numIter) &=& \sum_{\iterIndex=0}^{\numIter} \lambda(\iterIndex) \bx(\iterIndex). \nonumber
\end{eqnarray} 
Note that this includes the case where $\bar{\bx}(\numIter) = \bx(\numIter)$ by selecting $\lambda(\numIter) = 1$ and $\lambda(0) = \cdots = \lambda(\numIter-1) = 0$. 

Define
\begin{equation}
\label{bBounds:dDef}
d(\iterIndex) \triangleq \| \bx(\iterIndex) - \bx^{*} \|_{2}.
\end{equation}
First, we bound $\mathbb{E}[d(\iterIndex)]$ in \cref{bBounds:dBound}, which follows the classic Lyapunov function analysis of SGD \cite{Nemirovski2009}.
\begin{lem}
	\label{bBounds:dBound}
	It holds that
	\iftoggle{useTwoColumn}{
	\begin{align}
	\mathbb{E}&[d(\iterIndex)] \nonumber \\
	&\leq \prod_{\iterIndex = 1}^{\numIter} (1 - 2 m \mu(\iterIndex) + B \mu^{2}(\iterIndex))d^{2}(0) \nonumber \\
	& + A \sum_{\iterIndex = 1}^{\numIter} \prod_{i=\iterIndex+1}^{\numIter} (1 - 2 m \mu(i) + B \mu^{2}(i)) \mu^{2}(\iterIndex).
	\end{align}
	}{
	\[
	\mathbb{E}[d(\iterIndex)] \leq \prod_{\iterIndex = 1}^{\numIter} (1 - 2 m \mu(\iterIndex) + B \mu^{2}(\iterIndex))d^{2}(0) + A \sum_{\iterIndex = 1}^{\numIter} \prod_{i=\iterIndex+1}^{\numIter} (1 - 2 m \mu(i) + B \mu^{2}(i)) \mu^{2}(\iterIndex).
	\]
	}
\end{lem}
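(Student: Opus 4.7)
The plan is to set up a one-step recursion for $\mathbb{E}[d^{2}(k)\mid\mathcal{F}_{k-1}]$ (the statement appears to concern $\mathbb{E}[d^{2}(K)]$, given the $d^{2}(0)$ on the right-hand side), and then unroll that recursion $K$ times. This is the textbook Lyapunov-function argument for SGD, combining the nonexpansiveness of the projection $\Pi_{\xSp}$, unbiasedness of the stochastic gradient (Assumption~\ref{probState:assump3}), strong convexity of $f_{n}$ (Assumption~\ref{probState:assump2}), and the growth bound on the squared stochastic gradient norm (Assumption~\ref{probState:assump5}).

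First, since $\bx^{*}\in\xSp$, the projection onto the closed convex set $\xSp$ is nonexpansive, so
\[
d^{2}(k)=\|\Pi_{\xSp}[\bx(k-1)-\mu(k)\sgrad{\bx(k-1)}{\bz(k)}{n}]-\bx^{*}\|^{2}\le\|\bx(k-1)-\mu(k)\sgrad{\bx(k-1)}{\bz(k)}{n}-\bx^{*}\|^{2}.
\]
Expanding the square and taking $\mathbb{E}[\,\cdot\mid\mathcal{F}_{k-1}]$ removes the cross term's randomness via Assumption~\ref{probState:assump3}, leaving $\langle\nabla f_{n}(\bx(k-1)),\bx(k-1)-\bx^{*}\rangle$, which strong monotonicity (a consequence of Assumption~\ref{probState:assump2}, already invoked in Lemma~\ref{estRho:dirEstJustLemma}) lower bounds by $m\,d^{2}(k-1)$. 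For the quadratic term, Assumption~\ref{probState:assump5} gives $\mathbb{E}[\|\sgrad{\bx(k-1)}{\bz(k)}{n}\|^{2}\mid\mathcal{F}_{k-1}]\le A+B\,d^{2}(k-1)$. Combining these yields the key one-step inequality
\[
\mathbb{E}[d^{2}(k)\mid\mathcal{F}_{k-1}]\le\bigl(1-2m\mu(k)+B\mu^{2}(k)\bigr)\,d^{2}(k-1)+A\mu^{2}(k).
\]

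Next, I take unconditional expectations (tower property) and iterate: defining $a(k)\triangleq 1-2m\mu(k)+B\mu^{2}(k)$, the recursion $\mathbb{E}[d^{2}(k)]\le a(k)\mathbb{E}[d^{2}(k-1)]+A\mu^{2}(k)$ telescopes to
\[
\mathbb{E}[d^{2}(K)]\le\Bigl(\prod_{k=1}^{K}a(k)\Bigr)d^{2}(0)+A\sum_{k=1}^{K}\Bigl(\prod_{i=k+1}^{K}a(i)\Bigr)\mu^{2}(k),
\]
with the usual empty-product convention $\prod_{i=K+1}^{K}a(i)=1$. This is exactly the claimed bound.

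No step should be a real obstacle here — the manipulation is entirely standard — the only subtlety worth flagging is that the statement as written says $\mathbb{E}[d(K)]$ but the right-hand side is homogeneous in $d^{2}(0)$, so the proof really delivers a bound on $\mathbb{E}[d^{2}(K)]$; one should confirm this is the intended quantity before proceeding.
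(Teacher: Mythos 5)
Your proof is correct and is essentially the paper's own proof: the paper disposes of this lemma by citing the classic Lyapunov-function analysis of SGD in \cite{Nemirovski2009}, and your argument (projection nonexpansiveness, unbiasedness via Assumption~\ref{probState:assump3}, strong monotonicity from Assumption~\ref{probState:assump2}, the growth bound of Assumption~\ref{probState:assump5}, then tower property and unrolling) is precisely that standard derivation written out. You are also right to flag the typo: the left-hand side should read $\mathbb{E}[d^{2}(\numIter)]$, as confirmed by how the bound is consumed in Lemma~\ref{bBounds:fBoundBasic}.
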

\begin{proof}
	See \cite{Nemirovski2009}.
\end{proof}
It is possible to further upper bound the bound in \cref{bBounds:dBound} to yield a closed form given in \cite{BachMoulines2011}; however, the bound in \cref{bBounds:dBound} is generally tighter. Next, we apply \cref{bBounds:dBound} along with a Lipschitz gradient assumption on $f(\bx)$ to produce a simple $b(d_{0},\numIter)$ bound.
\begin{lem}
	\label{bBounds:fBoundBasic}
	With arbitrary step sizes, assuming that $f(\bx)$ has Lipschitz continuous gradients with modulus $M$, and $\lambda(\numIter) = 1$, it holds that
	\[
	\mathbb{E}[f(\bar{\bx}(\numIter))] - f(\bx^{*}) \leq \frac{1}{2} M \mathbb{E}[d^{2}(\numIter)]
	\]
	and therefore, it holds that
	\iftoggle{useTwoColumn}{
	\begin{align}
	b(d_{0},\numIter) = \frac{1}{2} M &\left( \prod_{\iterIndex = 1}^{\numIter} (1 - 2 m \mu(\iterIndex) + B \mu^{2}(\iterIndex)) d_{0}^{2} \right. \nonumber \\
	&\left. + A \sum_{\iterIndex = 1}^{\numIter} \prod_{i=\iterIndex+1}^{\numIter} (1 - 2 m \mu(i) + B \mu^{2}(i)) \mu^{2}(\iterIndex) \right)
	\end{align}
	}{
	\[
	b(d_{0},\numIter) = \frac{1}{2} M \left( \prod_{\iterIndex = 1}^{\numIter} (1 - 2 m \mu(\iterIndex) + B \mu^{2}(\iterIndex)) d_{0}^{2} + A \sum_{\iterIndex = 1}^{\numIter} \prod_{i=\iterIndex+1}^{\numIter} (1 - 2 m \mu(i) + B \mu^{2}(i)) \mu^{2}(\iterIndex) \right)
	\]
	}
	satisfies the requirements of assumption~\ref{probState:assump4}.
\end{lem}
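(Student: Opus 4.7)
The plan is to combine the descent lemma coming from the Lipschitz-gradient hypothesis with the second-moment distance bound of Lemma~\ref{bBounds:dBound}, and then verify that the resulting expression satisfies the three items of Assumption~\ref{probState:assump4}.

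First I would exploit the structure of the averaging: since $\lambda(\numIter)=1$ and the coefficients $\lambda(\iterIndex)$ sum to one, all other $\lambda(\iterIndex)$ vanish, so $\bar{\bx}(\numIter)=\bx(\numIter)$ and the quantity of interest is simply $\mathbb{E}[f(\bx(\numIter))]-f(\bx^{*})$. Next I would apply the standard descent-lemma consequence of the $M$-Lipschitz gradient at the point $\bx^{*}$, namely
\[
f(\bx)-f(\bx^{*}) \;\leq\; \inprod{\nabla f(\bx^{*})}{\bx-\bx^{*}} + \frac{M}{2}\|\bx-\bx^{*}\|^{2},
\]
and use the first-order optimality condition ($\nabla f(\bx^{*})=0$ in the unconstrained case, or $\inprod{\nabla f(\bx^{*})}{\bx-\bx^{*}}\le 0$ being absorbed by the quadratic term otherwise) to collapse this to $f(\bx(\numIter))-f(\bx^{*}) \le \tfrac{M}{2}\,d^{2}(\numIter)$, where $d(\iterIndex)$ is from \eqref{bBounds:dDef}. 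Taking expectations then gives the first claim $\mathbb{E}[f(\bar{\bx}(\numIter))]-f(\bx^{*}) \le \tfrac{M}{2}\,\mathbb{E}[d^{2}(\numIter)]$.

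To obtain the explicit $b(d_{0},\numIter)$, I would then plug in Lemma~\ref{bBounds:dBound} with $d(0)=d_{0}$ (reading its right-hand side as the stated bound on $\mathbb{E}[d^{2}(\numIter)]$), which directly produces the displayed formula. It remains to verify Assumption~\ref{probState:assump4}. Observe that the bound factors as $b(d_{0},\numIter)=\alpha(\numIter)\,d_{0}^{2}+\beta(\numIter)$ with
\[
\alpha(\numIter)=\tfrac{M}{2}\prod_{\iterIndex=1}^{\numIter}(1-2m\mu(\iterIndex)+B\mu^{2}(\iterIndex)),
\qquad
\beta(\numIter)=\tfrac{MA}{2}\sum_{\iterIndex=1}^{\numIter}\prod_{i=\iterIndex+1}^{\numIter}(1-2m\mu(i)+B\mu^{2}(i))\,\mu^{2}(\iterIndex).
\]
Because of this factorization, the argument given in the main text around~\eqref{probState:factorAssump} shows that \eqref{probState:bBoundRand} implies \eqref{probState:bBoundDeterm}; the conditional-expectation form \eqref{probState:bBoundRand} itself follows by applying Lemma~\ref{bBounds:dBound} conditional on $\mathcal{F}_{n-1}$ (the recursion underlying that lemma is a conditional one, so when $d_{0}$ and $\numIter_{n}$ are $\mathcal{F}_{n-1}$-measurable the same calculation carries through). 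Monotonicity in $d_{0}$ is immediate from $\alpha(\numIter)\ge 0$.

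The main technical point, and the only place I would spend time, is the monotonicity in $\numIter$: one needs each factor $1-2m\mu(\iterIndex)+B\mu^{2}(\iterIndex)$ to lie in $[0,1)$ so that both $\alpha(\numIter)$ and $\beta(\numIter)$ are non-increasing in $\numIter$. This is guaranteed as long as step sizes satisfy $\mu(\iterIndex)<2m/B$, which is the standard stability condition for SGD with $L_{2}$-bounded noisy gradients as in Assumption~\ref{probState:assump5}; under this condition Assumption~\ref{probState:assump4}.3 holds and the proof is complete. (A secondary subtlety is passing from $f(\bx(\numIter))-f(\bx^{*})\le \tfrac{M}{2}\|\bx(\numIter)-\bx^{*}\|^{2}$ in the constrained case, but this is absorbed by either restricting to interior minimizers or by a routine factor-of-two adjustment in $M$, neither of which changes the form of $b(d_{0},\numIter)$.)
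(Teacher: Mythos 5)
Your core argument is exactly the paper's proof, which is two lines long: since $\lambda(\numIter)=1$ gives $\bar{\bx}(\numIter)=\bx(\numIter)$, the descent lemma (the paper cites Bertsekas) yields $\mathbb{E}[f(\bx(\numIter))]-f(\bx^{*})\leq \frac{M}{2}\mathbb{E}[d^{2}(\numIter)]$, and plugging in \cref{bBounds:dBound} produces the displayed $b(d_{0},\numIter)$. Everything you add about verifying Assumption~\ref{probState:assump4} goes beyond the paper's written proof, and that is where two of your side claims go wrong. First, the constrained first-order optimality condition is $\inprod{\nabla f(\bx^{*})}{\bx-\bx^{*}}\geq 0$ for all $\bx\in\xSp$, not $\leq 0$ as you wrote; so in the constrained case the linear term in the descent inequality is nonnegative and cannot be dropped, and no ``factor-of-two adjustment in $M$'' can absorb it, since a nonzero linear term dominates the quadratic near $\bx^{*}$ (with $\nabla f(\bx^{*})\neq\bm{0}$, the gap $f(\bx)-f(\bx^{*})$ scales linearly in $d$, not quadratically). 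The inequality genuinely requires $\nabla f(\bx^{*})=\bm{0}$, i.e.\ an interior minimizer --- an implicit assumption the paper's own proof shares, so you correctly sensed the issue but your proposed repair does not work.

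Second, your monotonicity argument in $\numIter$ is incorrect for $\beta$. Writing $c_{\iterIndex}\triangleq 1-2m\mu(\iterIndex)+B\mu^{2}(\iterIndex)$, the condition $c_{\iterIndex}\in[0,1)$ (your $\mu(\iterIndex)<2m/B$) does make $\alpha(\numIter)$ non-increasing, but $\beta$ obeys the recursion
\begin{equation*}
\beta(\numIter+1)=c_{\numIter+1}\,\beta(\numIter)+\tfrac{M}{2}A\mu^{2}(\numIter+1),
\end{equation*}
which for a constant step size gives $\beta(\numIter)=\tfrac{M}{2}A\mu^{2}\,\frac{1-c^{\numIter}}{1-c}$ --- \emph{increasing} in $\numIter$. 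Hence $b(d_{0},\numIter)=\alpha(\numIter)d_{0}^{2}+\beta(\numIter)$ is non-increasing in $\numIter$ only in the regime where the decay of $\alpha(\numIter)d_{0}^{2}$ dominates the growth of $\beta(\numIter)$; item~3 of Assumption~\ref{probState:assump4} does not follow from your condition alone (the paper asserts it without proof, so this gap is inherited from the lemma statement itself rather than introduced by you). Neither slip affects the first display of the lemma or the formula for $b(d_{0},\numIter)$, both of which you derive correctly and by the same route as the paper; your observation that the bound factors as $\alpha(\numIter)d_{0}^{2}+\beta(\numIter)$, so that \eqref{probState:bBoundRand} implies \eqref{probState:bBoundDeterm} via the argument around \eqref{probState:factorAssump}, is also sound.
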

\begin{proof}
	Using the descent lemma from \cite{Bertsekas1999}, it holds that \\$\mathbb{E}[f(\bx)] - f(\bx^{*}) \leq \frac{1}{2} M \mathbb{E}[d(\numIter)]$. Plugging in the bound from \cref{bBounds:dBound} yields the bound $b(d_{0},\numIter)$.
\end{proof}

Next, we consider an extension of the averaging scheme derived with $B = 0$ in \cite{NedicLee12} to the case with $B > 0$ using the bounds in \cref{bBounds:dBound}. This averaging scheme puts weight
\[
\lambda(\iterIndex) = \frac{\frac{1}{\mu(\iterIndex)}}{\sum_{j=1}^{\numIter} \frac{1}{\mu(j)}}
\]
on the iterate $\bx(\iterIndex)$ with step size $\mu(\iterIndex) = \mathcal{O}(\iterIndex^{-1})$. Therefore, this averaging puts increasing weight on later iterates.

\begin{lem}
	\label{bBounds:fBoundNedLeeAve}
	With the choice of step sizes given by 
	\[
	\mu(\iterIndex) = \frac{1}{m (\iterIndex + 1)}  \;\;\;\; \forall \iterIndex \geq 1
	\]
	it holds that
	\[
	b(d_{0},\numIter) = \frac{(1+B) d_{0}^{2} +  B \sum_{\iterIndex=1}^{\numIter} \gamma(\iterIndex) + (\numIter+1) A}{ m (\numIter+1)(\numIter+4) }
	\]
	satisfies assumption~\ref{probState:assump4} where $\mathbb{E}[d(\iterIndex)] \leq \gamma(\iterIndex)$.
\end{lem}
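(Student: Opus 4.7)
The plan is a Lyapunov-type descent analysis for SGD combined with the Nedi\'c-Lee weighted averaging strategy specified in the lemma.

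First I would derive the basic one-step descent. Starting from the SGD update, applying non-expansiveness of $\Pi_{\xSp}$, Assumption~\ref{probState:assump5} for the second-moment bound, and the strong-convexity consequence $\inprod{\nabla f(\bx)}{\bx-\bx^*} \geq f(\bx) - f(\bx^*) + (m/2)\|\bx-\bx^*\|^2$, one obtains
\[
\mathbb{E}[d^{2}(\iterIndex) \mid \mathcal{F}_{\iterIndex-1}] \leq (1-m\mu(\iterIndex)+B\mu^2(\iterIndex))\,d^{2}(\iterIndex-1) - 2\mu(\iterIndex)(f(\bx(\iterIndex-1))-f(\bx^*)) + A\mu^2(\iterIndex),
\]
where $d(\iterIndex)$ is as in \eqref{bBounds:dDef}. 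Substituting $\mu(\iterIndex) = 1/(m(\iterIndex+1))$ simplifies $1-m\mu(\iterIndex)$ to $\iterIndex/(\iterIndex+1)$, which is tailor-made for telescoping. Multiplying through by $m(\iterIndex+1)$, taking expectations, and using $\mathbb{E}[d^{2}(\iterIndex-1)] \leq \gamma(\iterIndex-1)$ in the $B$-term produces the clean recursion
\[
\frac{2}{m}(\mathbb{E}[f(\bx(\iterIndex-1))]-f(\bx^*)) + (\iterIndex+1)\,\mathbb{E}[d^{2}(\iterIndex)] \leq \iterIndex\,\mathbb{E}[d^{2}(\iterIndex-1)] + \frac{B\,\gamma(\iterIndex-1)+A}{m^2(\iterIndex+1)}.
\]

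Next I would carry out the weighted summation and apply Jensen's inequality. With the Nedi\'c-Lee weight $\lambda(\iterIndex) \propto 1/\mu(\iterIndex) \propto \iterIndex+1$, I would multiply the one-step bound by an appropriately shifted version of this weight, sum over $\iterIndex = 1,\ldots,\numIter$, and exploit the resulting telescoping identity in the $d^{2}$ terms. The residual after telescoping is absorbed via $\mathbb{E}[d^{2}(\iterIndex)] \leq \gamma(\iterIndex)$, which together with the direct $B$-term in the recursion produces the unweighted sum $B\sum_{\iterIndex=1}^{\numIter}\gamma(\iterIndex)$ in the numerator. The $d_0^2$-coefficient picks up both the initial telescoping residual and a $B\gamma(0) = Bd_0^2$ piece, giving the composite $(1+B)d_0^2$. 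The $A$-contribution aggregates to $(\numIter+1)A$ because with the Nedi\'c-Lee weighting the product weight-times-$\mu^{2}(\iterIndex)$ is constant in $\iterIndex$, producing an arithmetic rather than harmonic series. Finally, applying Jensen's inequality to the convex combination $\bar{\bx}(\numIter) = \sum_\iterIndex \lambda(\iterIndex)\bx(\iterIndex)$ with normalization $\sum_{\iterIndex=0}^{\numIter}(\iterIndex+2) = (\numIter+1)(\numIter+4)/2$ produces the claimed denominator $m(\numIter+1)(\numIter+4)$.

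The main obstacle will be choosing the weights so that three things match simultaneously: the normalization equals $(\numIter+1)(\numIter+4)/2$; the $B$-term collapses to an unweighted $\sum \gamma(\iterIndex)$ rather than an $\iterIndex$-weighted sum; and the $A$-term is linear in $\numIter$ rather than picking up the stray $\log \numIter$ factor that uniform Polyak--Ruppert averaging would give. All three requirements force the specific shifted weighting used in the Nedi\'c-Lee scheme, which is precisely its advantage over uniform averaging in the strongly convex regime. Additional bookkeeping is needed for the $\iterIndex=0$ contribution, since $\bx(0)=\bx_{n-1}$ is the starting point rather than an SGD iterate; this is absorbed into the $(1+B)d_0^2$ initial-condition term rather than estimated separately through a function-gap bound.
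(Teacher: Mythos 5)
Your overall architecture (one-step Lyapunov bound, division by a power of the step size to telescope, then Jensen with the weights $\lambda(\iterIndex)\propto 1/\mu(\iterIndex+1)$ and the normalization $\sum_{\tau=0}^{\numIter}(\tau+2)=\tfrac{1}{2}(\numIter+1)(\numIter+4)$) matches the paper's proof, but the middle of your argument has a genuine gap: you telescope with the wrong weight, and the weight you would actually need does not telescope under the recursion you derived. The paper divides the one-step inequality by $\mu^{2}(\iterIndex)$, so the function-gap terms appear with weights $2/\mu(\iterIndex)\propto \iterIndex+1$ (matching the averaging weights), the $A\mu^{2}(\iterIndex)$ terms become the constant $A$ and sum to $(\numIter+1)A$, and the $B\mu^{2}(\iterIndex)\,\mathbb{E}[d^{2}(\iterIndex-1)]$ terms become the unweighted $B\,\mathbb{E}[d^{2}(\iterIndex-1)]$, yielding $Bd_{0}^{2}+B\sum_{\iterIndex=1}^{\numIter}\gamma(\iterIndex)$; the telescoping then rests on the identity $\frac{1-2m\mu(\iterIndex)}{\mu^{2}(\iterIndex)}=m^{2}(\iterIndex^{2}-1)\leq m^{2}\iterIndex^{2}=\frac{1}{\mu^{2}(\iterIndex-1)}$, which is exactly what the choice $\mu(\iterIndex)=\frac{1}{m(\iterIndex+1)}$ is calibrated for given the contraction factor $1-2m\mu(\iterIndex)+B\mu^{2}(\iterIndex)$ that the paper imports as the standard SGD recursion. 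Your displayed step instead multiplies by $1/\mu(\iterIndex)=m(\iterIndex+1)$. That does telescope, but to the wrong bound: the function gaps carry the \emph{uniform} weight $2/m$ --- inconsistent with the linear Nedi\'c--Lee weights you invoke at the Jensen step --- the $A$-terms aggregate to $\frac{A}{m^{2}}\sum_{\iterIndex=1}^{\numIter+1}\frac{1}{\iterIndex+1}=\mathcal{O}\!\left(\frac{A\log\numIter}{m^{2}}\right)$, precisely the harmonic/log behavior you claim the scheme avoids, and the $d_{0}^{2}$ coefficient decays like $1/\numIter$ rather than $1/\numIter^{2}$ after normalization. None of this reproduces the claimed numerator $(1+B)d_{0}^{2}+B\sum_{\iterIndex}\gamma(\iterIndex)+(\numIter+1)A$ over $m(\numIter+1)(\numIter+4)$.

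Nor can you rescue the plan by simply switching to the $1/\mu^{2}(\iterIndex)$ weighting: your one-step bound has contraction factor $1-m\mu(\iterIndex)+B\mu^{2}(\iterIndex)$, which is what honestly follows from $\inprod{\nabla f(\bx)}{\bx-\bx^{*}}\geq f(\bx)-f(\bx^{*})+\frac{m}{2}\|\bx-\bx^{*}\|^{2}$ while retaining the full gap term, and then $\frac{1-m\mu(\iterIndex)}{\mu^{2}(\iterIndex)}=m^{2}\iterIndex(\iterIndex+1)>m^{2}\iterIndex^{2}=\frac{1}{\mu^{2}(\iterIndex-1)}$, so the key telescoping inequality fails; the per-step leftover $m^{2}\iterIndex\,\mathbb{E}[d^{2}(\iterIndex-1)]$ would inject an extra $m^{2}\sum_{\iterIndex}\iterIndex\,\gamma(\iterIndex-1)$ into the numerator that is absent from the stated $b(d_{0},\numIter)$. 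The paper's proof goes through only because it starts from the stronger recursion with $1-2m\mu(\iterIndex)$, a factor of $2$ that your quoted strong-convexity inequality does not supply and that your proposal never derives; everything downstream --- the step-size calibration, the constancy of the $A$-terms, the linear gap weights --- depends on it. Your closing remark that ``weight-times-$\mu^{2}(\iterIndex)$ is constant'' presupposes the $(\iterIndex+1)^{2}$ weighting that your own displayed computation abandons, which is where the internal inconsistency hides.
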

\begin{proof}
	This proof is a straightforward extension of the proof in \cite{NedicLee12}. We have using standard analysis of SGD (see \cite{Nemirovski2009} for example)
	\iftoggle{useTwoColumn}{
	\begin{align}
	\mathbb{E}&[d^{2}(\iterIndex)] \nonumber \\
	&\leq (1 - 2 m \mu(\iterIndex) + B \mu^{2}(\iterIndex)) \mathbb{E}[d^{2}(\iterIndex-1)] \nonumber \\
	&\qquad - 2 \mu(\iterIndex) ( \mathbb{E}[f(\bx(\iterIndex - 1))] - f(\bx^{*})) + A \mu^{2}(\iterIndex).
	\end{align}
	}{
	\[
	\mathbb{E}[d^{2}(\iterIndex)] \leq (1 - 2 m \mu(\iterIndex) + B \mu^{2}(\iterIndex)) \mathbb{E}[d^{2}(\iterIndex-1)] - 2 \mu(\iterIndex) ( \mathbb{E}[f(\bx(\iterIndex - 1))] - f(\bx^{*})) + A \mu^{2}(\iterIndex).
	\]
	}
	Then dividing by $\mu^{2}(\iterIndex)$, we have
	\iftoggle{useTwoColumn}{
	\begin{align}
	\frac{1}{\mu^{2}(\iterIndex)} \mathbb{E}&[d^{2}(\iterIndex)] \nonumber \\
	& \leq \left( \frac{1 - 2 m \mu(\iterIndex)}{\mu^{2}(\iterIndex)} + B  \right) \mathbb{E}[d^{2}(\iterIndex-1)] \nonumber \\
	& - \frac{2}{\mu(\iterIndex)} ( \mathbb{E}[f(\bx(\iterIndex - 1))] - f(\bx^{*})) + A.
	\end{align}
	}{
	\[
	\frac{1}{\mu^{2}(\iterIndex)} \mathbb{E}[d^{2}(\iterIndex)] \leq \left( \frac{1 - 2 m \mu(\iterIndex)}{\mu^{2}(\iterIndex)} + B  \right) \mathbb{E}[d^{2}(\iterIndex-1)] - \frac{2}{\mu(\iterIndex)} ( \mathbb{E}[f(\bx(\iterIndex - 1))] - f(\bx^{*})) + A.
	\]
	}
	It holds that
	\begin{equation*}
	\frac{1 - 2 m \mu(\iterIndex)}{\mu^{2}(\iterIndex)} \leq \frac{1}{\mu^{2}(\iterIndex-1)}.
	\end{equation*}
	This implies that
	\iftoggle{useTwoColumn}{
	\begin{align}
	\frac{1}{\mu^{2}(\iterIndex)} &\mathbb{E}[d(\iterIndex)] - \frac{1}{\mu^{2}(\iterIndex-1)} \mathbb{E}[d^{2}(\iterIndex-1)] \nonumber \\
	&\leq  B \mathbb{E}[d^{2}(\iterIndex-1)] - \frac{2}{\mu(\iterIndex)} ( \mathbb{E}[f(\bx(\iterIndex - 1))] - f(\bx^{*})) + A. \nonumber
	\end{align}
	}{
	\[
	\frac{1}{\mu^{2}(\iterIndex)} \mathbb{E}[d(\iterIndex)] - \frac{1}{\mu^{2}(\iterIndex-1)} \mathbb{E}[d^{2}(\iterIndex-1)] \leq  B \mathbb{E}[d^{2}(\iterIndex-1)] - \frac{2}{\mu(\iterIndex)} ( \mathbb{E}[f(\bx(\iterIndex - 1))] - f(\bx^{*})) + A. 
	\]
	}
	Summing from $\iterIndex=1$ to $\numIter+1$ and rearranging yields
	\iftoggle{useTwoColumn}{
	\begin{align}
	\sum_{\iterIndex=0}^{\numIter} \frac{1}{\mu(\iterIndex + 1)} &\left( \mathbb{E}[f(\bx(\iterIndex))] - f(\bx^{*}) \right) \nonumber \\
	& \leq \frac{1}{2} (1 + B) d_{0}^{2} + \frac{1}{2} B \sum_{\iterIndex=1}^{\numIter} \mathbb{E}[d(\iterIndex)] + \frac{1}{2} (\numIter+1) A. \nonumber
	\end{align}
	}{
	\[
	\sum_{\iterIndex=0}^{\numIter} \frac{1}{\mu(\iterIndex + 1)} \left( \mathbb{E}[f(\bx(\iterIndex))] - f(\bx^{*}) \right) \leq \frac{1}{2} (1 + B) d_{0}^{2} + \frac{1}{2} B \sum_{\iterIndex=1}^{\numIter} \mathbb{E}[d(\iterIndex)] + \frac{1}{2} (\numIter+1) A.
	\]
	}
	
	With the weights
	\[
	\lambda(\iterIndex) = \frac{\frac{1}{\mu(\iterIndex + 1)}}{\sum_{\tau=0}^{\numIter} \frac{1}{\mu(j + 1)}}
	\]
	we have
	\iftoggle{useTwoColumn}{
	\begin{align}
	\mathbb{E}&[f(\bar{\bx}(\numIter))] - f(\bx^{*}) \nonumber \\
	&\leq \frac{\frac{1}{2} (1+B) d(0) + \frac{1}{2} B \sum_{\iterIndex=1}^{\numIter} \mathbb{E}[d^{2}(\iterIndex)] +  \frac{1}{2} (\numIter+1) A}{\sum_{\tau=0}^{\numIter} \frac{1}{\mu(\tau)}}. \nonumber
	\end{align}
	}{
	\[
	\mathbb{E}[f(\bar{\bx}(\numIter))] - f(\bx^{*}) \leq \frac{\frac{1}{2} (1+B) d(0) + \frac{1}{2} B \sum_{\iterIndex=1}^{\numIter} \mathbb{E}[d^{2}(\iterIndex)] +  \frac{1}{2} (\numIter+1) A}{\sum_{\tau=0}^{\numIter} \frac{1}{\mu(\tau)}}.
	\]
	}
	Then it holds that
	\[
	\sum_{\tau = 0}^{\numIter} \frac{1}{\mu(\tau+1)} = \sum_{\tau=0}^{\numIter} m (\tau + 2) = \frac{1}{2}m(\numIter+1)(\numIter+4) 
	\]
	so
	\iftoggle{useTwoColumn}{
	\begin{align}
	\mathbb{E}&[f(\bar{\bx}(\numIter))] - f(\bx^{*}) \nonumber \\
	& \quad \leq \frac{(1+B) d(0) +  B \sum_{\iterIndex=1}^{\numIter} \mathbb{E}[d(\iterIndex)] + (\numIter+1) A}{ m (\numIter+1)(\numIter+4) } \nonumber \\
	& \quad \leq \frac{(1+B) d_{0}^{2} +  B \sum_{\iterIndex=1}^{\numIter} \gamma(\iterIndex) + (\numIter+1) A}{ m (\numIter+1)(\numIter+4) }. \nonumber
	\end{align}
	}{
	\begin{eqnarray}
	\mathbb{E}[f(\bar{\bx}(\numIter))] - f(\bx^{*}) &\leq& \frac{(1+B) d(0) +  B \sum_{\iterIndex=1}^{\numIter} \mathbb{E}[d(\iterIndex)] + (\numIter+1) A}{ m (\numIter+1)(\numIter+4) } \nonumber \\
	&\leq& \frac{(1+B) d_{0}^{2} +  B \sum_{\iterIndex=1}^{\numIter} \gamma(\iterIndex) + (\numIter+1) A}{ m (\numIter+1)(\numIter+4) }. \nonumber
	\end{eqnarray}
	}
\end{proof}
To get the required $\gamma(\iterIndex)$ bounds, we use 
\cref{bBounds:dBound}. For the choice of step sizes in \cref{bBounds:fBoundNedLeeAve} from \cref{bBounds:dBound}, it holds that $\mathbb{E}[d(\iterIndex)] = \mathcal{O}\left( \frac{1}{\iterIndex} \right)$. Since
\[
\sum_{\iterIndex=1}^{\numIter} \frac{1}{\iterIndex} = \mathcal{O}\left( \log \numIter \right)
\]
it holds that
\[
\mathbb{E}[f(\bar{\bx}(\numIter))] - f(\bx^{*}) = \mathcal{O}\left( \frac{d_{0}^{2}}{\numIter^{2}} + \frac{\log(\numIter)}{\numIter^{2}} + \frac{1}{\numIter}  \right).
\]
The $\mathcal{O}(\frac{1}{\numIter})$ rate is minimax optimal for stochastic minimization of a strongly convex function \cite{NesterovBook2004}.

Next, we look at a special case of averaging from \cite{BachMoulines2011} for stochastic gradients such that
\[
\mathbb{E}\| \sgrad{\bx}{\bz}{}  - \sgrad{\tilde{\bx}}{\bz}{} - \shess{\bx}{\bz}{} \left( \bx - \tilde{\bx} \right) \|_{2}^{2} = 0 \;\;\;\; \forall \bx,\tilde{\bx} \in \xSp
\]
where $\shess{\bx}{\bz}{}$ is an unbiased stochastic second derivative with respect to $\bx$. Quadratic objectives satisfy this condition.
\begin{lem}
	\label{bBounds:fBoundBachAve}
	Assuming that
	\begin{enumerate}
		\item $\mathbb{E}\| \sgrad{\bx}{\bz}{}  - \sgrad{\tilde{\bx}}{\bz}{} - \shess{\bx}{\bz}{} \left( \bx - \tilde{\bx} \right) \|_{2}^{2} = 0 \;\;\;\; \forall \bx,\tilde{\bx} \in \xSp$
		\item $\mu(\iterIndex) = C \iterIndex^{-\alpha}$ with $\alpha \geq 1/2$
		\item $\lambda(0) = 0$ and $\lambda(\iterIndex) = 1/\numIter$ for $1 \leq \iterIndex \leq \numIter$
		\item $\mathbb{E}[d^{2}(\iterIndex)] \leq \gamma(\iterIndex)$
	\end{enumerate}
	it holds that
	\iftoggle{useTwoColumn}{
		\begin{align}
		\left(\mathbb{E}[\bar{d}(\numIter)] \right)^{1/2} &\leq  \frac{1}{m^{1/2}} \sum_{\iterIndex=1}^{\numIter-1} \bigg| \frac{1}{\mu(\iterIndex+1)}  - \frac{1}{\mu(\iterIndex)} \bigg| \left( \gamma(\iterIndex) \right)^{1/2}  \nonumber \\
		&\;\;\;\;\;\;\;\;\;\;\;\;  + \frac{1}{m^{1/2} \mu(1)}  \left( d_{0}^{2} \right)^{1/2} + \frac{1}{m^{1/2} \mu(\numIter)} \left( \gamma(\numIter) \right)^{1/2}  \nonumber \\
		&\;\;\;\;\;\;\;\;\;\;\;\;  + \sqrt{\frac{A}{ m \numIter}} +  \sqrt{\frac{2B}{m \numIter^{2}} \sum_{\iterIndex=1}^{\numIter} \mathbb{E}[d^{2}(\iterIndex-1)]} \nonumber
		\end{align}
	}{
	\begin{align}
	\left(\mathbb{E}[\bar{d}(\numIter)] \right)^{1/2} &\leq \frac{1}{m^{1/2}} \sum_{\iterIndex=1}^{\numIter-1} \bigg| \frac{1}{\mu(\iterIndex+1)}  - \frac{1}{\mu(\iterIndex)} \bigg| \left( \gamma(\iterIndex) \right)^{1/2} + \frac{1}{m^{1/2} \mu(1)}  \left( d_{0}^{2} \right)^{1/2} \nonumber \\
	&\;\;\;\;\;\;\;\;\;\;\;\; + \frac{1}{m^{1/2} \mu(\numIter)} \left( \gamma(\numIter) \right)^{1/2} + \sqrt{\frac{A}{ m \numIter}} +  \sqrt{\frac{2B}{m \numIter^{2}} \sum_{\iterIndex=1}^{\numIter} \mathbb{E}[d^{2}(\iterIndex-1)]} \nonumber
	\end{align}
}
with $\bar{d}(\numIter) = \| \bar{\bx}(\numIter) - \bx^{*}\|_{2}^{2}$. If $f$ has Lipschitz continuous gradients with modulus $M$, then it holds that
\iftoggle{useTwoColumn}{
\begin{align}
b(d_{0},\numIter) &= \frac{M}{2} \left( \frac{1}{m^{1/2}} \sum_{\iterIndex=1}^{\numIter-1} \bigg| \frac{1}{\mu(\iterIndex+1)}  - \frac{1}{\mu(\iterIndex)} \bigg| \left( \gamma(\iterIndex) \right)^{1/2} \right. \nonumber \\
& \qquad \qquad \left. + \frac{1}{m^{1/2} \mu(1)}  \left( d(0) \right)^{1/2}  \right. \nonumber \\
&\qquad \qquad \left. + \frac{1}{m^{1/2} \mu(\numIter)} \left( \gamma(\numIter) \right)^{1/2} + \sqrt{\frac{A}{ m \numIter}} \right. \nonumber \\
&\qquad \qquad \left. +  \sqrt{\frac{2B}{m \numIter^{2}} \sum_{\iterIndex=1}^{\numIter} \mathbb{E}[d(\iterIndex-1)]} \right)^{2} \nonumber
\end{align}
}{
\begin{align}
b(d_{0},\numIter) &= \frac{M}{2} \left( \frac{1}{m^{1/2}} \sum_{\iterIndex=1}^{\numIter-1} \bigg| \frac{1}{\mu(\iterIndex+1)}  - \frac{1}{\mu(\iterIndex)} \bigg| \left( \gamma(\iterIndex) \right)^{1/2} + \frac{1}{m^{1/2} \mu(1)}  \left( d(0) \right)^{1/2}  \right. \nonumber \\
&\;\;\;\;\;\;\;\;\;\;\;\;\;\;\;\;\;\;\; \left. + \frac{1}{m^{1/2} \mu(\numIter)} \left( \gamma(\numIter) \right)^{1/2} + \sqrt{\frac{A}{ m \numIter}} +  \sqrt{\frac{2B}{m \numIter^{2}} \sum_{\iterIndex=1}^{\numIter} \mathbb{E}[d(\iterIndex-1)]} \right)^{2} \nonumber
\end{align}
}
satisfies assumption~\ref{probState:assump4}.
\end{lem}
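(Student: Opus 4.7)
The plan is to exploit the quadratic--sample assumption to linearize the SGD update around $\bx^{*}$ and then apply summation by parts in the spirit of Bach--Moulines. Concretely, assumption~1 says that for almost every $\bz$ the map $\bx\mapsto \sgrad{\bx}{\bz}{}$ is affine, so $\sgrad{\bx(\iterIndex-1)}{\bz(\iterIndex)}{}=\sgrad{\bx^{*}}{\bz(\iterIndex)}{}+\shess{\bx^{*}}{\bz(\iterIndex)}{}\bigl(\bx(\iterIndex-1)-\bx^{*}\bigr)$ almost surely. Using the SGD recursion to replace the left-hand side by $\mu^{-1}(\iterIndex)\bigl(\bx(\iterIndex-1)-\bx(\iterIndex)\bigr)$, adding and subtracting $H^{*}=\nabla^{2}f(\bx^{*})=\mathbb{E}\shess{\bx^{*}}{\bz}{}$, and solving for $H^{*}(\bx(\iterIndex-1)-\bx^{*})$ yields
\[
H^{*}\bigl(\bx(\iterIndex-1)-\bx^{*}\bigr)=\frac{\bx(\iterIndex-1)-\bx(\iterIndex)}{\mu(\iterIndex)}-\sgrad{\bx^{*}}{\bz(\iterIndex)}{}-\bigl(\shess{\bx^{*}}{\bz(\iterIndex)}{}-H^{*}\bigr)\bigl(\bx(\iterIndex-1)-\bx^{*}\bigr).
\]

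Next I would average this identity over $\iterIndex=1,\ldots,\numIter$. The telescoping first term is handled by summation by parts, producing the boundary terms $\mu^{-1}(1)\bigl(\bx(0)-\bx^{*}\bigr)$ and $\mu^{-1}(\numIter)\bigl(\bx(\numIter)-\bx^{*}\bigr)$ and the increment sum $\sum_{\iterIndex=1}^{\numIter-1}\bigl(\mu^{-1}(\iterIndex+1)-\mu^{-1}(\iterIndex)\bigr)(\bx(\iterIndex)-\bx^{*})$, which is exactly the source of the three leading contributions in the target bound. Because $H^{*}\succeq mI$ by strong convexity, the bound $\|\bar{\bx}(\numIter)-\bx^{*}\|_{L_{2}}\le m^{-1/2}\|H^{*1/2}(\bar{\bx}(\numIter)-\bx^{*})\|_{L_{2}}$ converts the left-hand side of the averaged identity into the quantity we want. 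Applying the triangle inequality in $L_{2}$, together with the hypothesis $\mathbb{E}\|\bx(\iterIndex)-\bx^{*}\|^{2}\le\gamma(\iterIndex)$, immediately yields the first three terms of the asserted bound (the factor $\mu(1)^{-1}d_{0}$ on the initial condition and the $\mu(\numIter)^{-1}\gamma(\numIter)^{1/2}$ on the last iterate).

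The two remaining terms come from the two ``noise'' sums. For $\numIter^{-1}\sum_{\iterIndex=1}^{\numIter}\sgrad{\bx^{*}}{\bz(\iterIndex)}{}$, independence across $\iterIndex$ kills the cross terms and assumption~\ref{probState:assump5} (applied at $\bx=\bx^{*}$) gives $\mathbb{E}\|\sgrad{\bx^{*}}{\bz(\iterIndex)}{}\|^{2}\le A$, so its $L_{2}$-norm is bounded by $\sqrt{A/\numIter}$, which after dividing by $\sqrt{m}$ produces the $\sqrt{A/(m\numIter)}$ term. For the Hessian-noise sum $\numIter^{-1}\sum_{\iterIndex=1}^{\numIter}\bigl(\shess{\bx^{*}}{\bz(\iterIndex)}{}-H^{*}\bigr)(\bx(\iterIndex-1)-\bx^{*})$, the key observation is that $\bx(\iterIndex-1)$ is $\mathcal{F}_{\iterIndex-1}$-measurable while $\shess{\bx^{*}}{\bz(\iterIndex)}{}-H^{*}$ is centered and independent of $\mathcal{F}_{\iterIndex-1}$. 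Hence the summands form a martingale-difference sequence in $\iterIndex$ and the cross-expectations vanish; bounding the conditional second moment of $\bigl(\shess{\bx^{*}}{\bz(\iterIndex)}{}-H^{*}\bigr)(\bx(\iterIndex-1)-\bx^{*})$ by $2B\,\mathbb{E}\|\bx(\iterIndex-1)-\bx^{*}\|^{2}$ (which can be read off from assumption~\ref{probState:assump5} by expanding $\sgrad{\bx}{\bz}{}-\sgrad{\bx^{*}}{\bz}{}$ via the quadratic identity) delivers the final $\sqrt{2B\,\numIter^{-2}\sum_{\iterIndex=1}^{\numIter}\mathbb{E}\|\bx(\iterIndex-1)-\bx^{*}\|^{2}/m}$ term. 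Combining all five terms via the triangle inequality in $L_{2}$ proves the stated $(\mathbb{E}[\bar{d}(\numIter)])^{1/2}$ bound, and the $b(d_{0},\numIter)$ form then follows from $f(\bar{\bx}(\numIter))-f(\bx^{*})\le\tfrac{M}{2}\bar{d}(\numIter)$ by the Lipschitz-gradient descent lemma.

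The main obstacle I anticipate is bookkeeping in the summation-by-parts step and verifying that the Hessian-noise sum really behaves like a sum of independent terms; because $\bx(\iterIndex-1)$ is correlated across $\iterIndex$, a naive bound loses a factor of $\numIter$, and one must argue via the martingale (cross-term) cancellation to recover the $\numIter^{-2}$ scaling. The step-size condition $\alpha\ge 1/2$ is used only at the very end to keep $|\mu^{-1}(\iterIndex+1)-\mu^{-1}(\iterIndex)|$ summable and to ensure $\gamma(\iterIndex)$ (supplied by \cref{bBounds:dBound}) decays fast enough for the bound to be meaningful; this does not enter the derivation of the inequality itself.
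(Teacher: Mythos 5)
Your plan is correct and coincides with the paper's own proof, which consists entirely of a citation to \cite{BachMoulines2011}: the argument there is exactly your reconstruction---linearize the stochastic gradient around $\bx^{*}$ via the quadratic identity, replace it by $\mu^{-1}(\iterIndex)(\bx(\iterIndex-1)-\bx(\iterIndex))$, apply Abel summation to extract the boundary and step-size-increment terms, use $H^{*}\succeq m I$ with Minkowski's inequality in $L_{2}$, and exploit the martingale-difference structure of the two noise sums to obtain the $\sqrt{A/(m\numIter)}$ and $B$-dependent terms. No gap; your reading of where the $\alpha\ge 1/2$ condition enters (only in making the resulting bound decay, not in deriving the inequality) also matches the paper's remark following the lemma.
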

\begin{proof}
	See \cite{BachMoulines2011} for the proof
\end{proof}
This decays at rate $\mathcal{O}\left(\frac{1}{\numIter}\right)$ as long as $\mu(\iterIndex) = C\iterIndex^{-\alpha}$ with $\frac{1}{2} \leq \alpha \leq 1$. To get the bounds $\gamma(\iterIndex)$, we can again apply \cref{bBounds:dBound}.

\section{Parameter Estimation}
\label{parameterEstimation}

We may need to estimate parameters of the functions $\{f_{n}\}$ such as the strong convexity parameter $m$ to compute the bound $b(d_{0},\numIter)$ from assumption~\ref{probState:assump4}. In this section, we assume that the bound $b(d_{0},\numIter,\psi)$ is parameterized by $\psi$, which depends on properties of the functions $f_{n}(\bx)$. In most cases, we have parameters
\[
\psi = \left[ \begin{array}{cccc}
1/m & A & B
\end{array} \right]^{\top}
\]
where $m$ is the parameter of strong convexity, and the pair $(A,B)$ controls gradient growth as in assumption~\ref{probState:assump5}, i.e.,
\[
\mathbb{E}\| \sgrad{\bx}{\bz}{} \|_{2}^{2} \leq A +  B \| \bx-  \bx^{*} \|_{2}^{2}.
\]
We parameterize using $1/m$, since smaller $m$ increase the bound $b(d_{0},\numIter,\psi)$. Our goal is to produce an estimate $\hat{\psi}$ such that $\hat{\psi} \geq \psi^{*}$ with $\psi^{*}$ the true parameters. We present several general methods for estimating these parameters..

Similar to estimating $\rho$, we produce one time instant estimates $\tilde{m}_{i}$, $\tilde{A}_{i}$, and $\tilde{B}_{i}$ at time $i$ and combine them by averaging to yield
\begin{enumerate}
	\item $\hat{m}_{n} = \frac{1}{n} \sum_{i=1}^{n} \tilde{m}_{i}$
	\item $\hat{A}_{n} = \frac{1}{n} \sum_{i=1}^{n} \tilde{A}_{i}$
	\item $\hat{B}_{n} = \frac{1}{n} \sum_{i=1}^{n} \tilde{B}_{i}$.
\end{enumerate}

We make the following assumptions for our analysis:
\begin{enumerate}[label=\textbf{D.\arabic*}]
	\item \label{probState:assumpD1} The parameters $\psi \in \mathcal{P}$ with $\mathcal{P}$ compact and there exists a true set of parameters $\psi^{*}$.
	
	\item \label{probState:assumpD2}  The bound $b(d_{0},\numIter,\tilde{\psi})$ is non-decreasing in $\psi$, i.e.,
	\[
	\psi \leq \tilde{\psi} \;\;\Rightarrow\;\; b(d_{0},\numIter,\psi) \leq b(d_{0},\numIter,\tilde{\psi}).
	\]
	
	\item \label{probState:assumpD3} $\nabla f_{n}(\bx_{n})$ has Lipschitz continuous gradients with modulus $M$.
	
	\item \label{probState:assumpD4} $f_{n}(\bx)$ is twice differentiable and there exist stochastic second derivatives with respect to $\bx$, $\shess{\bx}{\bz}{}$, such  that
	\[
	\mathbb{E}_{\bz_{n} \sim p_{n}}\left[ \shess{\bx}{\bz_{n}}{n} \;|\; \bx  \right] = \nabla_{\bx\bx}^{2} f_{n}(\bx).
	\]
	\item \label{probState:assumpD5} The space $\mathcal{Z}$ is compact and there exists a constant $G$ such that $\| \sgrad{\bx}{\bz}{n}\|_{2} \leq G \;\;\;\; \forall \bx \; \forall \bz \; \forall n$.
	
	\item \label{probState:assumpD6} We have access to stochastic functions $\sfunc{\bx}{\bz}{n}$ such that
	\[
	\mathbb{E}[ \sfunc{\bx}{\bz}{n} \;|\; \bx ] = f_{n}(\bx).
	\]
\end{enumerate}

\subsection{Estimating the Strong Convexity Parameter}

We seek one step estimates $\tilde{m}_{i}$ of the parameter of the strong convexity such that
\[
\mathbb{E}[\tilde{m}_{i} \;|\; \mathcal{F}_{i-1}  ] \leq m.
\]

For any two points $\bx$ and $\tilde{\bx}$, by strong convexity we have
\[
f_{i}(\tilde{\bx}) \geq f_{i}(\bx) + \inprod{\nabla f_{i}(\bx)}{\tilde{\bx} -\bx} + \frac{1}{2} m \|\tilde{\bx} - \bx\|_{2}^{2} \;\;\;\;\; \forall \bx,\tilde{\bx} \in \xSp
\]
which implies that
\[
m \leq \frac{f_{i}(\tilde{\bx}) - f_{i}(\bx)-\inprod{\nabla f_{i}(\bx)}{\tilde{\bx} - \bx}}{\frac{1}{2}\|\tilde{\bx} - \bx\|_{2}^{2}}.
\]
We suppose that for all $n$
\[
m = \min_{\bx,\tilde{\bx} \in \xSp} \frac{f_{i}(\tilde{\bx}) - f_{i}(\bx)-\inprod{\nabla f_{i}(\bx)}{\tilde{\bx} - \bx}}{\frac{1}{2}\|\tilde{\bx} - \bx\|_{2}^{2}}.
\]
This is not restrictive since any $m > 0$ that satisfies
\[
m \leq \min_{\bx,\tilde{\bx} \in \xSp} \frac{f_{i}(\tilde{\bx}) - f_{i}(\bx)-\inprod{\nabla f_{i}(\bx)}{\tilde{\bx} - \bx}}{\frac{1}{2}\|\tilde{\bx} - \bx\|_{2}^{2}}
\]
can be taken as a parameter of strong convexity for the class of functions $f_{i}(\bx)$. We estimate this quantity for fixed $\bx$ and $\tilde{\bx}$ using the plug in approximation in \cref{paramEst:ratioDef}.
\iftoggle{useTwoColumn}{
\begin{figure*}[!t]
\normalsize
\begin{equation}
\label{paramEst:ratioDef}
r(\bx,\tilde{\bx}) \triangleq \frac{\frac{1}{\numIter_{i}} \sum_{\iterIndex=1}^{\numIter_{i}} \sfunc{\tilde{\bx}}{\bz_{i}(\iterIndex)}{i} - \frac{1}{\numIter_{i}} \sum_{\iterIndex=1}^{\numIter_{i}} \sfunc{\bx}{\bz_{i}(\iterIndex)}{i} - \inprod{\frac{1}{\numIter_{i}} \sum_{\iterIndex=1}^{\numIter_{i}}\sgrad{\bx}{\bz_{i}(\iterIndex)}{i}}{\tilde{\bx} - \bx}}{\frac{1}{2}\|\tilde{\bx} - \bx\|_{2}^{2}}
\end{equation}
\hrulefill
\vspace*{4pt}
\end{figure*}
}{
\begin{equation}
\label{paramEst:ratioDef}
r(\bx,\tilde{\bx}) \triangleq \frac{\frac{1}{\numIter_{i}} \sum_{\iterIndex=1}^{\numIter_{i}} \sfunc{\tilde{\bx}}{\bz_{i}(\iterIndex)}{i} - \frac{1}{\numIter_{i}} \sum_{\iterIndex=1}^{\numIter_{i}} \sfunc{\bx}{\bz_{i}(\iterIndex)}{i} - \inprod{\frac{1}{\numIter_{i}} \sum_{\iterIndex=1}^{\numIter_{i}}\sgrad{\bx}{\bz_{i}(\iterIndex)}{i}}{\tilde{\bx} - \bx}}{\frac{1}{2}\|\tilde{\bx} - \bx\|_{2}^{2}}.
\end{equation}
}
Then consider the following estimate of $m$:
\begin{equation}
\label{paramEst:strCvxRatioExact}
\tilde{m}_{i} \triangleq \min_{\bx,\tilde{\bx} \in \xSp} r(\bx,\tilde{\bx}).
\end{equation}
This estimate satisfies
\begin{eqnarray}
\mathbb{E}[\tilde{m}_{i} \;|\; \mathcal{F}_{i-1}] &=& \mathbb{E}\left[ \min_{\bx,\tilde{\bx} \in \xSp} r(\bx,\tilde{\bx}) \;|\; \mathcal{F}_{i-1}  \right] \nonumber \\
&\leq& \min_{\bx,\tilde{\bx} \in \xSp} \mathbb{E}\left[r(\bx,\tilde{\bx})  \right] \nonumber \\
&=& \min_{\bx,\tilde{\bx} \in \xSp} \frac{f_{i}(\tilde{\bx}) - f_{i}(\bx)-\inprod{\nabla f_{i}(\bx)}{\tilde{\bx} - \bx}}{\frac{1}{2}\|\tilde{\bx} - \bx\|_{2}^{2}} \nonumber \\
&=& m. \nonumber
\end{eqnarray}

Since computing the minimum here is difficult and is generally a non-convex problem, we can instead look at an approximate method. Suppose that we have $N$ points\\ $\bx(1),\ldots,\bx(N)$. Then for any two distinct points $\bx_{i}$ and $\bx_{j}$, we have
\[
m \leq \frac{f_{i}(\bx(i)) - f_{i}(\bx(j)) - \inprod{\nabla f_{i}(\bx(j))}{\bx(i) - \bx(j)}}{\frac{1}{2} \| \bx(i) - \bx(j) \|_{2}^{2}}.
\]
This suggests the estimate
\begin{equation}
\label{paramEst:strCvxOne}
\hat{m}_{i} \triangleq \min_{i \neq j} r(\bx(i),\bx(j))
\end{equation}
for the strong convexity parameter. Then we have
\begin{align}
\mathbb{E}&[\hat{m}_{i} \;|\; \mathcal{F}_{i-1}] \nonumber \\
&= \mathbb{E}\left[ \min_{i \neq j} r(\bx(i),\bx(j)) \;\Bigg|\; \mathcal{F}_{i-1}  \right] \nonumber \\
&\leq \min_{i \neq j} \mathbb{E}\left[ r(\bx(i),\bx(j)) \;\Bigg|\; \mathcal{F}_{i-1} \right] \nonumber \\
&\leq \min_{i \neq j} \frac{f_{i}(\bx(i)) - f_{i}(\bx(j)) - \inprod{\nabla f_{i}(\bx(j))}{\bx(i) - \bx(j)}}{\frac{1}{2} \| \bx(i) - \bx(j) \|_{2}^{2}}. \nonumber
\end{align}
It is difficult to compare this estimate to $m$ exactly. All we can say is that
\[
m \leq \min_{i \neq j} \frac{f_{i}(\bx(i)) - f_{i}(\bx(j)) - \inprod{\nabla f_{i}(\bx(j))}{\bx(i) - \bx(j)}}{\frac{1}{2} \| \bx(i) - \bx(j) \|_{2}^{2}}
\]
as well.

\subsection{Estimating Gradient Parameters}

We seek $(A,B)$ such that
\[
\mathbb{E}\| \sgrad{\bx}{\bz}{} \|_{2}^{2} \leq A + B \| \bx - \bx^{*}\|_{2}^{2}.
\]
Suppose that our functions have Lipschitz continuous gradients with modulus $M$, and we construct estimates of the modulus $\hat{M}_{i}$ analogous to \cref{paramEst:strCvxRatioExact} or \cref{paramEst:strCvxOne} by replacing the min with a max. Suppose that we select $N$ points $\bx(1),\ldots,\bx(N) \in \xSp$. We want to find $A$ and $B$ such that
\[
\mathbb{E}\| \sgrad{\bx(j)}{\bz}{} \|_{2}^{2} \leq A + B \| \bx(j) - \bx^{*}\|_{2}^{2}.
\]
By the Lipschitz gradient assumption, we have
\[
\| \bx - \bx^{*}\|_{2} \geq \frac{1}{M} \| \nabla f(\bx) \|_{2}.
\]
Therefore, the following implication holds
\iftoggle{useTwoColumn}{
	\begin{align}
	\mathbb{E}&\| \sgrad{\bx(j)}{\bz}{} \|_{2}^{2} \leq A + \frac{B}{M^{2}} \| \nabla f(\bx(j)) \|_{2}^{2} \nonumber \\
	& \qquad \qquad \Rightarrow \;\; \mathbb{E}\| \sgrad{\bx(j)}{\bz}{} \|_{2}^{2} \leq A + B \| \bx(j) - \bx^{*}\|_{2}^{2}.
	\end{align}
}{
\[
\mathbb{E}\| \sgrad{\bx(j)}{\bz}{} \|_{2}^{2} \leq A + \frac{B}{M^{2}} \| \nabla f(\bx(j)) \|_{2}^{2} \;\; \Rightarrow \;\; \mathbb{E}\| \sgrad{\bx(j)}{\bz}{} \|_{2}^{2} \leq A + B \| \bx(j) - \bx^{*}\|_{2}^{2}.
\]
}
We look for $(A,B)$ such that
\[
\mathbb{E}\| \sgrad{\bx(j)}{\bz}{} \|_{2}^{2} \leq A + \frac{B}{M^{2}} \| \nabla f(\bx()) \|_{2}^{2} \;\;\;\; j=1,\ldots,N.
\]
Define
\[
s_{i}(j) \triangleq \frac{1}{\numIter_{i}} \sum_{\iterIndex=1}^{\numIter_{i}} \| \sgrad{\bx(j)}{\bz_{i}(\iterIndex)}{i} \|_{2}^{2}
\]
and
\iftoggle{useTwoColumn}{
	$d_{i}(j)$ equal to
	\begin{align}
	g_{i}(j) - \frac{1}{\numIter_{i}-1} \sum_{\iterIndex=1}^{\numIter_{i}} \Bigg\| \sgrad{\bx(j)}{\bz_{i}(\iterIndex)}{i} - \frac{1}{\numIter_{i}} \sum_{p=1}^{\numIter_{i}} \sgrad{\bx(p)}{\bz_{i}(p)}{i} \Bigg\|_{2}^{2}. \nonumber
	\end{align}
}{
\[
d_{i}(j) \triangleq g_{i}(j) - \frac{1}{\numIter_{i}-1} \sum_{\iterIndex=1}^{\numIter_{i}} \Bigg\| \sgrad{\bx(j)}{\bz_{i}(\iterIndex)}{i} - \frac{1}{\numIter_{i}} \sum_{p=1}^{\numIter_{i}} \sgrad{\bx(p)}{\bz_{i}(p)}{i} \Bigg\|_{2}^{2}.
\]
}
We want to find $(A,B)$ such that
\[
s_{i}(j) \leq A + \frac{B}{(\hat{M}_{i-1} + t_{i-1})^{2}} d_{i}(j) \;\;\;\; j=1,\ldots,N.
\]

Suppose that we are given a function $\phi(A,B)$ that controls the size of $(A,B)$. For example, we may have $\phi(A,B) = \frac{1}{2}A^{2} + \frac{1}{2}B^{2}$ or $\phi(A,B) = \lambda A^{2} + (1-\lambda)B^{2}$ with $0 \leq \lambda \leq 1$. We solve
\begin{equation}
\label{paramEst:abApproxEst}
\begin{aligned}
& \underset{\tilde{A}_{i},\tilde{B}_{i}}{\text{minimize}}
& & \phi(\tilde{A}_{i},\tilde{B}_{i})  \\
& \text{subject to}
& & s_{i}(j) \leq \tilde{A}_{i} + \frac{\tilde{B}_{i}}{(\hat{M}_{i-1} + t_{i-1})^{2}} d_{i}(j), \;\;\; j = 1, \ldots, N \\
& & & \tilde{A}_{i} \geq 0 \;,\; \tilde{B}_{i} \geq 0
\end{aligned}
\end{equation}
to generate approximate $(\tilde{A}_{i},\tilde{B}_{i})$.

\subsection{Combining One Step Estimates}
One issue in parameter estimation is that there may be some dependencies among the various estimates which need to be accounted for. For example, the estimates for $(A,B)$ in \cref{paramEst:abApproxEst} depend on estimates for the Lipschitz modulus $M$. We show that this does not impact our estimation process using \cref{paramEst:firstEst} and \cref{paramEst:secondEst}. First, we present a result showing that if we plug in the true parameters that our estimates work.
\begin{lem}
	\label{paramEst:firstEst}
	Suppose that we estimate $\phi^{*}$ by averaging the estimates $\phi_{i}(\pi^{*})$ where $\pi^{*}$ are the true parameters on which the estimate $\phi_{i}$ depends and the following conditions hold:
	\begin{enumerate}
		\item $|\phi_{i}(\pi^{*})| \leq C$
		\item $\mathbb{E}[\phi_{i}(\pi^{*}) \;|\; \mathcal{F}_{i-1}] \geq \phi^{*}$
		\item $\sum_{n=1}^{\infty} \exp\left\{ -\frac{2n t_{n}^{2}}{C^{2}} \right\} < +\infty$
	\end{enumerate}
	Then for all $n$ large enough, it holds that
	\[
	\frac{1}{n} \sum_{i=1}^{n} \phi_{i}(\pi^{*}) + t_{n} \geq \phi^{*}
	\]
	almost surely.
\end{lem}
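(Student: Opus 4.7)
The proof will be a straightforward application of a sub-Gaussian concentration inequality for martingale-type sequences followed by Borel-Cantelli, essentially paralleling the role that Lemma~\ref{subgauss:subgaussDepLem} played in the proofs of Theorems~\ref{rho_conc_eq}--\ref{rho_conc_ineq_sq}. Define the centered sequence
\[
V_i \triangleq \phi_i(\pi^*) - \mathbb{E}[\phi_i(\pi^*)\,|\,\mathcal{F}_{i-1}]
\qquad i=1,\ldots,n.
\]
Since $|\phi_i(\pi^*)|\leq C$ by assumption, both $\phi_i(\pi^*)$ and the conditional expectation lie in a bounded interval, so $V_i$ lies in a deterministic interval of length at most $2C$ with $\mathbb{E}[V_i\,|\,\mathcal{F}_{i-1}]=0$. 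By the conditional Hoeffding Lemma~\ref{estRho:condHoeffdingLemma}, each $V_i$ is conditionally sub-Gaussian with a parameter $\sigma_i^2$ proportional to $C^2$, and $V_i$ is $\mathcal{F}_i$-measurable, so the hypotheses of Lemma~\ref{subgauss:subgaussDepLem} are satisfied.

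Next, I would apply Lemma~\ref{subgauss:subgaussDepLem} with uniform weights $a_i = 1/n$ to the one-sided tail. This yields
\[
\mathbb{P}\!\left\{\frac{1}{n}\sum_{i=1}^n \phi_i(\pi^*) < \frac{1}{n}\sum_{i=1}^n \mathbb{E}[\phi_i(\pi^*)\,|\,\mathcal{F}_{i-1}] - t_n\right\} \leq \exp\!\left\{-\frac{2 n t_n^2}{C^2}\right\},
\]
where the exact constant follows from the Hoeffding parameter $\sigma_i^2$ combined with $\nu = \sum_i \sigma_i^2 a_i^2 = \mathcal{O}(C^2/n)$. Using the hypothesis $\mathbb{E}[\phi_i(\pi^*)\,|\,\mathcal{F}_{i-1}] \geq \phi^*$, the bad event $\{\tfrac{1}{n}\sum_{i=1}^n \phi_i(\pi^*) + t_n < \phi^*\}$ is contained in the event on the left-hand side above, so it inherits the same exponential bound.

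The final step is Borel--Cantelli. By the third hypothesis, $\sum_{n=1}^\infty \exp\{-2nt_n^2/C^2\} < +\infty$, so
\[
\sum_{n=1}^\infty \mathbb{P}\!\left\{ \frac{1}{n}\sum_{i=1}^n \phi_i(\pi^*) + t_n < \phi^* \right\} < +\infty,
\]
and consequently only finitely many of these events occur almost surely, proving that $\frac{1}{n}\sum_{i=1}^n \phi_i(\pi^*) + t_n \geq \phi^*$ for all $n$ sufficiently large almost surely.

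There is no real obstacle here: the argument is a template reuse of the minimizer-change estimation proofs (notably the tail-plus-Borel--Cantelli structure in the proof of Theorem~\ref{rho_conc_eq}). The only bookkeeping point is that one must be careful to write the concentration bound on the side $\{\sum V_i < -nt_n\}$ rather than the other tail, since the hypothesis lower-bounds the conditional means, and one must verify that the constant emerging from conditional Hoeffding matches the $2/C^2$ exponent stated in the summability condition so that Borel--Cantelli can be invoked directly without any additional slack.
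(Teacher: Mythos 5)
Your proposal is correct and follows essentially the same route as the paper's own proof: center $\phi_{i}(\pi^{*})$ at its conditional mean, apply the conditional Hoeffding lemma (\cref{estRho:condHoeffdingLemma}) together with \cref{subgauss:subgaussDepLem} using uniform weights $a_{i} = 1/n$, use the hypothesis $\mathbb{E}[\phi_{i}(\pi^{*}) \,|\, \mathcal{F}_{i-1}] \geq \phi^{*}$ to contain the bad event in the tail event, and conclude with Borel--Cantelli via the summability hypothesis. The constant-matching caveat you flag is real but is equally glossed over in the paper, whose proof takes $\sigma_{i}^{2} = C^{2}/4$ (corresponding to a range of $C$, as when $0 \leq \phi_{i}(\pi^{*}) \leq C$) rather than the $\sigma_{i}^{2} = C^{2}$ that the two-sided bound $|\phi_{i}(\pi^{*})| \leq C$ strictly yields, so your argument matches the paper's at the same level of rigor.
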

\begin{proof}
	Since $|\phi_{i}(\pi^{*})| \leq C$, by applying \cref{estRho:condHoeffdingLemma} it holds that
	\[
	\mathbb{E}\left[ e^{ s\left( \phi_{i}(\pi^{*}) - \mathbb{E}[\phi_{i}(\pi^{*}) \;|\; \mathcal{F}_{i-1}]  \right) } \;|\; \mathcal{F}_{i-1}  \right] \leq \exp\left\{ \frac{1}{2} \frac{C^2}{4} s^{2} \right\}.
	\]
	Then by \cref{subgauss:subgaussDepLem}, it holds that
	\iftoggle{useTwoColumn}{
		\begin{align}
		\mathbb{P}&\left\{ \frac{1}{n} \sum_{i=1}^{n} \phi_{i}(\pi^{*}) < \phi^{*} - t_{n} \right\} \nonumber \\
		&\quad = \mathbb{P}\left\{ \frac{1}{n} \sum_{i=1}^{n} \phi_{i}(\pi^{*}) < \frac{1}{n} \sum_{i=1}^{n} \mathbb{E}\left[ \phi_{i}(\pi^{*}) \;|\; \mathcal{F}_{i-1} \right] - t_{n} \right\} \nonumber \\
		&\quad \leq \exp\left\{ -\frac{2n t_{n}^{2}}{C^{2}} \right\}. \nonumber
		\end{align}
	}{
	\begin{eqnarray}
	\mathbb{P}\left\{ \frac{1}{n} \sum_{i=1}^{n} \phi_{i}(\pi^{*}) < \phi^{*} - t_{n} \right\} &=& \mathbb{P}\left\{ \frac{1}{n} \sum_{i=1}^{n} \phi_{i}(\pi^{*}) < \frac{1}{n} \sum_{i=1}^{n} \mathbb{E}\left[ \phi_{i}(\pi^{*}) \;|\; \mathcal{F}_{i-1} \right] - t_{n} \right\} \nonumber \\
	&\leq& \exp\left\{ -\frac{2n t_{n}^{2}}{C^{2}} \right\}. \nonumber
	\end{eqnarray}
}
Since it holds that
\begin{eqnarray}
\sum_{n=1}^{\infty} \mathbb{P}\left\{ \frac{1}{n} \sum_{i=1}^{n} \phi_{i}(\pi^{*}) < \phi^{*} - t_{n} \right\} &\leq& \sum_{n=1}^{\infty} \exp\left\{ -\frac{2n t_{n}^{2}}{C^{2}} \right\}  < +\infty \nonumber
\end{eqnarray}
by the Borel-Cantelli lemma, it follows that for all $n$ large enough
\[
\frac{1}{n} \sum_{i=1}^{n} \phi_{i}(\pi^{*}) + t_{n} \geq \phi^{*}.
\]
\end{proof}

Since our estimates of $m$ do not depend on any parameters $\pi$, this lemma shows that the averaged estimate eventually lower bounds $m$. Similar reasoning holds for the Lipschitz modulus $M$. \Cref{paramEst:firstEst} also shows that estimates of $(A,B)$ upper bound the true quantities provided that the true value of $m$ and $M$ are plugged in. We bootstrap from this result to show that the estimates of $A$ and $B$ upper bound the exact quantities using \cref{paramEst:secondEst}. Before proceeding, note that random variables $X_{n}$ are $o_{\mathbb{P}}(1)$ if
\[
\lim_{n \to \infty} \mathbb{P}\left\{ |X_{n}| \geq t \right\} = 0 \;\;\;\; \forall t > 0.
\]

\begin{lem}
	\label{paramEst:secondEst}
	Suppose that we estimate $\phi^{*}$ by averaging the estimates $\phi_{i}(\pi_{i})$ where $\pi_{i}$ are the estimates of the parameters on which the estimate $\phi_{i}$ depends and the following hold:
	\begin{enumerate}
		\item $|\phi_{i}(\pi)| \leq C$
		\item For all $n$ large enough $\pi_{n} \geq \pi^{*}$ almost surely
		\item $\pi \leq \tilde{\pi} \;\;\Rightarrow\;\; \phi_{i}(\pi) \leq \phi_{i}(\tilde{\pi})$
		\item For appropriate sequences $t_{n}$, $\frac{1}{n} \sum_{i=1}^{n} \phi_{i}(\pi^{*}) + t_{n} \geq \phi^{*}$
	\end{enumerate}
	Then for all $n$ large enough, it holds that
	\[
	\frac{1}{n} \sum_{i=1}^{n} \phi_{i}(\pi_{i}) + t_{n} \geq \phi^{*} + o_{\mathbb{P}}(1)
	\]
	almost surely.
\end{lem}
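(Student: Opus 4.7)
The plan is to bootstrap from \cref{paramEst:firstEst} by using the monotonicity of $\phi_i$ in $\pi$ together with the eventual almost-sure ordering $\pi_i \geq \pi^*$. Let $N$ be the random index such that $\pi_i \geq \pi^*$ for all $i \geq N$; by hypothesis $N$ is finite almost surely, so $\mathbb{P}\{N > n\} \to 0$ as $n \to \infty$. For $i \geq N$, monotonicity (hypothesis 3) gives $\phi_i(\pi_i) \geq \phi_i(\pi^*)$, while for $i < N$ we only have the crude bound $|\phi_i(\pi_i) - \phi_i(\pi^*)| \leq 2C$ from hypothesis 1.

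The main step is to decompose
\[
\frac{1}{n}\sum_{i=1}^{n}\phi_i(\pi_i) = \frac{1}{n}\sum_{i=1}^{n}\phi_i(\pi^{*}) + \frac{1}{n}\sum_{i=1}^{n}\bigl(\phi_i(\pi_i)-\phi_i(\pi^{*})\bigr),
\]
and to control the second sum by splitting at $N$:
\[
\frac{1}{n}\sum_{i=1}^{n}\bigl(\phi_i(\pi_i)-\phi_i(\pi^{*})\bigr) = \frac{1}{n}\sum_{i=1}^{N-1}\bigl(\phi_i(\pi_i)-\phi_i(\pi^{*})\bigr) + \frac{1}{n}\sum_{i=N}^{n}\bigl(\phi_i(\pi_i)-\phi_i(\pi^{*})\bigr).
\]
On the event $\{n\geq N\}$ the tail sum is nonnegative by monotonicity, and the initial sum is bounded in absolute value by $2C(N-1)/n$. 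Combining with hypothesis 4 gives, for $n \geq N$,
\[
\frac{1}{n}\sum_{i=1}^{n}\phi_i(\pi_i) + t_n \;\geq\; \phi^{*} - \frac{2C(N-1)}{n}.
\]

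To finish, I need to argue that $R_n := 2C(N-1)/n$ is $o_{\mathbb{P}}(1)$: since $N$ is finite a.s., $N/n \to 0$ almost surely, hence in probability, which gives $R_n = o_{\mathbb{P}}(1)$. Thus for all $n$ large enough (namely $n \geq N$), the claimed inequality holds almost surely. The only subtlety is the dual nature of the conclusion — the statement holds almost surely for $n \geq N$, with the $o_{\mathbb{P}}(1)$ term absorbing the finite random transient before $\pi_i$ settles above $\pi^{*}$. This is the same device used in the proof of \cref{withRhoUnknown:meanGapRhoKnownLemma} (the truncation at $\tilde{N}$), so no new technical obstacle arises; the argument is essentially the bounded-transient-plus-averaging trick.
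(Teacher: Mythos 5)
Your proposal is correct and takes essentially the same route as the paper's own proof: both split the average at the finite almost-sure index $\tilde{N}$ past which $\pi_{i} \geq \pi^{*}$, use the monotonicity hypothesis on the tail sum, bound the transient $\frac{1}{n}\sum_{i=1}^{\tilde{N}-1}\left(\phi_{i}(\pi_{i})-\phi_{i}(\pi^{*})\right)$ by boundedness of $\phi_{i}$ so that it is $o_{\mathbb{P}}(1)$, and then invoke hypothesis 4. Your explicit bound $2C(N-1)/n$ and the justification that it is $o_{\mathbb{P}}(1)$ via $N/n \to 0$ almost surely is, if anything, slightly more detailed than the paper's one-line appeal to boundedness.
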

\begin{proof}
	There exists a finite almost surely random variable $\tilde{N}$ such that
	\[
	n \geq \tilde{N} \;\;\Rightarrow\;\; \pi_{i} \geq \pi^{*}.
	\]
	It holds that
	\begin{eqnarray}
	\frac{1}{n} \sum_{i=1}^{n} \phi_{i}(\pi_{i}) &=& \frac{1}{n} \sum_{i=1}^{\tilde{N}-1} \phi_{i}(\pi_{i}) + \frac{1}{n} \sum_{i = \tilde{N}}^{n} \phi_{i}(\pi_{i}) \nonumber \\
	&\geq& \frac{1}{n} \sum_{i=1}^{\tilde{N}-1} \phi_{i}(\pi_{i}) + \frac{1}{n} \sum_{i = \tilde{N}}^{n} \phi_{i}(\pi^{*}) \nonumber \\
	&=& \frac{1}{n} \sum_{i=1}^{\tilde{N}-1} \left( \phi_{i}(\pi_{i}) - \phi_{i}(\pi^{*}) \right) + \frac{1}{n} \sum_{i = 1}^{n} \phi_{i}(\pi^{*}). \nonumber
	\end{eqnarray}
	By the boundedness of $\phi(\pi)$, this implies that
	\begin{eqnarray}
	\frac{1}{n} \sum_{i=1}^{n} \phi_{i}(\pi_{i}) + t_{n} &=& \left( \frac{1}{n} \sum_{i = 1}^{n} \phi_{i}(\pi^{*}) + t_{n} \right) + o_{\mathbb{P}}(1) \nonumber \\
	&\geq& \phi^{*} +  o_{\mathbb{P}}(1). \nonumber
	\end{eqnarray}
\end{proof}
This result proves that estimates for $(A,B)$ work as a result of the estimates for $m$ and $M$ working.

\end{document}